\documentclass[10pt]{amsart}
\usepackage[left=1in,right=1in,top=1in,bottom=1in]{geometry}
\usepackage[utf8]{inputenc}
\usepackage{setspace}
\usepackage{amsfonts}
\usepackage{amsthm}
\usepackage{thmtools}
\usepackage{amsmath}
\usepackage{mathrsfs}
\usepackage{mathtools}
\usepackage{amssymb}
\usepackage{bbm}
\usepackage[T1]{fontenc}
\usepackage{esint}
\usepackage{xcolor}
\usepackage{hyperref}
\usepackage[noabbrev,capitalize,nameinlink]{cleveref}

\newcommand{\R}{\mathbb{R}}
\newcommand{\Pbb}{\mathbb{P}}
\newcommand{\E}{\mathbb{E}}

\newcommand{\Prob}{\mathrm{Prob}}

\newcommand{\C}{\mathbb{C}}
\newcommand{\N}{\mathbb{N}}

\newcommand{\pair}[2]{\left\langle #1, #2 \right\rangle}

\newcommand{\id}{\mathrm{id}}

\newcommand{\ind}{\mathbbm{1}}

\newcommand{\gr}{\mathrm{gr}}
\newcommand{\Perm}{\mathrm{Perm}}
\newcommand{\End}{\mathrm{End}}
\newcommand{\Aut}{\mathrm{Aut}}

\newtheorem{theorem}{Theorem}[section]

\newtheorem{remark}{Remark}[section]
\newtheorem{prop}[theorem]{Proposition}

\newtheorem{fact}{Fact}
\newtheorem{cor}[theorem]{Corollary}

\theoremstyle{definition}
\newtheorem{definition}{Definition}[section]

\crefname{theorem}{Theorem}{Theorems}
\crefname{lemma}{Lemma}{Lemmas}
\crefname{remark}{Remark}{Remarks}
\crefname{prop}{Proposition}{Propositions}
\crefname{conjecture}{Conjecture}{Conjectures}
\crefname{fact}{Fact}{Facts}
\crefname{cor}{Corollary}{Corollaries}

\title{On the Composition of Permutons}
\author{Christian Gorski}
\address{University of Washington}
\email{cgorski1@uw.edu}

\date{October 2026}

\begin{document}
\begin{abstract}
    %We construct a natural associative binary operation on permutons analogous to composition of permutations
    %and prove both positive and negative continuity results.
    %In particular, composition is not jointly continuous in the usual topology on permutons (the weak topology),
    %but we introduce another topology, which we call the strong topology, with respect to which it is continuous.
    %We also show that a sequence of permutons coming from a sequence of permutations
    %on a set of diverging size converges in the strong topology if and only if
    %it converges in the weak topology, and the transpose of the limit permuton
    %is the pushforward of Lebesgue measure on $[0,1]$ along the map $x \mapsto (x, F(x))$
    %for some Lebesgue-measure-preserving map $F: [0,1] \to [0,1]$.
    Permutons can be thought of as ``continuum limits'' of permutations of diverging size, which 
    encode information about the number of occurrences of any given finite subpattern.
    We construct a natural composition operation on permutons, analogous
    to composition of permutations, and explore its properties via a correspondence
    between permutons, Markov kernels, and Markov operators.
    
    We show that no extension of permutation composition to permutons can be (jointly) continuous with
    respect to the usual ``weak'' topology on permutons,
    but we construct a ``strong'' topology with respect to which composition is jointly continuous.
    Moreover, we characterize strong convergence of sequences of permutations of diverging size:
    $\sigma_N \to P$ strongly if and only if $\sigma_N \to P$ weakly and $P$
    is the transpose of the graph of a measure-preserving function. The key insight behind these results is to apply functional analysis to certain operators associated to permutons; with this shift in perspective, our results have elegant and relatively ``soft'' proofs.

    As an application, we describe the permuton limit of a product of finitely many independent uniformly sampled separable 
    permutations and give a formula for the expectation of this limit.
    We also show that in this case, as the number of compositions goes to infinity, 
    the aforementioned expectation tends
    to the uniform permuton.
    \end{abstract}

\maketitle

\section{Introduction}
A \emph{permuton} is a probability measure on $[0,1]^2$ with uniform marginals; we will denote
the set of all permutons by $\Perm$.
We can encode any \emph{permutation} $\sigma \in S_N$
as a permuton $P_\sigma \in \Perm$ by taking the following uniform measure:\footnote{As usual if $S \subseteq \R^d$,
then we define the uniform measure $\mathrm{Unif}(S)$
by $\mathrm{Unif}(S)(A) = \mathcal{L}^d(A \cap S) / \mathcal{L}^d(S)$,
where $\mathcal{L}^d$
is Lebesgue measure on $\R^d$.}
\begin{equation} \label{eq:permutation to permuton}
    P_\sigma := \mathrm{Unif}\left(\bigcup_{i=1}^N \left[\frac{i-1}{N},\frac{i}{N}\right] \times \left[\frac{\sigma(i)-1}{N},\frac{\sigma(i)}{N}\right] \right ).
\end{equation}
Note the close relationship to the graph of the function
$\sigma:\{1,...,N\} \to \{1,...,N\}$.
We say that a sequence of permutons $P_N$
converges to a permuton $P$ if for each
continuous $f \in C([0,1]^2)$, we have $\int f dP_N \to \int f dP$, i.e. $P_N \to P$
in the weak-* topology on $\Prob([0,1]^2) \subset C([0,1]^2)^*$.
We say that a sequence of permutations $\sigma_N \in S_N$
(with $N \to \infty$) converges to a permuton $P$
if the associated permutons $P_{\sigma_N}$ converge to $P$.
With this topology, $\Perm$ is a compact (separable metrizable) topological space, and $\bigcup_{n=1}^\infty \{ P_\sigma : \sigma \in S_n\}$ is a dense subset.\footnote{The fact that any permuton is a limit of permutations is shown in Lemma 4.2 in Hoppen et al. \cite{Hoppen2013}.}
Therefore, $\Perm$ can be thought of as a \emph{compactification} of the set of permutations,
and a permuton $P$ can be thought of as a ``continuum limit'' of a sequence of permutations on a set of diverging size.

Permutons as limits of permutations
were first introduced by Presutti and Stromquist \cite{PresuttiStromquist2010}, 
and the theory was later developed by Hoppen, Kohayakawa, Moreira, R\'{a}th, and Sampaio in \cite{Hoppen2013}.
The relevance of permutons
to combinatorics comes from the following fact,
proved in \cite{Hoppen2013}: a sequence $\sigma_N \in S_N$ of permutations with $N \to \infty$ converges to a permuton %$P$
if and only if, for each fixed $\tau \in S_k$,
the \emph{subpattern density} of $\tau$ in $\sigma_N$
converges;
moreover, the limiting subpattern densities can be recovered from
the limit permuton $P$.
%to $\widetilde{\mathrm{occ}}(\tau,P)$,
%a quantity which can be expressed as a certain integral against $P^{\otimes k}$.
For precise statements and a more thorough discussion, we refer the reader to Section 2.1 of \cite{BorgaThesis}; but the point we emphasize here is that permuton limits encode a certain type of combinatorial information quite precisely. This has made permutons useful for studying pattern-avoiding permutations (e.g. \cite{MaazounThesis, Bassino2016, Bassino2020, Garbe2024}), as well as random permutations on sets of diverging size more generally (e.g. \cite{Runsort, Starr2009, Jin2019, Kenyon2020, BorgaThesis}).
Recent work has also connected permutons to fundamental random geometric models in two dimensions \cite{BorgaBaxterLiouville, Borga2024Reconstructing, BorgaGwynneSun2026}.

Since permutations have a natural composition operation coming from composition of functions, it is natural to ask
whether there is a corresponding operation on permutons.
In this article, we construct just such an operation and explore its properties; we also apply this understanding to describe the permuton limits of compositions of independent uniformly sampled separable permutations.
A key insight behind our proofs is to use a correspondence between permutons and certain linear operators, and then apply methods from functional analysis.

Our main theorem is a continuity result 
concerning our composition operation (which we will construct in \cref{sec:markov kernels} below):
\begin{theorem} \label{main}
    Let $(\sigma_N)_{N \in \N}$ and $(\tau_N)_{N \in \N}$ be two sequences of permutations,
    with each $\sigma_N, \tau_N \in S_{k_N}$ with $k_N \to \infty$.
    Suppose that $\sigma_N$ converges to the permuton $P$ 
    and $\tau_N$ converges to the permuton $Q$.
    If there exists a Lebesgue-measure preserving function $F:[0,1] \to [0,1]$ such that either:
    \begin{enumerate}
        \item $P$ is the pushforward of Lebesgue measure along the map $[0,1] \to [0,1]^2, x \mapsto (x,F(x))$, or
        \item $Q$ is the pushforward of Lebesgue measure along the map $[0,1] \to [0,1]^2, y \mapsto (F(y), y)$,
    \end{enumerate}
    then the sequence of 
    compositions $\sigma_N \tau_N$
    converges to $PQ$
    as $N \to \infty$.
\end{theorem}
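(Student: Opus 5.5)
We translate the statement into operators. To a permuton $P$ we associate the Markov operator $T_P$ on $L^2([0,1])$ defined by $\langle T_P f, g\rangle = \int f(x) g(y)\, dP(x,y)$, with $f,g \in L^2$. It is a positive contraction with $T_P 1 = 1 = T_P^* 1$. Composition $PQ$ corresponds, up to the ordering convention fixed in \cref{sec:markov kernels}, to the operator product, so that $T_{PQ} = T_P T_Q$ or $T_Q T_P$. Weak convergence of permutons $P_N \to P$ is equivalent to weak operator convergence $T_{P_N} \to T_P$. One direction is testing against $f \otimes g$ with $f,g$ continuous. The other uses density of linear combinations of such products in $C([0,1]^2)$, together with uniform marginals, which lets us pass from continuous $f,g$ to $L^2$ functions. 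For a permutation, $T_{P_\sigma}$ is the composition of the conditional expectation onto the step functions at scale $1/k_N$ with a unitary permutation of the blocks. Up to that projection, $T_{P_{\sigma\tau}}$ is exactly the corresponding product of the two operators. The projection converges strongly to the identity, so it costs nothing in the limit.

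The key fact is the following. If $P$ is the pushforward of Lebesgue measure along $x \mapsto (x,F(x))$ with $F$ measure-preserving, then $T_P$ (or $T_P^*$, depending on convention) is the Koopman operator $g \mapsto g\circ F$, which is an isometry of $L^2$. Hypothesis (2) gives the analogous statement for $T_Q^*$.

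Now a weakly convergent sequence of contractions whose limit is an isometry converges strongly. Indeed, with $A_N \to A$ weakly, $\|A_N\| \le 1$ and $A$ an isometry,
\[
\|A_N h - Ah\|^2 = \|A_N h\|^2 - 2\,\mathrm{Re}\langle A_N h, Ah\rangle + \|Ah\|^2 \le 2\|h\|^2 - 2\,\mathrm{Re}\langle A_N h, Ah\rangle \to 0 .
\]
For permutations this requires care about which of $T_{P_{\sigma_N}}$ or its adjoint is the contraction that converges weakly. Both do, since adjoints preserve weak convergence. We apply the lemma to whichever of $T_{P_{\sigma_N}}$, $T_{P_{\sigma_N}}^*$ has an isometric limit.

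It remains to pass to products. A product $A_N B_N$ with $A_N \to A$ strongly, $B_N \to B$ weakly and all norms at most $1$ satisfies $\langle A_N B_N f, g\rangle = \langle B_N f, A_N^* g\rangle$. Strong convergence of $A_N$ does not give strong convergence of $A_N^*$, so we need the correct ordering. If the strongly convergent factor is on the left, we take adjoints: write $\langle A_N B_N f, g\rangle = \langle f, B_N^* A_N^* g\rangle$ and handle it similarly. More robustly, we use the lemma on whichever side works. If $A_N \to A$ strongly, then $\langle B_N A_N f, g\rangle = \langle B_N (A_N f - Af), g\rangle + \langle B_N A f, g\rangle \to \langle BAf, g\rangle$. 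So strong convergence on the right factor suffices. By taking adjoints of the whole product, strong convergence of the adjoint of the left factor also suffices.

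Hypotheses (1) and (2) are designed so that the isometry lands in the appropriate position. Case (1) makes $T_P^*$ or $T_P$ an isometry sitting on the correct side, and case (2) does the same for $Q$. I will check this against the convention of \cref{sec:markov kernels}. This yields weak operator convergence $T_{P_{\sigma_N\tau_N}} \to T_{PQ}$, hence permuton convergence.

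The main obstacle is this bookkeeping. We must correctly identify $T_{P_{\sigma\tau}}$ with the product of $T_{P_\sigma}$ and $T_{P_\tau}$ modulo the block projections. We must also verify that in each case the isometric factor sits where the one-sided lemma applies, using adjoints when necessary. The rest, namely weak-to-strong for isometries and the operator dictionary, is soft.
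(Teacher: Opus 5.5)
Your proposal is correct and has the same architecture as the paper's proof. The ingredients match: the operator dictionary (your $T_P$ is exactly the paper's $\widehat{P}$), the upgrade from weak to strong convergence when the limit is an isometry, the mixed weak/strong product lemma (\cref{mixed weak and strong}), and adjoints for the other case.

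The bookkeeping you defer does work out. We have $T_{PQ}=T_PT_Q$. In case (2), $T_Qf=f\circ F$ is the isometry and it already sits on the right. In case (1), $T_P^{*}g=g\circ F$ is the isometry, and it sits on the right of $T_{PQ}^{*}=T_Q^{*}T_P^{*}$. Also, no block-projection correction is needed, since $P_\sigma P_\tau=P_{\sigma\tau}$ exactly (\cref{familiar special cases}).

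The one genuine difference is your weak-to-strong step. The paper (\cref{characterization of strong convergence}) obtains $\|\widehat{P}_N v\|\to\|v\|$ from $\widehat{P}_N^\dagger\widehat{P}_N=\widehat{P}_{\mathrm{id}_{k_N}}\to\mathrm{Id}$. That argument uses that the $P_N$ come from permutations with $k_N\to\infty$. You use only $\|\widehat{P}_N\|\le 1$, which holds for every permuton. Your version is therefore simpler, and it shows the conclusion holds for arbitrary weakly convergent sequences of permutons $P_N\to P$, $Q_N\to Q$ under hypothesis (1) or (2). What the paper's route additionally gives is the converse: strong limits of permutations are necessarily isometries. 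That converse is not needed for this theorem.
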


Several important (random) permuton limits satisfy the hypotheses of \cref{main}, most notably the \emph{(biased) separable Brownian permutons},
which were recently shown to form important universality classes for random permutation models in \cite{Bassino2020}.
Still, the theorem may at first glance appear overly restrictive.
The ideal situation would be to define a jointly continuous composition operation on permutons,
so that $\sigma_N \to P, \tau_N \to Q$ always implies $\sigma_N \tau_N \to PQ$;
then, the permuton limits of the individual permutation sequences would immediately
characterize the permuton limit of the composition.
Unfortunately, the composition operation we exposit here is \emph{not} jointly continuous,
and indeed no jointly continuous extension of composition 
from $\bigcup_{n=1}^\infty \{P_\sigma : \sigma \in S_n\}$ to $\Perm$ exists.

To see this, consider the following counterexample. Let $U$ be the ``uniform permuton,''
which is simply equal to Lebesgue measure on $[0,1]^2$.
To make our argument, we need the following well-known fact,
which can be seen as a special case of Lemma 4.2 in \cite{Hoppen2013}:

\begin{fact}
    For each $n$, sample $\sigma_n$ uniformly from $S_n$. Then
    \[
        \Pbb( \lim_{n \to \infty} \sigma_n = U ) = 1.
    \]
\end{fact}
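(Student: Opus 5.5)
We need to prove the Fact: for $\sigma_n$ uniform in $S_n$, $P_{\sigma_n}\to U$ almost surely. The plan is a second-moment (variance) estimate on a countable convergence-determining class, followed by Borel--Cantelli. Since $\Perm$ is compact and metrizable and all the measures involved are probability measures on $[0,1]^2$, it suffices to show that almost surely $P_{\sigma_n}(R)\to \mathcal{L}^2(R)$ for every rectangle $R=[0,a]\times[0,b]$ with $a,b$ rational. These distribution-function values at a dense set of points determine weak convergence, because the limit $U$ has continuous marginals and so puts no mass on lines. A countable intersection of probability-one events has probability one, so it is enough to treat a single fixed rectangle.

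Fix such $a,b$ and set $X_n := n P_{\sigma_n}([0,a]\times[0,b])$. Up to an error of $O(1)$ coming from the boundary row and column of cells, $X_n$ equals the count
\[ Y_n := \#\{ i \le \lfloor an\rfloor : \sigma_n(i) \le \lfloor bn\rfloor\}. \]
So it suffices to show $Y_n/n\to ab$ almost surely. The key steps are the following.
\begin{enumerate}
\item Compute the mean: $\E Y_n = \lfloor an\rfloor\lfloor bn\rfloor/n = abn+O(1)$.
\item Bound the fourth central moment: $\E\big[(Y_n-\E Y_n)^4\big]=O(n^2)$.
\item Apply Markov's inequality: $\Pbb(|Y_n/n-ab|>\epsilon)=O(n^{-2}\epsilon^{-4})$, which is summable in $n$.
\item Conclude by Borel--Cantelli that $Y_n/n\to ab$ almost surely.
\end{enumerate}
The fourth moment, rather than the variance, is used because the variance alone gives only $O(1/n)$ tail bounds. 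That is not summable, and since the $\sigma_n$ may be coupled arbitrarily we cannot pass to a subsequence.

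The main obstacle is the moment bound. I would handle it by noting that $Y_n$ is hypergeometric: the image set $\sigma_n(\{1,\dots,\lfloor an\rfloor\})$ is a uniformly random subset of size $\lfloor an\rfloor$, and $Y_n$ counts how many of its elements are at most $\lfloor bn\rfloor$. Hoeffding's theorem says that sampling without replacement is dominated in convex order by sampling with replacement. Hence the hypergeometric variable satisfies the same sub-Gaussian tail bound as the binomial,
\[ \Pbb\big(|Y_n-\E Y_n|>t\big)\le 2e^{-2t^2/n}. \]
Taking $t=\epsilon n - O(1)$ gives exponentially small, hence summable, probabilities, which bypasses the fourth-moment computation entirely.

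With this bound, Borel--Cantelli gives almost sure convergence for each rational rectangle. Intersecting over the countably many rational pairs $(a,b)$ and using the convergence-determining argument of the first paragraph then yields $\Pbb(\lim_n \sigma_n=U)=1$.
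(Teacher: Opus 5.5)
Your proof is correct, and it takes a different route from the paper. The paper gives no argument of its own. It notes that a uniform permutation has the law of $\mathrm{perm}(U,n)$, the pattern induced by $n$ i.i.d.\ uniform points of $I^2$, and then cites the general sampling result of Hoppen et al.\ (Lemma 4.2 of \cite{Hoppen2013}), by which permutations sampled from a permuton converge to it. You instead use structure special to $U$: for a grid-aligned rectangle the count $Y_n$ is exactly hypergeometric. Hoeffding's comparison with sampling with replacement then gives an exponential, hence summable, tail bound directly. In general a sampling argument has to contend with the random coordinates of the sampled points, since the induced permutation records only relative order; your argument avoids this entirely.

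Your reduction to rational rectangles $[0,a]\times[0,b]$ is sound. One quick way to see it is compactness of $\Perm$: any subsequential limit $\nu$ satisfies $\nu([0,a)\times[0,b))\le ab\le \nu([0,a]\times[0,b])$ for rational $a,b$, by the portmanteau theorem. Monotone approximation then forces $\nu([0,a]\times[0,b])=ab$ for all real $a,b$, so $\nu=U$. Also, the first Borel--Cantelli lemma needs no independence, so your argument holds for any joint law of the $\sigma_n$ across $n$. This matches the statement, which leaves that coupling unspecified.

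The citation buys generality: the same lemma yields density of permutations in $\Perm$. Your approach buys a short, self-contained proof.

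One presentational point: your enumerated steps 2--3, the fourth-moment bound, are never carried out and are superseded by the Hoeffding bound. Either drop them, or note that the fourth-moment bound also follows from Hoeffding's convex-order comparison applied to the convex function $x\mapsto (x-\E Y_n)^4$.
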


Now, sample $\sigma_n$ from $S_n$ uniformly, and note that its inverse $\sigma_n^{-1}$ is also uniform on $S_n$.
Therefore we have $\sigma_n, \sigma_n^{-1} \to U$ with probability 1
but $\sigma_n \sigma_n^{-1} = \id_n \to \Delta$,
where $\Delta$ is the ``diagonal permuton,'' which is the pushforward
of Lebesgue measure along the diagonal map $[0,1] \to [0,1]^2, x \mapsto (x,x)$.
Thus, any jointly continuous composition operation must satisfy $UU = \Delta$.

On the other hand, if we sample $\sigma_n, \tau_n$ \emph{independently} uniformly from $S_n$,
then $\sigma_n \tau_n$ is also uniform on $S_n$, meaning that 
$\sigma_n, \tau_n, \sigma_n \tau_n \to U$; therefore any 
jointly continuous composition operation must also satisfy $UU = U$.
Hence no such operation can exist.

In spite of this, the composition we present enjoys (at least) three good properties:
\begin{enumerate}
    \item Composition is continuous with one variable fixed, that is, if $P_n \to P \in \Perm$,
        then for all $Q \in \Perm$ we have $P_n Q \to PQ$ and $QP_n \to QP$ (see \cref{continuity fixing one component}).
    \item There is another natural topology on $\Perm$, the \emph{strong} topology,
        with respect to which composition \emph{is} jointly continuous.
        Moreover, we provide a neat characterization of strong convergence
        for sequences of permutations of diverging size (see \cref{characterization of strong convergence}).
    \item Composition is compatible with taking expectations of random permutons (see \cref{expectation composition}).
        That is, if $P$ and $Q$ are independent \emph{random} permutons, then
        \[ \E[PQ] = \E[P] \E[Q]. \]
\end{enumerate}

Our main theorem will be an immediate consequence of strong continuity of composition 
(which is straightforward to show) together with our characterization of
strong convergence of permutations.
Point (3) will be helpful for understanding and describing some of the limits we obtain from applying \cref{main} a bit more explicitly; see \cref{application} below.
%The third property will then allow
% us to deduce some facts about expected
%subpattern densities of independent products of certain random permutations;
%in particular, we get explicit formulas for the \emph{expectation}
%of the limit of some sequences of compositions of random permutons.
%We also produce a simple sufficient condition for ``mixing'' of a permuton,
%which will tell us that certain large independent products are nearly uniform in expectation.

A straightforward application of \cref{main}
yields the following. Recall that $\Delta$ denotes the diagonal permuton.
\begin{cor} \label{corollary diagonal}
    Suppose that $\sigma_N, \tau_N$
    are (possibly random) permutations
    and $P \in \Perm$ (possibly random) with
    $\sigma_N \to \Delta$ (almost surely) and $\tau_N \to P$ (in distribution).
    Then
    \[
        \sigma_N \tau_N, \tau_N \sigma_N \to P
    \]
    (in distribution).
    For instance, if $\sigma_N$
    is a uniformly sampled permutation 
    avoiding\footnote{For a definition of what it means for a permutation to contain or avoid a pattern, we refer the reader to Section 2.1 of \cite{BorgaThesis}.} the pattern $(231)$,
    and $\tau_N \to P$,
    then $\sigma_N \tau_N, \tau_N \sigma_N \to P$.
    Moreover, if $\sigma_N^{(1)},...,\sigma_N^{(k)}$
    are uniformly sampled $(231)$-avoiding permutations, we have
    \[
       \sigma_N^{(1)} \cdots \sigma_N^{(k)} \to \Delta
    \]
    as $N \to \infty$, and for each $i=1,...,k$ and $1 \le \ell < \infty$ we have the convergence of the $\ell$\textsuperscript{th} power
    \[
        (\sigma_N^{(i)})^{\ell} \to \Delta
    \]
    almost surely as $N \to \infty$.
\end{cor}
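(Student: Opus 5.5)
The plan is to apply \cref{main} with $P$ replaced by $\Delta$. Note that $\Delta$ is the pushforward of Lebesgue measure along $x \mapsto (x,F(x))$ with $F = \id$, which is measure preserving, and also along $y \mapsto (F(y),y)$ with $F=\id$. So hypothesis (1) of \cref{main} holds when $\Delta$ is the left factor, and hypothesis (2) holds when $\Delta$ is the right factor. I also need the algebraic fact that $\Delta$ is a two-sided identity for the composition constructed in \cref{sec:markov kernels}, so that $\Delta P = P\Delta = P$. I expect this to be immediate from the construction, since $\Delta$ corresponds to the identity Markov kernel/operator.

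\textbf{Random case.} The first step is to pass from ``$\sigma_N \to \Delta$ a.s., $\tau_N \to P$ in distribution'' to the deterministic theorem. Since $\Perm$ is compact metrizable, I would use Skorokhod representation on the separable space $\Perm$ to realize $\tau_N$ on a probability space where $P_{\tau_N} \to P$ almost surely. The subtlety is the joint law of $(\sigma_N,\tau_N)$. I would handle it by applying Skorokhod to the pair $(P_{\sigma_N},P_{\tau_N}) \in \Perm^2$: its law converges to that of $(\Delta,P)$, because the first coordinate converges a.s. to a constant, so Slutsky applies. On the new space, both coordinates converge almost surely. Since $\sigma_N\tau_N$ is a measurable function of the pair of permutations, and $k_N\to\infty$ (which is forced because $\Delta$ is not a finite permutation permuton), I can apply \cref{main} pointwise. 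This gives $\sigma_N\tau_N \to \Delta P = P$ and $\tau_N\sigma_N \to P\Delta = P$ almost surely on the new space, hence in distribution on the original one. This coupling step is the main technical obstacle, mostly in bookkeeping. If the composition map on permutations is awkward to lift, I would instead use a subsequence argument: every subsequence has a further subsequence along which the laws are tight and the argument above applies.

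\textbf{The $(231)$-avoiding examples.} These use the known fact that uniform $(231)$-avoiding permutations converge almost surely to $\Delta$. I would cite this rather than prove it; if needed, it follows from their expected pattern densities concentrating on the identity pattern. For the product $\sigma_N^{(1)}\cdots\sigma_N^{(k)}$, I would induct on $k$ using the first part with $P=\Delta$. If $\sigma^{(1)}_N\cdots\sigma^{(k-1)}_N \to \Delta$ almost surely and $\sigma_N^{(k)}\to\Delta$ almost surely, then the deterministic statement applied pointwise gives a.s. convergence of the product to $\Delta\Delta=\Delta$. No independence is needed, since almost sure convergence to the deterministic limit $\Delta$ can be fed into \cref{main} path by path. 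The powers $(\sigma_N^{(i)})^\ell$ are the special case where all factors are equal, by the same induction on $\ell$.
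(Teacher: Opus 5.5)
Your proposal is correct and follows the paper's own argument. The paper notes that $\Delta=\gr(\id)=\gr(\id)^\dagger$ satisfies both hypotheses of \cref{main}, that $\Delta$ is the identity, and cites Miner--Pak and Hoffman--Rizzolo--Slivken for the convergence of uniform $(231)$-avoiding permutations to $\Delta$. Your Slutsky/Skorokhod coupling of the pair $(P_{\sigma_N},P_{\tau_N})$ supplies a detail the paper leaves implicit, since its \cref{main random more complicated} assumes independence and this corollary does not.
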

Note that the above requires no extra assumptions on $P$, nor even any \emph{independence} assumptions on the sequences of permutations; if a random permutation converges to $\Delta$,
composing it with another random permutation will not affect the latter's limit, even if the permutations are strongly correlated. The fact that arbitrary finite powers and independent products of $231$-avoiding permutations also approach $\Delta$ is not a priori obvious, as the property of being $231$-avoiding is not deterministically preserved by compositions or powers.

As a more involved application of \cref{main}, we also prove the following fact about compositions of random separable permutations\footnote{A \emph{separable} permutation is one which avoids the subpatterns 2413 and 1342; see e.g. \cite{Bassino2016}.}:

\begin{theorem} \label{application}
    For each $N \in \N$, let
    $\sigma_N^{(1)},\sigma_N^{(2)},...$ be an i.i.d. sequence
    of random permutations
    sampled uniformly from the 
    set of all separable permutations in $S_N$.
    For each fixed $k$, as $N \to \infty$, we have
    \[
        \sigma_N^{(1)} \cdots \sigma_N^{(k)} \to P_1 \cdots P_k,
    \]
    in distribution,
    where $P_1,P_2,...$
    is an i.i.d. sequence
    of \emph{separable Brownian permutons} as defined in \cite{Bassino2016}.
    Moreover, the expectation of this limit satisfies
    \[
        \E[P_1 \cdots P_k] = \E[P_1]^k,
    \]
    and in particular, is absolutely continuous with respect to Lebesgue measure, with density $\gamma \in L^1([0,1]^2)$
    given by
    \[
        \gamma(x,y)
        =
        \int_{0}^1 \cdots \int_0^1
        \alpha(z_k,y) \left(\prod_{i=1}^{k-1}
        \alpha(z_i,z_{i+1})\right)
        \alpha(x,z_1) dz_1 \cdots dz_k,
    \]
    where $\alpha = \alpha_{1/2}$
    is as defined in Theorem 1.7 of \cite{Maazoun2017}.
    Moreover, the expectation of this limit \emph{mixes} as the number of compositions goes to infinity, that is:
    \[
        \lim_{k \to \infty} \E[P_1 \cdots P_k] = \lim_{k \to \infty} \E[P_1]^k = U
    \]
    where $U = \mathcal{L} \otimes \mathcal{L}$ is the uniform permuton.
\end{theorem}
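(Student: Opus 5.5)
We argue by induction on $k$, using \cref{main} at each step. The base case $k=1$ is the theorem of Bassino et al.\ \cite{Bassino2016}: a uniform separable permutation $\sigma_N^{(1)}$ converges in distribution to the separable Brownian permuton $P_1$. We also need the structural fact, implicit in \cite{Bassino2016, Maazoun2017}, that almost surely $P_1$ is the pushforward of Lebesgue measure along $x \mapsto (x, F(x))$ for some Lebesgue-measure-preserving $F$. Indeed, the Brownian separable permuton is constructed from a signed Brownian excursion as the law of $(U, \varphi(U))$ with $U$ uniform; hence it is supported on the graph of a measure-preserving map.

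For the inductive step, suppose $\sigma_N^{(1)}\cdots\sigma_N^{(k-1)} \to P_1\cdots P_{k-1}$ in distribution. By independence, this holds jointly with $\sigma_N^{(k)} \to P_k$, with the limits independent. By Skorokhod representation we may realize all of these convergences almost surely on one probability space, keeping the law of each pair $(\text{product},\,\sigma_N^{(k)})$ fixed. Since $P_k$ is a.s.\ of the graph form required in hypothesis (2), after transposing appropriately or applying (1) with the factor in the first slot, \cref{main} applies pathwise. It gives $\sigma_N^{(1)}\cdots\sigma_N^{(k)} \to P_1\cdots P_k$ almost surely in the coupling, hence in distribution.

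The step I expect to need the most care is matching the orientation of \cref{main}. With the convention $\sigma\tau$, the hypothesis on $Q=P_k$ asks that $P_k$ be the pushforward along $y\mapsto(F(y),y)$, i.e.\ the transpose of a graph. The cleanest fix is to note that the inverse of a uniform separable permutation is again uniform separable, since separability is preserved by inversion: the pattern set $\{2413,3142\}$ is closed under inverse. So the law of $P_k$ is invariant under transposition. Alternatively, one can order the induction as $\sigma^{(1)}\cdot(\sigma^{(2)}\cdots\sigma^{(k)})$ and use hypothesis (1) on $P_1$ directly. In either case, one has to check that the coupled limit is still the product of independent copies.

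For the expectation, apply property (3), i.e.\ \cref{expectation composition}, inductively: $\E[P_1\cdots P_k]=\E[P_1]\,\E[P_2\cdots P_k]=\E[P_1]^k$. By \cite{Maazoun2017}, $\E[P_1]$ has density $\alpha$. Composition of absolutely continuous permutons corresponds to composition of the associated Markov kernels, i.e.\ $(\alpha\ast\beta)(x,y)=\int\alpha(x,z)\beta(z,y)\,dz$, and iterating gives the stated formula for $\gamma$.

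For mixing, view $\E[P_1]$ as the Markov operator $T f(x)=\int\alpha(x,z)f(z)\,dz$ on $L^2[0,1]$. It fixes constants and is self-adjoint if $\alpha$ is symmetric; otherwise use $T^*T$. The mean-zero subspace is invariant, and I would show that $T$ restricted to it has norm $<1$, or at least spectral radius $<1$ with compactness. Here $T$ is Hilbert–Schmidt if $\alpha\in L^2$; if only $\alpha\in L^1$, $T$ may still be compact, or $T^2$ may be Hilbert–Schmidt. The norm bound would follow from positivity of $\alpha$ almost everywhere, via the strict Jensen/Perron–Frobenius argument: $\|Tf\|=\|f\|$ forces $f$ constant. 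Then $T^k f\to\int f$ in $L^2$, so $\int g\otimes h\,d\E[P_1]^k=\langle g,T^k h\rangle\to\int g\int h$. Density of such products in $C([0,1]^2)$ gives weak convergence to $U$. Establishing positivity and integrability of $\alpha$ from its explicit formula in \cite{Maazoun2017} is the main technical obstacle of this last step.
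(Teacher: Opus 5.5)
Your treatment of the convergence in distribution and of the expectation matches the paper's. The paper uses the same ingredients: a Skorokhod coupling plus \cref{main} applied pathwise, packaged as \cref{main random more complicated}, with the graph-type factors on the left, which is your option (b). In fact no reordering or inversion trick is needed. By \cref{familiar special cases}(2), $P_1\cdots P_{k-1}$ is itself a.s.\ the graph of a measure-preserving map, so hypothesis (1) of \cref{main} applies to $(\sigma^{(1)}_N\cdots\sigma^{(k-1)}_N)\,\sigma^{(k)}_N$ directly. A small aside: iterating the kernel composition gives $k$ factors of $\alpha$ and only $k-1$ intermediate variables. The displayed $\gamma$, with variables $z_1,\dots,z_k$, is therefore really the density of $\E[P_1]^{k+1}$; this is an indexing slip in the statement, not in your reasoning.

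The gap is in the mixing step. Your strict Jensen argument is correct: if $\alpha>0$ a.e., the only $f$ with $\|Tf\|_2=\|f\|_2$ are constants. But this does not give $\|T\|<1$ on the mean-zero subspace, because the supremum of $\|Tf\|_2/\|f\|_2$ over mean-zero $f$ need not be attained. Your fallback, compactness of $T$, is never established; you only note it would follow from $\alpha\in L^2$, which you do not check. (Integrability of $\alpha$ itself is automatic and is not the real obstacle.) So $T^kf\to\int f$ is not proved as written.

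The missing idea is that neither a spectral gap nor compactness is needed.
\begin{itemize}
\item Since $T$ is a self-adjoint contraction, item (4) of \cref{spectral theorem} applied to $z^{2n}\to\ind_{\{-1,1\}}(z)$ gives $T^{2n}\to\ind_{\{-1,1\}}(T)=:E$ strongly.
\item $E$ is an orthogonal projection with $T^2E=E$. So every $g$ in its range satisfies $\|Tg\|_2^2=\langle T^2g,g\rangle=\|g\|_2^2$, and is therefore constant by your Jensen argument.
\item Also $E1=\lim T^{2n}1=1$, so $Ef=\int f$. Hence $T^{2n}f\to\int f$, and $T^{2n+1}f=T(T^{2n}f)\to\int f$ as well.
\end{itemize}
Self-adjointness is available. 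Symmetry of $\alpha$ can be read off from Maazoun's formula, and it also follows from your own inversion-invariance observation, since $\E[P_1]^\dagger=\E[P_1^\dagger]=\E[P_1]$. Drop the ``otherwise use $T^*T$'' fallback, which would not control $T^k$.

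With this repair, your route differs from the paper's only in how the strong limit of the even powers is identified. The paper shows the limit is an idempotent permuton, hence a conditional expectation operator. It then uses \cref{absolute continuity} and \cref{characterization of uniform} to force it to equal $U$, under the weaker hypothesis $P(A\times B)>0$ for all positive-measure $A,B$. Your Jensen argument identifies the range directly from the stronger a.e.\ positivity of the density, which is shorter for this application.
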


\begin{figure}[h]
\includegraphics[width=0.3\textwidth]{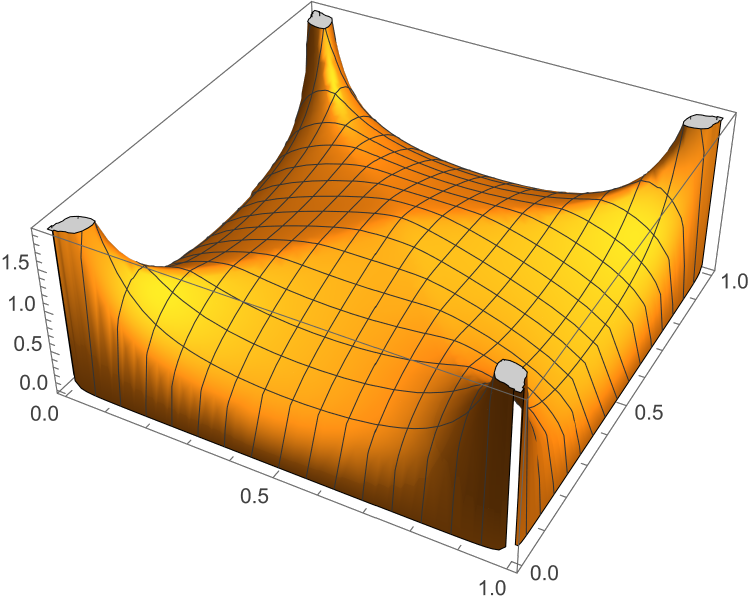}
\includegraphics[width=0.3\textwidth]{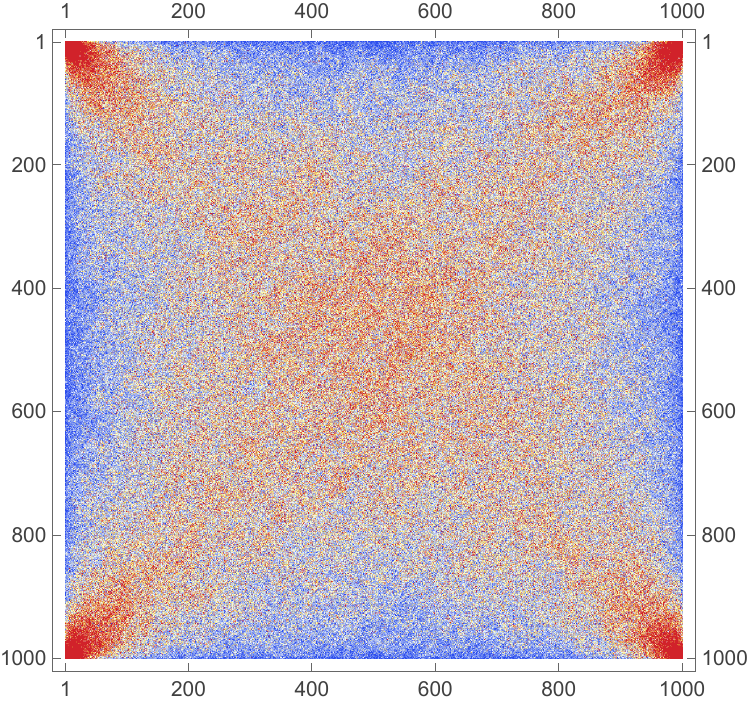}
\caption{Left: Graph of the density $\gamma$ of $\E[P_1 P_2]$,
where $P_1,P_2$ are independent separable Brownian permutons.
Right: Heatmap of 2500 independent samples of $\sigma \tau$,
where $\sigma, \tau$ are independent uniform separable permutations from $S_{1000}$.
Software for sampling uniform separable permutations by Benedikt Stufler. Visualizations produced by Mathematica.}
\label{fig:graph-vs-heatmap}
\end{figure}

\begin{figure}[h]
\includegraphics[width=0.3\textwidth]{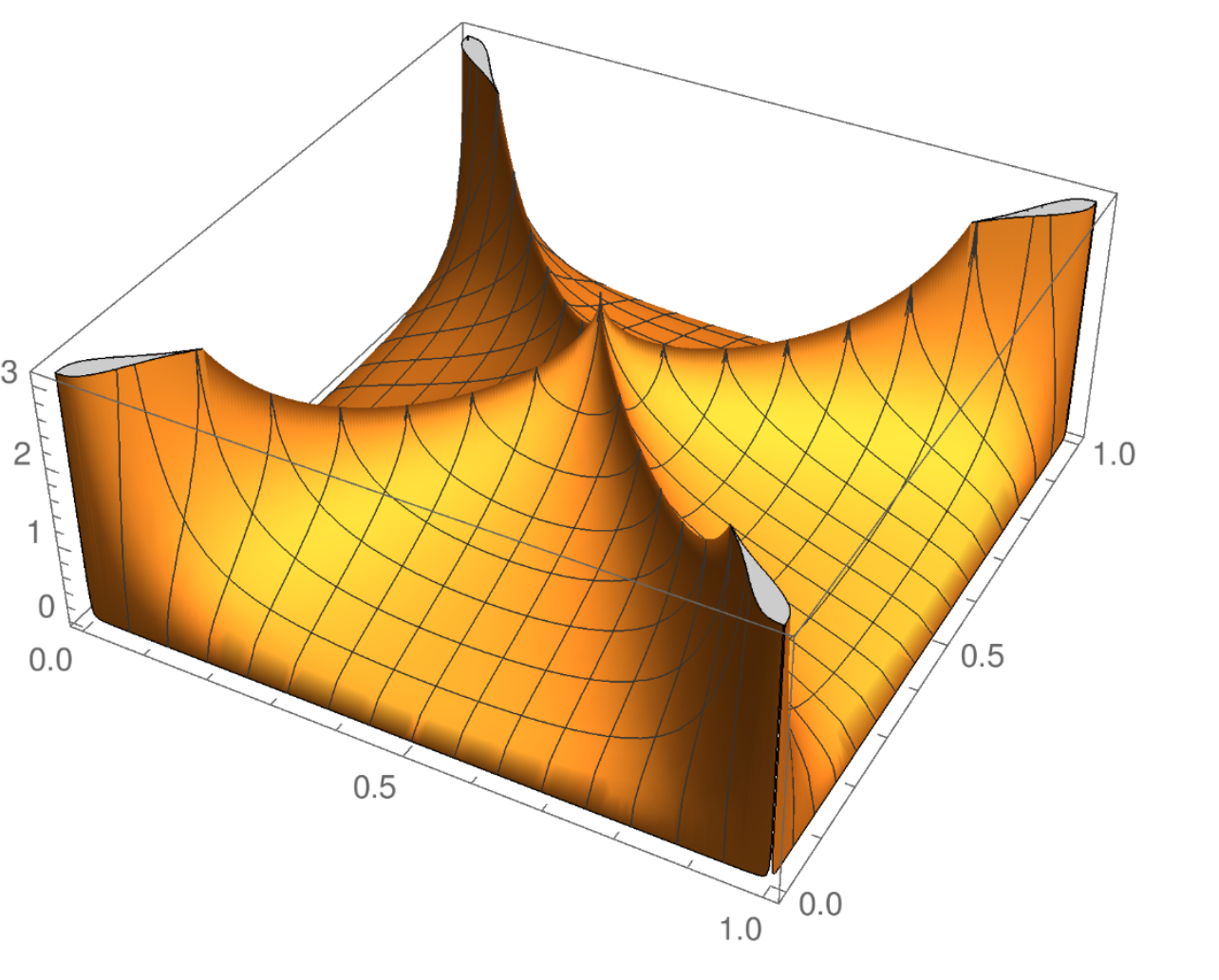}
\includegraphics[width=0.3\textwidth]{composition-of-two-separable-graph.pdf}
\includegraphics[width=0.3\textwidth]{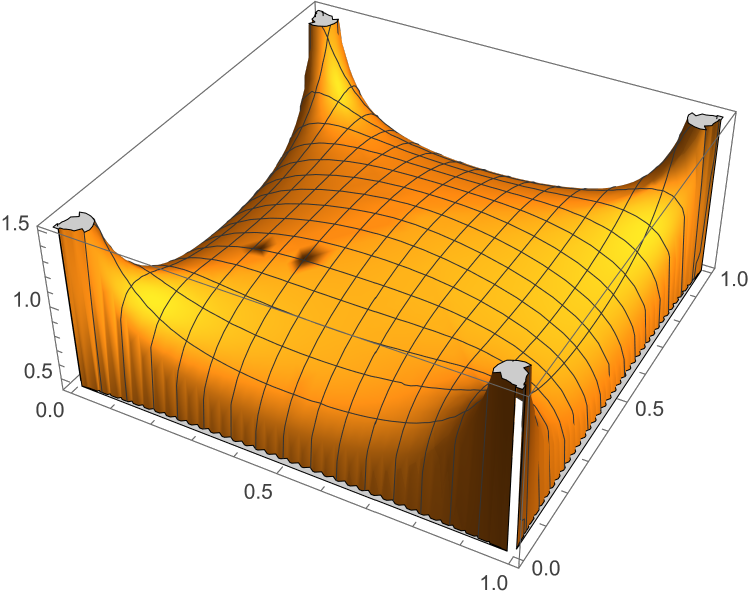}
\caption{Graphs of densities of $\E[P_1], \E[P_1 P_2],$ and $\E[P_1 P_2 P_3]$
for $P_1, P_2, P_3$ independent Brownian separable permutons.
We see that as the number of compositions increases, the density becomes smoother and closer to uniform, consistent
with the final statement in \cref{application}.}
\label{fig:graphs-of-compositions}
\end{figure}

See \cref{fig:graph-vs-heatmap} and  \cref{fig:graphs-of-compositions}
for some visualizations of the density $\gamma$ for $k=1,2,3$.
To show this last statement describing
the limit for a large number of \emph{compositions},
we will prove a criterion for ``mixing'' of permutons, see \cref{sec:mixing}. This criterion will follow from the spectral theorem together with a characterization of idempotent permutons.

We define our composition on $\Perm$ and prove facts about it by using two perspectives on $\Perm$. First, we can view permutons as \emph{Markov kernels};
second, we can view them as \emph{Markov operators}.
The latter perspective is particularly useful, as it gives us access to tools from functional analysis. The former perspective we find more intuitive for certain types of arguments (for instance, those involving convexity).

The outline of the paper is as follows.
In \cref{sec:notation} we establish basic notation and definitions. In \cref{sec:markov kernels}
we describe the correspondence between permutons and stationary Markov kernels, use this to construct our composition on $\Perm$, and collect a few basic facts about this composition.
In \cref{sec:markov operators}, we describe the correspondence between permutons and certain linear operators; in particular, composition of permutons corresponds to composition of operators, and the usual topology on permutons corresponds to the weak operator topology.
In \cref{sec:strong convergence}, we use the linear operator perspective to define strong convergence of permutons, and we provide a neat characterization
of strong convergence of a sequence of permutations to a permuton.
In \cref{sec:proof of main}, we combine our characterization of strong convergence of permutations with an elementary functional analysis argument to prove \cref{main} and \cref{corollary diagonal}.
After this, we give the extra ingredients needed to obtain \cref{application}: facts about expectation of permutons and a sufficient condition for ``mixing'' of a permuton (we say a permuton ``mixes'' if its powers tend to the uniform permuton $U = \mathcal{L} \otimes \mathcal{L}$).
In \cref{sec:expectation} we provide a proof that $\E[PQ] = \E[P]\E[Q]$ for independent random permutons $P,Q$. We use this and \cref{main} 
in \cref{sec:first application}
to prove the first part of \cref{application},
which describes the mean limit of the composition of finitely many independent uniformly sampled separable permutations. In \cref{sec:mixing},
we prove our sufficient condition for mixing of a permuton. The two main ingredients here are the Spectral Theorem and a characterization of idempotent permutons.
We use this to complete the proof of \cref{application} in \cref{sec:second application}. As a bonus, we also show that the family of biased separable Brownian permutons is \emph{not} closed under independent composition.
\cref{sec:algebra} compiles the algebraic facts about $\Perm$ that we have shown throughout the paper into one proposition for the interested reader.
\cref{sec:future} discusses open questions and future directions.
The appendices contain proofs of various facts which are not included in the main body of the paper.

\subsection{Previous work}

For an overview of permutons and their application to combinatorics, the reader is referred to \cite{BorgaThesis}.
We have mentioned above several works
\cite{PresuttiStromquist2010, Hoppen2013, BorgaThesis, MaazounThesis, Bassino2016, Bassino2020, Garbe2024,
Runsort, Starr2009, Jin2019, Kenyon2020,
BorgaBaxterLiouville, Borga2024Reconstructing, BorgaGwynneSun2026} which use this technology to understand random permutations and random geometry.
We note that of the works listed, only \cite{Jin2019} addresses the question of \emph{compositions} of random permutations.
\cite{Jin2019} does allude to our notion of composition in a special case,
and anecdotally, many experts will come up with the correct
notion of composition if prompted.
However, the present article is to our knowledge the first attempt
in the literature to define the composition in general
in the context of permutons,
and the first application of a functional analytic
perspective to the question of composition of permutons.

It should be remarked that outside of the context of permutons, the composition operation we construct is known and occurs in at least a few different contexts.
Since a permuton is equivalent to a self-coupling of Lebesgue measure $\mathcal{L}$ on $[0,1]$,
our composition will be a specific instance of composition in the \emph{category of couplings},
as studied in categorical probability \cite{Law, EnsarguetPerrone}.
Moreover, in ergodic theory, the composition constructed
is well-known as a \emph{relatively independent joining},
and the connection to Markov operators is also known (see e.g. Chapter 6.2 of \cite{Glasner2003}).
This operation was also introduced
by Darsow, Nguyen, and Olsen \cite{Copula92}
in the literature in the context of \emph{copulas},
which are mathematically equivalent objects
with a statistical interpretation.
Some work of A. M. Vershik (e.g. \cite{Vershik83})
studies these objects under the name of \emph{polymorphisms}.
Some results on Markov operators related to \cref{characterization of strong convergence} were proved by James R. Brown 
in 
\cite{Brown66}.\footnote{Note that in \cite{Brown66},
the Markov operator associated to a bistochastic measure (i.e. permuton)
is the \emph{transpose} of 
the operator we choose here.
We choose this so that the map from
permutons to operators is
a \emph{homomorphism}
rather than an \emph{antihomomorphism},
see \cref{basic operator facts}.}
From a formal perspective,
most of the intermediate results here 
involving the algebra and topology of $\Perm$ are not new, and have been rediscovered several times in different contexts. The chief novelty therefore comes from the application of these ideas to the study of limits of permutations.

Although most of the components of the proof
of \cref{main} can be found somewhere in the literature
for copulas or Markov operators, we generally provide
proofs here either in the body of the paper or in the appendix, for three reasons. First,
this keeps the present article self-contained and emphasizes the simple and elegant nature of most of the proof ingredients.
Second, it should give the combinatorialist
reading this some indication of 
some methods for arguing about permutons
and their compositions, particularly
in terms of their disintegrations
or associated operators.
Third, 
the proofs we give were developed without knowledge of most of the aforementioned work, and
so tend to take a slightly different perspective
which is not as easy to find in the literature.
For instance, our proof of
\cref{characterization of isometries}
takes advantage of the interplay between
the two descriptions of permutons
as Markov kernels and Markov operators;
in our opinion, it provides
a clearer explanation of 
why isometries
correspond to (transposes of)
graphs of functions
than one could obtain
by using the operator perspective alone.
The choice of which proofs are in the body of the paper and which are deferred to the appendix is largely determined by considerations of readability.

\subsection{Acknowledgments}
The author thanks Valentin F\'{e}ray for providing references, helpful conversations, and suggestions on early versions of the manuscript.
The author also thanks Jacopo Borga, whose mini-course on permutons at the Thematic Program on Randomness and Geometry at the Fields Institute in Winter of 2024
ultimately inspired this project.

\section{Notation and basic definitions} \label{sec:notation}

To save space, we will denote by $I$ the unit interval $[0,1] \subset \R$.
We will denote by $\mathcal{L}$ Lebesgue measure on $I$ and we will denote by $\mathcal{L}^2$ Lebesgue measure on the unit square $I^2$.
We shall also refer to $\mathcal{L}$ and $\mathcal{L}^2$
as the \emph{uniform} measures on $I$ and $I^2$ respectively.
When we integrate with respect to Lebesgue measure, for brevity, we will usually write $dx$ for the volume element rather than $d\mathcal{L}(x)$, that is:
\[
    \int_{x \in I} f(x) dx 
    :=
    \int_{x \in I} f(x) d\mathcal{L}(x).
\]

We define the standard coordinate projections:
\[
\pi_1:I \times I \to I, (x,y) \mapsto x,
\]
\[
\pi_2:I \times I \to I, (x,y) \mapsto y.
\]

For a measurable map $f:X \to Y$ and a measure $\mu$ on $X$, the \emph{pushforward of $\mu$ along $f$} is the measure $f_*\mu$ on $Y$ defined by
\[
    f_* \mu(B) := \mu(f^{-1}(B)) = \mu(\{x \in X : f(x) \in B\}).
\]
We denote the set of all probability measures on a measurable space $X$ by $\Prob(X)$.\footnote{Here,
$X$ is equipped with some fixed $\sigma$-algebra $\mathcal{F}$ which we suppress in our notation;
typically $X$ will be a topological space,
and the underlying $\sigma$-algebra will be the Borel $\sigma$-algebra, unless otherwise specified.}

\begin{definition}
    A \emph{permuton} 
    is a probability measure
    $P \in \Prob(I \times I)$
    on $I \times I$
    whose marginals are uniform; that is, such that
    \[
    (\pi_1)_* P =
    \mathcal{L} =
    (\pi_2)_* P.
    \]
\end{definition}

As further notation:
\begin{itemize}
\item 
We denote the collection of all permutons by
$\Perm$.
\item 
For a permutation $\sigma \in S_N$,
we define the associated permuton $P_\sigma$
as in \eqref{eq:permutation to permuton}.
\item
We denote by $U$
the \emph{uniform permuton}
$U = \mathcal{L}^2 = \mathcal{L} \otimes \mathcal{L}$.
\item
If $F:I \to I$ is a measure preserving function, then
the \emph{graph of $F$}
is the permuton
\[  
    \gr(F):= 
    (\id \times F)_* \mathcal{L}
\]
where $\id \times F:I \to I \times I$
is the map $(\id \times F)(x) = (x, F(x))$.
\item
We denote by $\Delta$ the \emph{diagonal} or \emph{identity} permuton
\[
    \Delta := \gr(\id)
    = (\id \times \id)_* \mathcal{L}.
\]
\end{itemize}

If $\mu \in \Prob(X)$
and $\nu_{\cdot}:X \to \mathrm{Prob}(Y)$
is a family of
probability measures indexed by $x \in X$,
we define the
integral
$\int \nu_x d\mu(x) \in \Prob(Y)$ by
\[
    \left( \int \nu_x d\mu(x) \right)(B)
    =
    \int \nu_x(B) d\mu(x).
\]
Equivalently, for any bounded measurable $F:Y \to \R$, we have
\[
    \int F(y)
    d\left(\int \nu_x d\mu(x)
    \right)(y)
    =
    \int_{x \in X} \left(\int_{y \in Y} F(y) d\nu_x(y)\right) d\mu(x).
\]

If $\mu \in \Prob(X), \nu \in \Prob(Y)$, we define the \emph{product measure} $\mu \otimes \nu \in \Prob(X \times Y)$
to be the probability measure satisfying
\[
    (\mu \otimes \nu)(A \times B) = \mu(A) \nu(B)
\]
for all measurable $A \subset X, B \subset Y$.

For $x \in X$, we will denote by $\delta_x$ the Dirac measure at $x$, that is,
\[
    \delta_x(B)
    =
    \begin{cases}
        1 & \mbox{if } x \in B \\
        0 & \mbox{otherwise.}
    \end{cases}
\]

Using the above, we have the following equivalent definitions:
\[
    U = 
    \int (\delta_x \otimes \mathcal{L}) dx,
\]
\[
    \gr(F) =
    \int (\delta_x \otimes \delta_{F(x)})dx,
\]
\[
    \Delta =
    \int (\delta_x \otimes \delta_x)dx.
\]

For a set $A$,
we denote by $\ind_A$
the \emph{indicator function}
\[
    \ind_A(x) :=
    \begin{cases}
        1 &\mbox{ if } x \in A \\
        0 &\mbox{ if } x \notin A
    \end{cases}.
\]
More generally, if $\mathcal{P}$
is some proposition,
$\ind(\mathcal{P})$
is equal to $1$
if $\mathcal{P}$
is true, and is equal to $0$ otherwise.

\section{Construction of the composition operation via Markov kernels} \label{sec:markov kernels}

Our understanding of compositions of permutons
will be facilitated by correspondences with two other types of objects: \emph{Markov kernels}
and \emph{Markov operators}.
In this section we give the correspondence
between permutons and Markov kernels, 
define the composition operation,
and discuss a few basic properties
of the composition.

Let us first recall the definition of a Markov kernel.
\begin{definition}
    A \emph{Markov kernel} on $I$ is a map $p:I \to \mathrm{Prob}(I)$, $x \mapsto p_x$ such that for every Borel subset $B \subseteq I$,
    the map $x \mapsto p_x(B)$ is measurable.
\end{definition}
The reader may find it useful to imagine Markov kernels as being ``graphed'' in $I \times I$, with each measure $p_x$ lying on $\{x\} \times I$, sitting ``over'' $x \in I$. It will become even more clear why this picture is appropriate when we discuss disintegrations of measures below.

Markov kernels naturally act on measures:
\begin{definition}
    If $p:I \to \mathrm{Prob}(I)$ is a Markov kernel and $\mu \in \mathrm{Prob}(I)$ is a measure on $I$, then we define
    \[
        p \circ \mu := \int_{x \in I} p_x d\mu(x) \in \mathrm{Prob}(I),
    \]
    where the integral of probability measures above is as defined
    in \cref{sec:notation}.
%    Here, integrals of probability measures are taken in the usual weak sense, so if $f \in C(I)$, we have
%    \[
%        \int f d(p \circ \mu) = \int_{x \in I} \int_{y \in I} f(y) dp_x(y) d\mu(x).
%    \]
\end{definition}

\begin{definition}
    A Markov kernel $p$ is called $\mu$-\emph{stationary} if $p \circ \mu = \mu$.
    Here, if a Markov kernel $p$ on $I$ is $\mathcal{L}$-stationary with $\mathcal{L}$ Lebesgue measure on $I$,
    we may simply say $p$ is \emph{stationary}.
\end{definition}

Now we define composition of Markov kernels:
\begin{definition}
    Let $p$ and $q$ be Markov kernels on $I$. Then we define the composition $p \circ q$ to be the Markov kernel on $I$ given by
    \[
        (p \circ q)_x := p \circ (q_x) = \int p_y d(q_x)(y).
    \]
\end{definition}

It is straightforward to check that composition of Markov kernels is associative, that it is compatible with the action of Markov kernels on measures,
and that a composition of stationary Markov kernels is stationary.

\begin{remark}
Markov kernels naturally arise in the context of Markov processes on a continuous state space.
In this context, if the current state of the system is $x \in I$, then 
the next state of the system is independently sampled from the measure $p_x \in \mathrm{Prob}(I)$.
If we sample the initial state $X_0$ according to a probability measure $\mu \in \Prob(I)$, and then sample $X_1$ according to $p_{X_0}$,
then the distribution of $X_1$ is exactly $(p \circ \mu)$.
Thus, in the context of Markov processes, $p$ being $\mu$-stationary is exactly equivalent to $\mu$
being a stationary measure for the Markov process represented by the kernel $p$.

Moreover, if $p_x$ describes the distribution of the state after one time interval starting from $x \in I$,
then the $n$-fold composition $(p^{\circ n})_x$ describes the distribution of the state after $n$ time intervals starting from $x$.
Generally, the composition $q \circ p$ describes the overall effect of first transitioning according to $p$, then transitioning according to $q$.
We do not strictly need this interpretation 
for our arguments,
but the reader may find it helpful for intuition.
\end{remark}

An important fact we will use is that, for \emph{stationary} Markov kernels, composition is well-defined up to $\mathcal{L}$-null-sets.
\begin{fact} \label{composition is well defined up to ae equivalence}
   Let $p, p', q, q': I \to \mathrm{Prob}(I)$ all be Markov kernels for which Lebesgue measure $\mathcal{L}$ on $I$ is stationary.
   Further suppose that
   \[
      \mathcal{L} \{ x : p_x = p'_x \} = 1 = \mathcal{L} \{ x : q_x = q'_x \}.
   \]
   Then 
   \[
      \mathcal{L} \{ x : (q \circ p)_x = (q' \circ p')_x \} = 1.
   \]
\end{fact}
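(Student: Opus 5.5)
We first fix the ordering convention, since the paper defines $(p\circ q)_x = \int p_y\, d(q_x)(y)$. Thus
\[
(q \circ p)_x = \int q_y \, d(p_x)(y), \qquad (q'\circ p')_x = \int q'_y\, d(p'_x)(y).
\]
We will compare these through the intermediate kernel $q' \circ p$, and prove two claims separately.

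\textbf{Step 1: replacing $p$ by $p'$.} We have $p_x = p'_x$ for $\mathcal{L}$-a.e.\ $x$. At each such $x$ the integrating measures coincide, so
\[
(q'\circ p)_x = \int q'_y\, d(p_x)(y) = \int q'_y \, d(p'_x)(y) = (q'\circ p')_x .
\]
Hence $q'\circ p$ and $q'\circ p'$ agree off an $\mathcal{L}$-null set, and nothing beyond the hypothesis is needed here.

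\textbf{Step 2: replacing $q$ by $q'$.} This is the main point, and it is where stationarity of $p$ enters. Let $N = \{y : q_y \neq q'_y\}$. This set is measurable, since $q$ and $q'$ are measurable maps into $\Prob(I)$ with its standard Borel structure. To see this, fix a countable generating $\pi$-system $\{B_j\}$ of Borel sets, for instance rational intervals. Then $N = \bigcup_j \{y : q_y(B_j) \neq q'_y(B_j)\}$, because two probability measures agreeing on a generating $\pi$-system are equal. Each set in the union is measurable, so $N$ is measurable, and by hypothesis $\mathcal{L}(N) = 0$.

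Stationarity of $p$ gives $\int p_x(N)\, dx = (p\circ\mathcal{L})(N) = \mathcal{L}(N) = 0$. So $p_x(N) = 0$ for $\mathcal{L}$-a.e.\ $x$. For each such $x$, the integrands $q_y$ and $q'_y$ agree for $p_x$-a.e.\ $y$. Therefore, for every Borel $B$,
\[
(q\circ p)_x(B) = \int q_y(B)\, dp_x(y) = \int q'_y(B)\, dp_x(y) = (q'\circ p)_x(B).
\]
This holds for all $B$ simultaneously at such $x$, so $(q\circ p)_x = (q'\circ p)_x$ for a.e.\ $x$.

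\textbf{Conclusion.} Combining the two steps, $(q\circ p)_x = (q'\circ p)_x = (q'\circ p')_x$ off the union of two $\mathcal{L}$-null sets. This is the claim.

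Only stationarity of $p$ was actually used; the stationarity of $q$, $q'$, $p'$ is not needed. The only mildly delicate point is the measurability of $N$, handled above via the countable generating family. Alternatively, one can avoid it by applying the stationarity argument to each of the countably many measurable sets $\{q_\cdot(B_j)\neq q'_\cdot(B_j)\}$, which gives agreement of the composites on every $B_j$ for a.e.\ $x$. The $\pi$-$\lambda$ theorem then extends this agreement to all Borel $B$.
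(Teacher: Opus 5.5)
Your proof is correct and follows essentially the paper's argument. You pass through an intermediate composite and note that swapping the inner kernel is a pointwise identity. You then use stationarity of the inner kernel to turn the $\mathcal{L}$-null disagreement set of the outer kernels into a $p_x$-null set for a.e.\ $x$. The differences are cosmetic: the paper routes through $q\circ p'$ (using stationarity of $p'$) and works one rational interval $B$ at a time via an $L^1$ bound, exactly as in your closing alternative, rather than first proving measurability of $N$.
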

This fact is well known,
but we include a measure-theoretic proof in the appendix to keep this paper relatively self-contained.
We note that
the stationarity assumption is necessary;
to see this,
consider the (non-stationary) kernels
$p_x = \delta_0,
q_x = \delta_0,
(q')_x = \ind(x=0) \delta_1 + \ind(x \ne 0)\delta_0$.\footnote{A more general statement is that if $p,p',q,q'$
are Markov kernels and
$\mu, \nu$ are probability measures with
$p \circ \mu = q \circ \mu = \nu$,
if $\mu\{x \in I : p_x = p'_x\} = 1 = \nu\{y \in I : q_y = q'_y \}$,
then
$\mu\{x \in I : (q \circ p)_x = (q' \circ p')_x \} = 1$.
But we only consider the case $\mu = \nu = \mathcal{L}$ here.
}

To describe the correspondence between permutons and Markov kernels,
we need the concept of a \emph{disintegration} of measures, also sometimes called a
\emph{regular system of conditional distributions}.
For a detailed discussion of this (classical) concept,
see for instance Chapter 10.4 of Bogachev \cite{Bog07}. 
For intuition, one can imagine that
the disintegration theorem gives an analogue of Fubini's theorem for an arbitrary probability measure $M$ on $X \times Y$; to integrate a function with respect to $M$,
one can first integrate along each fiber $\{x\} \times Y$
(with respect to an appropriate measure $\mu_x$),
and then integrate over $X$.

\begin{theorem}[Disintegration of measures, Lemma 10.4.3 and Theorem 10.4.5, \cite{Bog07}] \label{disintegration}
    Let $M$ be a probability measure on $X \times Y$, where $X$ and $Y$
    are standard Borel spaces,
    and suppose that $\nu \in \mathrm{Prob}(X)$
    is the pushforward of $M$
    along the projection to $X$.
    Then there exists 
    a Markov kernel $\mu:X \to \mathrm{Prob}(Y)$
    such that
    \[
        M = \int_X (\delta_x \otimes \mu_x) d\nu(x).
    \]
    We call $\mu$ a \emph{disintegration} of $M$. \\
    Moreover, if $\mu':X \to \mathrm{Prob}(Y)$ is another Markov kernel with this property, then we have that
    \[
        \nu(\{x \in X : \mu_x = \mu'_x \})
        =1.
    \]
    That is, the disintegration of $M$ is unique up to $\nu$-a.s. equality.
\end{theorem}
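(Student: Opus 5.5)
The plan is to prove existence via the Radon--Nikodym theorem and a countable generating family of Borel sets, and uniqueness via a $\pi$-$\lambda$ argument over a countable generating algebra. Since $X$ and $Y$ are standard Borel, we may fix Borel isomorphisms onto Borel subsets of $I$. It therefore suffices to treat the case $X, Y \subseteq I$ Borel; in fact we may take $Y = I$ and push measures back at the end.

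\textbf{Existence.} For each rational $q \in \mathbb{Q} \cap I$, define the finite measure $M_q(A) := M(A \times [0,q])$ on $X$. It is absolutely continuous with respect to $\nu$, so Radon--Nikodym gives a density $F_q(x) := dM_q/d\nu(x)$ with $0 \le F_q \le 1$. Off a countable union of $\nu$-null sets, the family $q \mapsto F_q(x)$ has three properties:
\begin{itemize}
\item it is nondecreasing in $q$;
\item it satisfies $F_1(x) = 1$;
\item it is right-continuous along rationals, in the sense $F_q(x) = \lim_{r \downarrow q,\, r \in \mathbb{Q}} F_r(x)$.
\end{itemize}
The last property holds a.e. for each $q$ by monotone convergence applied to $M_r \downarrow M_q$. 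On this good set $G$, we extend $F_\cdot(x)$ to a right-continuous CDF on $I$ and let $\mu_x$ be the corresponding Lebesgue--Stieltjes probability measure. For $x \notin G$, we set $\mu_x$ to be a fixed measure, say $\delta_0$.

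\textbf{Measurability and the disintegration identity.} Measurability of $x \mapsto \mu_x(B)$ holds for $B = [0,q]$ by construction. The class of $B$ for which it holds is a $\lambda$-system containing the $\pi$-system of such intervals, so Dynkin's theorem gives it for all Borel $B$. The identity $M(A \times B) = \int_A \mu_x(B)\, d\nu(x)$ holds for $B = [0,q]$ by definition of $F_q$. Both sides are finite measures in $B$ for fixed $A$, so Dynkin again extends it to all Borel $B$. Both sides then agree on measurable rectangles, and uniqueness of extension from rectangles gives $M = \int (\delta_x \otimes \mu_x)\, d\nu(x)$.

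\textbf{Uniqueness.} Suppose $\mu'$ is another disintegration. Then for every Borel $A \subseteq X$ and every rational $q$ we have $\int_A \mu_x([0,q])\, d\nu = M(A \times [0,q]) = \int_A \mu'_x([0,q])\, d\nu$. Hence $\mu_x([0,q]) = \mu'_x([0,q])$ for $\nu$-a.e. $x$. Intersecting over countably many $q$ gives a full-measure set on which $\mu_x$ and $\mu'_x$ have the same CDF at all rationals. By right-continuity they have the same CDF everywhere, so $\mu_x = \mu'_x$.

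\textbf{Main obstacle.} The main obstacle is the existence step: constructing a genuine kernel out of densities that are each defined only up to null sets. The fix is to use only countably many sets, handle the null sets there, and then use right-continuity to extend from rationals to all of $I$. Standardness of the spaces is used exactly here, to reduce to $I$, where countably generated CDFs determine measures.
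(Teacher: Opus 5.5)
Your proof is correct. It is the standard textbook construction of a regular conditional distribution via conditional distribution functions evaluated at rational points.

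The paper does not prove this statement at all. It simply cites Bogachev, where the result comes from general theory of regular conditional measures (built on compact approximating classes and countably generated $\sigma$-algebras, and valid well beyond standard Borel spaces), together with an a.e.\ uniqueness lemma.

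The difference between the two routes is economy versus generality:
\begin{itemize}
\item Your route needs only Radon--Nikodym, Dynkin's $\pi$-$\lambda$ theorem and Lebesgue--Stieltjes measures. It uses the linear order of $I$ in an essential way: countably many values $F_q(x)$ determine $\mu_x$, and right-continuity is what lets you discard the countably many exceptional null sets coherently.
\item The cited route needs no embedding into $I$ and covers far more general settings, at the price of heavier machinery.
\end{itemize}

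For this paper your version is fully adequate, since the theorem is only applied with $X=Y=I$. There the Borel-isomorphism reduction disappears. On the $X$ side it was never needed anyway, because Radon--Nikodym works on any measurable space.

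If you keep the general standard Borel statement, spell out the pull-back step you wave at. Take a Borel isomorphism $\varphi$ of $Y$ onto a Borel set $\varphi(Y)\subseteq I$, and build the kernel for $M'=(\mathrm{id}\times\varphi)_*M$. It satisfies $\int \mu_x(I\setminus\varphi(Y))\,d\nu(x)=M'(X\times(I\setminus\varphi(Y)))=0$, so $\mu_x(\varphi(Y))=1$ only off a measurable $\nu$-null set. On that set you must redefine $\mu_x$ as some fixed probability measure on $\varphi(Y)$ before transporting back by $\varphi^{-1}$.
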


\begin{definition}
    Let $P \in \mathrm{Perm}$ be a permuton. Then the \emph{Markov kernel} $p: I \to \mathrm{Prob}(I)$ 
    \emph{associated to $P$} 
    is the (unique up to $\mathcal{L}$-a.e. equality)
    disintegration of $P$
    guaranteed by applying \cref{disintegration}
    with $X = I = Y$.
\end{definition}

\begin{definition}
    Let $p:I \to \mathrm{Prob}(I)$
    be a Markov kernel
    for which $\mathcal{L}$
    is a stationary distribution.
    Then the \emph{permuton} $P \in \mathrm{Perm}$ \emph{associated to} $p$
    is the measure on $I \times I$ defined by
    \[
        P = \int_I (\delta_x \otimes p_x)dx.
    \]
\end{definition}

\begin{fact} \label{bijection}
   The constructions above define a bijection
   \[
      \{ \mathrm{permutons} \} \leftrightarrow \left\{ \mbox{stationary Markov kernels on } I \right\}/\sim,
   \]
   where the equivalence relation $\sim$ identifies  Markov kernels which are equal on a set of Lebesgue measure 1.
\end{fact}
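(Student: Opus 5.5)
We will check three things: that both constructions land in the claimed sets, that they respect the equivalence $\sim$, and that they are mutually inverse. The main tool is \cref{disintegration}, applied with $X = Y = I$ and $\nu = \mathcal{L}$.

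\textbf{From permutons to kernels.} Let $P \in \Perm$. Since $(\pi_1)_*P = \mathcal{L}$, \cref{disintegration} gives a kernel $p$ with $P = \int (\delta_x \otimes p_x)\,dx$, unique up to $\sim$. So the map $P \mapsto [p]$ is well defined. To see that $p$ is stationary, take a Borel set $B \subseteq I$ and compute
\[
(p\circ\mathcal{L})(B) = \int p_x(B)\,dx = P(I\times B) = (\pi_2)_*P(B) = \mathcal{L}(B).
\]
Hence $p \circ \mathcal{L} = \mathcal{L}$.

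\textbf{From kernels to permutons.} Let $p$ be a stationary kernel. The measure $P = \int(\delta_x\otimes p_x)\,dx$ is a probability measure on $I^2$; the measurability of $x \mapsto p_x(B)$ extends from rectangles to all Borel sets by a standard monotone class argument. Its marginals are uniform:
\[
P(A\times I) = \int \ind_A(x)\,dx = \mathcal{L}(A), \qquad P(I\times B) = \int p_x(B)\,dx = \mathcal{L}(B),
\]
where the second equality uses stationarity. If $p \sim p'$, then $P(A\times B) = \int \ind_A(x)p_x(B)\,dx$ is unchanged, since the integrands agree for a.e.\ $x$. Rectangles form a $\pi$-system generating the Borel $\sigma$-algebra, so $P$ itself is unchanged. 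Thus the map descends to equivalence classes.

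\textbf{Mutual inverses.} Starting from $P$, taking its disintegration $p$ and then forming the associated permuton returns $\int(\delta_x\otimes p_x)\,dx = P$, by the defining property of the disintegration. Starting from a stationary $p$, its associated permuton $P$ satisfies $P = \int(\delta_x\otimes p_x)\,dx$ by construction, so $p$ is itself a disintegration of $P$. By the uniqueness clause of \cref{disintegration}, any disintegration chosen for $P$ is $\sim p$.

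The only step needing genuine care is this uniqueness up to $\mathcal{L}$-a.e.\ equality, and it is supplied entirely by \cref{disintegration}. The rest is routine verification on rectangles followed by the $\pi$–$\lambda$ theorem.
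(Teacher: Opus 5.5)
Your proof is correct and follows essentially the same route as the paper's. You use disintegration plus a marginal computation to obtain a stationary kernel, the integral formula plus stationarity to obtain a permuton, and the uniqueness clause of \cref{disintegration} to show the two maps are mutually inverse. The differences are cosmetic: you check marginals and $\sim$-invariance on rectangles rather than via linearity of pushforward, and you supply a monotone-class measurability argument that the paper leaves implicit. That argument is properly stated for $x \mapsto (\delta_x \otimes p_x)(S)$ with $S \subseteq I^2$ Borel, not for $x \mapsto p_x(B)$.
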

We provide a proof of \cref{bijection} in the appendix.
\cref{composition is well defined up to ae equivalence} and \cref{bijection} together show that the following  composition operation on permutons is well-defined:
\begin{definition} \label{def:composition}
    Let $P, Q \in \mathrm{Perm}$.
    Then the \emph{composition} $PQ$
    of $P$ and $Q$ is the unique
    permuton corresponding
    to the Markov kernel $p \circ q$,
    where $p,q$ are the Markov kernels associated to $P,Q$ respectively.
\end{definition}
As a notational point, we will generally follow the convention that the Markov kernel associated to a permuton is denoted by the lowercase of the letter denoting the permuton.

We will repeatedly use the following basic facts about the composition operation on $\Perm$, whose proofs we leave as an exercise to the reader:
\begin{prop} \label{basic Perm facts}
    \begin{enumerate}
        \item Composition as defined in \cref{def:composition} makes $\Perm$
        into a semigroup, that is, composition is a well-defined associative binary operation on $\Perm$.
        \item The diagonal permuton $\Delta = \int_{x \in I} (\delta_x \otimes \delta_x)dx$
        is the identity element for $\Perm$, i.e. $P\Delta = P = \Delta P$ for all $P \in \Perm$.
        \item The uniform permuton $U = \int_{x \in I} (\delta_x \otimes \mathcal{L})dx
        = \mathcal{L} \otimes \mathcal{L}$
        is the zero element for $\Perm$, i.e. $UP = U = PU$
        for all $P \in \Perm$.
        In particular, $\Perm$ is \emph{not} a group.
    \end{enumerate}
\end{prop}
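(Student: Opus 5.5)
All three parts follow by working with the associated stationary Markov kernels and transporting the conclusions back through \cref{bijection}. Well-definedness of the product $PQ$ is already given by \cref{composition is well defined up to ae equivalence} together with \cref{bijection}. The only other point to check for (1) is that a composition of stationary kernels is again stationary. Using associativity of the action of kernels on measures, $(p\circ q)\circ\mathcal{L} = p\circ(q\circ \mathcal{L}) = p\circ\mathcal{L} = \mathcal{L}$, so $PQ \in \Perm$.

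For associativity of kernel composition, fix $x$ and a Borel set $B$. Unwinding the definitions gives
\[
((p\circ q)\circ r)_x(B) = \int\!\!\int p_z(B)\, dq_y(z)\, dr_x(y)
\]
and
\[
(p\circ (q\circ r))_x(B) = \int p_z(B)\, d\Big(\int q_y\, dr_x(y)\Big)(z).
\]
These agree by the defining identity for integrals of measure families from \cref{sec:notation}, applied to the bounded measurable function $z\mapsto p_z(B)$. Since the kernels agree pointwise, \cref{bijection} gives $(PQ)R = P(QR)$.

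For (2), the kernel of $\Delta$ is $x\mapsto \delta_x$ (this disintegrates $\Delta$ directly). Then $(p\circ\delta)_x = \int p_y\, d\delta_x(y) = p_x$ and $(\delta\circ p)_x = \int \delta_y\, dp_x(y) = p_x$ for every $x$. Hence $P\Delta = P = \Delta P$.

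For (3), the kernel of $U$ is the constant kernel $u_x = \mathcal{L}$. Then $(u\circ p)_x = \int \mathcal{L}\, dp_x(y) = \mathcal{L}$, since $p_x$ is a probability measure. Also $(p\circ u)_x = \int p_y\, dy = p\circ\mathcal{L} = \mathcal{L}$ by stationarity of $p$. So both compositions equal the kernel of $U$, giving $UP = U = PU$.

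To see that $\Perm$ is not a group, suppose $U$ had an inverse $V$. Then $\Delta = UV = U$, which is false because $\Delta$ is supported on the diagonal, a set of $\mathcal{L}^2$-measure zero.

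There is no real obstacle here. The only subtle point is that $PU = U$ relies on the stationarity of $p$, which is exactly where the uniform marginal of $P$ enters.
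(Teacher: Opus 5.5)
Your proposal is correct and matches the argument the paper intends. The paper leaves this proposition as an exercise, relying on associativity and stationarity of kernel composition together with \cref{composition is well defined up to ae equivalence} and \cref{bijection}; your kernel computations for $\Delta$ (kernel $x\mapsto\delta_x$) and $U$ (constant kernel $\mathcal{L}$), including the use of stationarity of $p$ for $PU=U$, are the intended verifications. The one step you leave implicit is that in forming $(PQ)R$ you may use $p\circ q$ itself as the kernel of $PQ$, and that is exactly what \cref{composition is well defined up to ae equivalence} guarantees, so nothing is missing.
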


\subsection{Familiar special cases}
Let us check that the composition operation we have defined coincides
with familiar composition operations
in three special cases: composition of permutations, composition of functions, and convolution of densities.
Beyond making the composition on $\Perm$ more concrete, we will actually use all three of these special cases for our main theorems.
Recall the definitions of 
$P_\sigma$ and $\gr(F)$ from \cref{sec:notation}.

\begin{prop} \label{familiar special cases}
    Let $P, Q \in \Perm$.
    \begin{enumerate}
        \item Suppose there exist permutations $\sigma, \tau \in S_N$ such that $P = P_{\sigma}, Q = P_{\tau}$. Then $PQ = P_{\sigma \tau}$,
        where $\sigma \tau \in S_N$
        is the usual composition of permutations.
        \item Suppose there exist measure-preserving functions $F,G$
        such that $P = \gr(F), Q = \gr(G)$.
        Then $PQ = \gr(F \circ G)$,
        where $F \circ G$
        is usual function composition.
        \item
        Suppose there exist $f,g \in L^1(I \times I)$
        such that $dP(x,y) = f(x,y)dxdy$
        and $dQ(x,y) = g(x,y)dx dy$.
        Then $d(PQ)(x,y) = (f*g)(x,y) dx dy$,
        where
        \[
            (f*g)(x,y) :=
            \int_{z \in I}
            f(z,y) g(x,z) dz.
        \]
    \end{enumerate}
\end{prop}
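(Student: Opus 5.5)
The plan is to verify each case directly at the level of Markov kernels. In each case I identify explicit representatives of the kernels $p,q$ associated to $P,Q$, compute $p\circ q$ pointwise, and recognize the permuton $\int(\delta_x\otimes(p\circ q)_x)\,dx$. By \cref{composition is well defined up to ae equivalence} and \cref{bijection}, any representatives agreeing with the true disintegrations $\mathcal{L}$-a.e. will do. So I am free to redefine kernels on null sets, such as block boundaries or exceptional Fubini sets, to obtain genuine stationary Markov kernels. To certify that a candidate kernel $p$ is a disintegration of $P$, it suffices to check $P=\int(\delta_x\otimes p_x)\,dx$ on rectangles $A\times B$, by the uniqueness part of \cref{disintegration}.

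\textbf{Case (2).} The definition $\gr(F)=\int(\delta_x\otimes\delta_{F(x)})\,dx$ shows that $p_x=\delta_{F(x)}$ and $q_x=\delta_{G(x)}$ are the associated kernels. They are stationary precisely because $F$ and $G$ preserve $\mathcal{L}$. Then
\[
(p\circ q)_x=\int p_y\,d\delta_{G(x)}(y)=p_{G(x)}=\delta_{F(G(x))},
\]
so $PQ=\gr(F\circ G)$.

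\textbf{Case (1).} Write $J_i=[\frac{i-1}{N},\frac{i}{N}]$ and $\lambda_i=\mathrm{Unif}(J_i)$. Checking on rectangles, the kernel of $P_\sigma$ is $p_x=\lambda_{\sigma(i)}$ for $x$ in the interior of $J_i$, with an arbitrary choice such as $\mathcal{L}$ on the finitely many endpoints. Similarly, $q_x=\lambda_{\tau(i)}$ for $x$ in the interior of $J_i$. For such $x$, the measure $q_x$ is supported on $J_{\tau(i)}$. For $\lambda_{\tau(i)}$-a.e.\ $y$ in that block, $p_y=\lambda_{\sigma(\tau(i))}$, so
\[
(p\circ q)_x=\lambda_{\sigma\tau(i)}.
\]
This is exactly the kernel of $P_{\sigma\tau}$ off a null set.

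\textbf{Case (3).} Since $(\pi_1)_*P=\mathcal{L}$, Fubini gives that for a.e.\ $x$ the function $f(x,\cdot)$ is nonnegative, integrable, and has integral $1$. I set $p_x=f(x,y)\,dy$ there and $p_x=\mathcal{L}$ on the exceptional null set, and define $q_x=g(x,z)\,dz$ likewise. Measurability of $x\mapsto p_x(B)$ follows from Fubini, and the rectangle check confirms these are disintegrations. Then, by Tonelli,
\[
(p\circ q)_x(B)=\int_z p_z(B)\,g(x,z)\,dz=\int_B\int_z f(z,y)\,g(x,z)\,dz\,dy,
\]
so $(p\circ q)_x$ has density $y\mapsto(f*g)(x,y)$. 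Integrating over $x$ shows that $PQ$ has density $f*g$, which is in $L^1$ with nonnegative values by Tonelli.

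\textbf{Main obstacle.} The only real subtlety is bookkeeping with null sets. The kernels must be modified on null sets so that they are honest Markov kernels. One then needs that composing with $q_x$ never "sees" the bad set of $p$. This holds because $q_x$ is absolutely continuous in cases (1) and (3), and, in case (2), by stationarity via \cref{composition is well defined up to ae equivalence}.
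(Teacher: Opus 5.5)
Your proposal is correct and essentially matches the paper's proof: cases (2) and (3) are the same kernel computations, except that you test against sets $B$ where the paper uses $\phi \in C(I)$, and you handle the null-set bookkeeping more explicitly. The one difference is case (1), where you compose the block-uniform kernels directly, whereas the paper derives (1) from (3) by convolving the step densities $N\sum_i \ind_{I_i}(x)\ind_{I_{\sigma(i)}}(y)$; both routes are equally short.
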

\begin{proof}
    (2): If $P = \gr(F), Q = \gr(G)$,
    then $p_x = \delta_{F(x)}, q_x=\delta_{G(x)}$,
    and we compute
    \begin{align*}
        (p \circ q)_x
        &=
        \int_{y \in I} p_y dq_x(y) 
        = \int_{y \in I} \delta_{F(y)} d\delta_{G(x)}(y) 
        = \delta_{F(G(x))},
    \end{align*}
    which is the disintegration for $\gr(F \circ G)$.

    (3): If $dP(x,y) = f(x,y)dxdy$
    and $dQ(x,y) = g(x,y)dxdy$,
    then $dp_x(y) = f(x,y)dy$
    and $dq_x(y) = g(x,y)dy$.
    So, for an arbitrary
    $\phi \in C(I)$, we compute
    \begin{align*}
        \int_{z \in I} \phi(z)d(p \circ q)_x(z)
        &=
        \int_{z \in I} \int_{y \in I} 
        \phi(z) dp_y(z) dq_x(y) 
        = \int_{z \in I} \int_{y \in I} 
        \phi(z) f(y,z) g(x,y) dz dy \\
        &=\int_{z \in I} \phi(z) (f*g)(x,z) dz,
    \end{align*}
    that is,
    $d(PQ)(x,z) = (p \circ q)_x(z)dx = (f*g)(x,z)dzdx$, as desired.

    (1): If $P = P_\sigma$ and $Q = P_\tau$,
    note that $P$ has density
    \[
        f(x,y) := N\sum_{i=1}^N \ind_{I_i}(x) \ind_{I_{\sigma(i)}}(y),
    \]
    where we take $I_i := [(i-1)/N, i/N) \subseteq I$,
    and we have an analogous formula for 
    a density $g(x,y)$ for $Q$.
    Using (3), we then compute a density for $PQ$ by
    \begin{align*}
        (f*g)(x,y) &= \int_{z \in I} f(z,y) g(x,z) dz \\
        &= N^2 \int_{z \in I} \sum_{i=1}^N \sum_{j=1}^N
        \ind(x \in I_i, z \in I_{\tau(i)} = I_j, y \in I_{\sigma(j)} = I_{\sigma(\tau(i))})dz \\
        &= N^2 \sum_{i=1}^N \ind(x \in I_i) \mathcal{L}(I_{\tau(i)}) \ind_{y \in I_{\sigma \tau (i)}}
        = N \sum_{i=1}^N \ind_{I_i}(x) \ind_{I_{\sigma \tau(i)}}(y),
    \end{align*}
    which is the density for $P_{\sigma \tau}$.
\end{proof}

\subsection{Adjoints and a useful formula}
Let us recall the notion of the \emph{transpose} of a permuton $P$,
which we will denote by $P^\dagger$ and which we will also call
the \emph{adjoint} of $P$.

\begin{definition}
    If $P \in \Perm$, the \emph{transpose} or \emph{adjoint} $P^\dagger$ of $P$ is the pushforward of $P$
    by the coordinate-switching map $I \times I \to I \times I, (x,y) \mapsto (y,x)$.
\end{definition}

%Since a permuton is just a coupling of the uniform measure with itself, it is easy to see that any permuton $P$
%gives rise to another permuton $P^\dagger$ just by switching the first and second coordinates.
%That is, considering permutons as measures on $I \times I$, $P^\dagger$ is just the pushforward of $P$
%along the map $I \times I \to I \times I, (x,y) \mapsto (y,x)$.
%We call $P^{\dagger}$ the \emph{adjoint} or \emph{transpose} of $P$.

It is straightforward to check the following facts:
\begin{prop}
    Let $P, Q \in \Perm$.
    \begin{enumerate}
        \item $P^\dagger \in \Perm$.
        \item $(P^\dagger)^\dagger = P$.
        \item $(sP + tQ)^\dagger = sP^\dagger + tQ^\dagger$.
    \end{enumerate}
\end{prop}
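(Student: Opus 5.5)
Let $S:I\times I\to I\times I$ denote the swap map $(x,y)\mapsto(y,x)$, so that $P^\dagger = S_*P$. All three claims follow directly from this definition, using only that pushforward along a fixed measurable map is functorial and linear. I will verify them in the order stated.

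For (1), note first that $S$ is a Borel bijection, so $S_*P$ is a probability measure on $I\times I$. Its marginals come from the identities $\pi_1\circ S=\pi_2$ and $\pi_2\circ S=\pi_1$. By functoriality of pushforwards,
\[
(\pi_1)_*P^\dagger=(\pi_1\circ S)_*P=(\pi_2)_*P=\mathcal{L},
\]
and symmetrically $(\pi_2)_*P^\dagger=(\pi_1)_*P=\mathcal{L}$. Hence $P^\dagger\in\Perm$.

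For (2), since $S\circ S=\id$, functoriality gives $(P^\dagger)^\dagger=S_*S_*P=(S\circ S)_*P=P$.

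For (3), I read $sP+tQ$ as the convex combination with $s,t\ge0$ and $s+t=1$, which is again a permuton since marginals are linear in the measure. More generally, the identity holds for any finite signed combination of measures. It follows because pushforward is linear: for every Borel set $B$,
\[
S_*(sP+tQ)(B)=sP(S^{-1}B)+tQ(S^{-1}B)=(sS_*P+tS_*Q)(B).
\]

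There is no real obstacle in any of these steps. The only point needing a moment's care is (3), where one should fix the interpretation of $sP+tQ$ as a convex combination so that both sides are genuinely permutons.
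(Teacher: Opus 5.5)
Your proof is correct and takes essentially the same approach as the paper. The paper calls (1) and (2) straightforward from the definition of $P^\dagger$ as the pushforward under the coordinate swap, and attributes (3) to linearity of pushforward; you make this explicit via $\pi_1\circ S=\pi_2$, $S\circ S=\mathrm{id}$, and linearity, with the same convex-combination reading of $sP+tQ$ that the paper uses elsewhere.
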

The last item is by linearity of pushforward of measures.

Let us describe the relationship between the Markov kernels $p$ and $p^\dagger$
associated to $P$ and $P^\dagger$.
Recall that $p$ is implicitly defined by
\[
   P = \int_{x \in I} (\delta_x \otimes p_x) dx.
\]
Pushing forward both sides of the equation
be the map $(x,y) \mapsto (y,x)$, we then see that
\[
   P^{\dagger} = \int (p_y \otimes \delta_y) dy.
\]
Pairing with a function $f$, we can rewrite this relationship as
\[
   \int_{x \in I} \int_{y \in I} f(x,y) d (p^{\dagger})_x(y) dx = \int_{y \in I} \int_{x \in I} f(x,y) dp_y(x) dy,
\]
and we can abbreviate this relationship by writing
\[
   dP^\dagger(x,y) = d(p^\dagger)_x(y) dx = dp_y(x) dy.
\]
The symmetry between $P$ and $P^\dagger$ also gives us
\[
   dP(x,y) = dp_x(y) dx = d(p^\dagger)_y(x) dy.
\]

With these facts about adjoints, 
we can show the following alternative formula
for composition of permutons, which
resembles the convolution 
formula for absolutely continuous 
permutons from \cref{familiar special cases}.
This formula will be useful
for one of our key lemmas (\cref{characterization of isometries}).
\begin{prop} \label{prop:composition-formula}
   $PQ = \int_{z \in I} ( (q^{\dagger})_z  \otimes p_z ) dz$.
\end{prop}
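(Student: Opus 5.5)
We test both sides against an arbitrary bounded measurable (or continuous) $f$ on $I \times I$. Since a probability measure on $I\times I$ is determined by its integrals against all $f \in C(I \times I)$, it suffices to show
\[
    \int f \, d(PQ) = \int_{z \in I} \int_{x \in I}\int_{y \in I} f(x,y)\, dp_z(y)\, d(q^\dagger)_z(x)\, dz .
\]

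First we unwind the left side using \cref{def:composition}. The Markov kernel of $PQ$ is $p \circ q$, and $(p\circ q)_x = \int p_z \, d(q_x)(z)$. Hence
\[
    \int f\, d(PQ) = \int_{x\in I} \int_{y \in I} f(x,y)\, d(p\circ q)_x(y)\, dx = \int_{x\in I}\int_{z\in I}\left(\int_{y\in I} f(x,y)\, dp_z(y)\right) dq_x(z)\, dx .
\]
The second equality is the defining property of the integral of measures from \cref{sec:notation}, applied for fixed $x$ to the bounded measurable function $y\mapsto f(x,y)$.

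Next, set $g(x,z) := \int_{y} f(x,y)\, dp_z(y)$. This is bounded, and it is jointly measurable in $(x,z)$, which is the one technical point. It holds for products $f = \ind_A(x)\ind_B(y)$ because $z \mapsto p_z(B)$ is measurable, and then extends to all bounded measurable $f$ by a monotone class argument. The expression above is then $\int_x \int_z g(x,z)\, dq_x(z)\, dx = \int g \, dQ$.

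Finally, we apply the adjoint relation $dQ(x,z) = dq_x(z)\,dx = d(q^\dagger)_z(x)\, dz$ derived just above, with the roles of the variables renamed. This gives
\[
    \int g\, dQ = \int_z\int_x g(x,z)\, d(q^\dagger)_z(x)\, dz = \int_z \int_x\int_y f(x,y)\, dp_z(y)\, d(q^\dagger)_z(x)\, dz .
\]
The inner double integral $\int_x\int_y f(x,y)\,dp_z(y)\,d(q^\dagger)_z(x)$ is exactly $\int f \, d((q^\dagger)_z\otimes p_z)$ by Fubini for the product measure. This matches the right side of the proposition, and the proof would be complete.

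The main obstacle is only measure-theoretic bookkeeping: the joint measurability of $g$ and the validity of the adjoint identity for general bounded measurable integrands rather than just continuous ones. The paper states that identity for an arbitrary $f$ via pushforward, so it applies directly to $g$.
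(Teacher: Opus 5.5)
Your proof is correct and follows the paper's argument essentially line for line. You test against $f$, expand $(p\circ q)_x=\int p_z\,dq_x(z)$, swap the $x$ and $z$ integrations using $dQ(x,z)=dq_x(z)\,dx=d(q^\dagger)_z(x)\,dz$, and recognize the inner integral as integration against $(q^\dagger)_z\otimes p_z$; your monotone-class remark on the joint measurability of $g(x,z)=\int f(x,y)\,dp_z(y)$ fills in a detail the paper leaves implicit.
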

\begin{proof}
   For a test function $f \in C(I \times I)$, we have
   \begin{align*} 
      \int_{I \times I} f(x,y) d(PQ)(x,y) &=
      \int_{x \in I} \int_{y \in I} f(x,y) d(p \circ q)_x(y) dx \\
      &= \int_{x \in I} \int_{y \in I} f(x,y) d\left( \int_{z \in I} p_z dq_x(z) \right)(y) dx \\
      &= \int_{x \in I} \int_{z \in I} \left(\int_{y \in I} f(x,y) dp_z(y)\right) dq_x(z) dx \\
      &= \int_{z \in I} \int_{x \in I} \left(\int_{y \in I} f(x,y) dp_z(y)\right) d(q^\dagger)_z(x) dz \\
      &= \int_{z \in I} \left[ \int_{(x,y) \in I \times I} f(x,y) d\left( (q^\dagger)_z \otimes p_z \right)(x,y) \right] dz \\
      &= \int_{(x,y) \in I \times I} f(x,y) d\left( \int_{z \in I} (q^\dagger)_z \otimes p_z dz \right)(x,y),
   \end{align*}
   as desired.
\end{proof}

We note that one can use \cref{prop:composition-formula}
to give a quick proof of the expected relation
    $(PQ)^\dagger = Q^\dagger P^\dagger$,
but we leave this as an exercise,
since this will also follow from
our correspondence with operators.

%Our notation and vocabulary strongly suggest attempting to prove the relation $(PQ)^\dagger = Q^\dagger \circ P^\dagger$.

%\begin{prop}
%   $(PQ)^\dagger = Q^\dagger \circ P^\dagger$.
%\end{prop}
%\begin{proof}
%   If we use $\cdot^\dagger$ denote the pushforward of \emph{any} measure $\cdot$ on $I \times I$ by $(x,y) \mapsto (y,x)$,
%   using linearity of pushforward, the fact that for $\mu, \nu \in \Prob(I)$, $(\mu \otimes \nu)^\dagger = \nu \otimes \mu$, and the previous proposition, we have
%   \begin{align*}
%      (PQ)^\dagger &=
%      \left( \int_{z \in I} (q^\dagger)_z \otimes p_z dz \right)^\dagger \\
%      &= \int_{z \in I} \left[(q^\dagger)_z \otimes p_z\right]^\dagger dz \\
%      &= \int_{z \in I} p_z \otimes (q^\dagger)_z dz \\
%      &= Q^\dagger \circ P^\dagger.
%   \end{align*}
%   In the last line we used the previous proposition again, together with $(P^\dagger)^\dagger = P$.
%\end{proof}

\section{Permutons as linear operators} \label{sec:markov operators}

We come to the second key correspondence we use:
permutons can be faithfully encoded as particular kinds of linear operators. This perspective will allow us to use insights from functional analysis, which will be key to our main theorems.

For each permuton $P \in \Perm$,
we define a linear operator $\widehat{P}$
as follows.
Given a permuton $P$ and a bounded measurable function $f: I \to \C$, we define a measurable function $\widehat{P}f: I \to \C$ by
\begin{equation} \label{eq:operator definition}
   (\widehat{P}f)(x) = \int_{y \in I} f(y) d(p^\dagger)_x(y).
\end{equation}
$Pf$ is well-defined up to $\mathcal{L}$-null-sets, since $x \mapsto (p^{\dagger})_x$ is.
The map $f \mapsto \widehat{P}f$ is clearly linear.
The fact that $f = f'$ $\mathcal{L}$-a.s. implies that $Pf = Pf'$ $\mathcal{L}$-a.s. will follow from part (1) of the
following proposition,
%The following proposition 
which collects together 
several facts about $\widehat{P}$
which we will use.
\begin{prop} \label{basic operator facts}
    \begin{enumerate}
    \item For any permuton $P$ and
   any bounded measurable function $f:I \to \C$ and any $p \in [1,\infty]$, we have $\| \widehat{P}f \|_p \le \|f\|_p$,
   where $\| f \|_p := \left( \int_I |f(x)|^p dx \right)^{1/p}$ is the $L^p$ norm with respect to the measure $\mathcal{L}$.
   In particular, $\widehat{P}$ defines a bounded linear operator on each $L^p(I, \mathcal{L})$,
   with operator norm equal to $1$.
   \item For any $f,g \in L^2(I,\mathcal{L})$,
   we have
    \[
      \int_{I \times I} f(x) \overline{g(y)} dP(x,y) = \langle \widehat{P}f, g \rangle,
   \]
   where $\langle \cdot, \cdot \rangle$ denotes the usual inner product on $L^2(I, \mathcal{L})$.
   \item For any $p \in [1,\infty]$,
   the map
   $\Perm \to \mathcal{B}(L^p(I,\mathcal{L})), P \mapsto \widehat{P}$ is injective.
   \item For any $p \in [1,\infty]$,
   the map $\Perm \to \mathcal{B}(L^p(I,\mathcal{L})), P \mapsto \widehat{P}$ is an affine semigroup homomorphism which takes adjoints to adjoints. That is, for any $P, Q \in \Perm$ and $s,t > 0, s+t=1$, we have
   \begin{enumerate}
       \item $\widehat{(sP + tQ)} = s \widehat{P} + t\widehat{Q}$,
       \item $\widehat{(P^\dagger)} = \widehat{P}^\dagger$,
       \item $\widehat{PQ} = \widehat{P} \widehat{Q}$.
   \end{enumerate}
   \end{enumerate}
\end{prop}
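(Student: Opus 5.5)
The plan is to prove the four parts in order, working throughout with the disintegrations $p$ and $p^\dagger$ and the identities $dP(x,y) = dp_x(y)\,dx = d(p^\dagger)_y(x)\,dy$ and $dP^\dagger(x,y) = d(p^\dagger)_x(y)\,dx$.

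\emph{Part (1).} For $p<\infty$, since $(p^\dagger)_x$ is a probability measure, Jensen's inequality gives $|\widehat{P}f(x)|^p \le \int |f(y)|^p\, d(p^\dagger)_x(y)$. Integrating in $x$ yields
\[
\int\int |f(y)|^p\, d(p^\dagger)_x(y)\,dx = \int |f(y)|^p\, dP^\dagger(x,y) = \int |f|^p\, dy,
\]
because the second marginal of $P^\dagger$ is $\mathcal{L}$. The case $p=\infty$ is immediate. This bound shows that a.e.-equal $f$ give a.e.-equal $\widehat{P}f$, so $\widehat{P}$ extends by density of bounded functions to a contraction on each $L^p$. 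The norm equals $1$ because $\widehat{P}\mathbf{1} = \mathbf{1}$.

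\emph{Part (2).} For bounded $f,g$,
\[
\langle \widehat{P}f, g\rangle = \int\int f(y)\,\overline{g(x)}\, d(p^\dagger)_x(y)\,dx = \int f(y)\,\overline{g(x)}\, dP^\dagger(x,y) = \int f(x)\,\overline{g(y)}\, dP(x,y).
\]
This extends to $L^2$ by continuity: both sides are continuous in $L^2\times L^2$, using Cauchy--Schwarz together with the uniform marginals, $\int |f(x)g(y)|\,dP \le \|f\|_2\|g\|_2$.

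\emph{Part (3).} If $\widehat{P} = \widehat{Q}$ on $L^p$, apply part (2) to indicator functions, which lie in every $L^p$ (and in $L^2$), with $f=\ind_A$ and $g=\ind_B$. Then $P(A\times B) = Q(A\times B)$ for all rectangles, and a $\pi$-$\lambda$ argument gives $P=Q$.

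\emph{Part (4).}
\begin{enumerate}
\item[(a)] Affinity: $(sP+tQ)$ has kernel $sp^\dagger + tq^\dagger$ a.e. by uniqueness of disintegrations, so linearity in the kernel gives the claim. Alternatively, the bilinear form in (2) is affine in $P$, and this suffices on $L^2$ and hence, by density, on all $L^p$.
\item[(b)] Adjoints: from (2), $\langle \widehat{P^\dagger}f, g\rangle = \int f(y)\,\overline{g(x)}\, dP = \langle f, \widehat{P}g\rangle$. For general $p$ I interpret $\widehat{P}^\dagger$ as the Banach adjoint via the pairing $\int f g$, which is the same computation.
\item[(c)] Homomorphism: I expect this to be the main step. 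Since $(PQ)^\dagger$ is needed, I use the formula in \cref{prop:composition-formula}, $PQ = \int_z (q^\dagger)_z \otimes p_z\,dz$. Then
\[
\langle \widehat{PQ}f, g\rangle = \int_z \Big(\int f\, d(q^\dagger)_z\Big)\Big(\int \overline{g}\, dp_z\Big)\, dz = \int_z (\widehat{Q}f)(z)\,\overline{\int g\, dp_z}\, dz.
\]
The second factor is $\overline{(\widehat{P^\dagger}g)(z)}$, because the kernel defining $\widehat{P^\dagger}$ is $(p^{\dagger})^\dagger = p$ a.e. Hence the right-hand side equals $\langle \widehat{Q}f, \widehat{P}^\dagger g\rangle = \langle \widehat{P}\widehat{Q}f, g\rangle$ by (b), and injectivity of the pairing gives the identity on $L^2$. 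It extends to $L^p$ by density of bounded functions and continuity from part (1).
\end{enumerate}
The only care needed in (c) is measurability and Fubini justifications, which hold since all functions involved are bounded and the measures are probability measures.
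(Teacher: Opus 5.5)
Your proposal is correct. Parts (1)--(3), (4a) and (4b) essentially match the paper's appendix argument: Jensen plus stationarity of $\mathcal{L}$ for $p^\dagger$, the swap $d(p^\dagger)_x(y)\,dx = dp_y(x)\,dy$, indicators with a $\pi$-system argument, and the pairing computation for adjoints. You are also a bit more careful than the paper about extending from bounded functions to $L^2$.

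The genuine difference is in (4c).
\begin{itemize}
\item \emph{Paper's route.} It first proves \cref{takes action to action}, i.e.\ $(\widehat{P}f)(x)\,dx = p \circ [f(x)\,dx]$ for probability densities $f$. The homomorphism property is then just associativity of kernel composition, $(p\circ q)\circ[f\,dx] = p\circ(q\circ [f\,dx])$. A general $f$ is handled by splitting it into positive and negative (and real and imaginary) parts.
\item \emph{Your route.} You compute the bilinear form of $\widehat{PQ}$ via item (2) and \cref{prop:composition-formula}. This factors $\langle \widehat{PQ}f,g\rangle$ as $\int_z (\widehat{Q}f)(z)\,\overline{(\widehat{P^\dagger}g)(z)}\,dz$, and you finish with (4b).
\end{itemize}

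Your argument is a short, symmetric computation that reuses the convolution-type formula the paper already has. It does lean on Hilbert-space structure, though: it needs (4b), the identification of the kernel of $(P^\dagger)^\dagger$ with $p$, and a density-plus-contractivity step to pass from $L^2$ to $L^p$ for $p<2$.

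The paper's route works natively on $L^1$ densities, and hence on every $L^p\subset L^1$ at once, with no duality or adjoints. It also gives a conceptual reason why $p^\dagger$ appears in the definition of $\widehat{P}$: the operator is exactly the Markov kernel's evolution of absolutely continuous distributions, read at the level of densities.
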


Item (1) of \cref{basic operator facts} is 
almost immediate from Jensen's inequality
and the fact that $\mathcal{L}$
is a stationary measure for any permuton $P$.
Most of the rest of the items are relatively straightforward computations, but we provide proofs in the appendix for completeness.

\begin{remark}
    We could equivalently define $\widehat{P}$
    to be the unique operator
    satisfying the equation in item (2)
    of \cref{basic operator facts}.
\end{remark}

\begin{remark}
    Item (3) of \cref{basic operator facts}
    tells us that we can identify $\Perm$
    with a subset of $\mathcal{B}(L^p(I,\mathcal{L}))$.
    Item (4) tells us
    that this identification
    respects convex combinations,
    composition, and adjoints.
    The next proposition
    will tell us
    that we can also identify
    $\Perm$ \emph{as a topological space}
    with a subspace of $\mathcal{B}(L^p(I,\mathcal{L}))$
    with the \emph{weak operator topology}.
\end{remark}

\begin{remark}
    The reader may find the appearance
    of $p^\dagger$ (as opposed to $p$)
    in the definition \eqref{eq:operator definition} a bit strange.
    Indeed, in making this definition, we have adopted a convention
    opposite (or rather, adjoint) to 
    that of most texts on Markov processes.
    We do this primarily
    so that we have the relationship
    $\widehat{PQ} = \widehat{P} \widehat{Q}$
    rather than $\widehat{PQ} =  \widehat{Q}\widehat{P}$.
\end{remark}

\begin{remark}
   It is known that the image
   of $\Perm$ under the map $P \mapsto \widehat{P}$ is precisely the set
   of \emph{Markov operators}
   which have Lebesgue measure as a stationary measure,
   that is, operators satisfying:
   \begin{itemize}
      \item The operator norm of $T : L^p(I, \mathcal{L}) \to L^p(I, \mathcal{L})$ is 1.
      \item $T$ is positive in the sense that if $f \ge 0$, then $Tf \ge 0$.
      \item $T1 = 1$.
   \end{itemize}
   See for instance \cite{Vershik83}
   for a discussion of this.
   Thus, one can think of permutons
   as Markov operators without passing
   through Markov kernels;
   we include the Markov kernel perspective here because we find it more intuitive 
   for some purposes.
\end{remark} 

The following proposition is also well-known in some contexts, and follows from items (1) and (2) of \cref{basic operator facts} together with relatively straightforward approximation arguments;
a proof is provided in the appendix for completeness.
We emphasize that the argument
\emph{does} use the fact that all elements
of $\Perm$ have the same marginals;
if instead of $\Perm$
we were considering all of $\Prob(I \times I)$,
we would not have such a strong result.
Recall the definition of the \emph{weak operator topology}:
\begin{definition}
    Let $X, Y$ be Banach spaces
    and let $T_n, T: X \to Y$
    be bounded linear operators.
    We say that $T_n \to T$
    in the \emph{weak operator topology}
    if for every $f \in X$
    and every continuous linear functional $\phi \in Y^*$ we have
    \[
        \phi(T_n f) \to \phi(Tf) 
    \]
    as $n \to \infty$.\footnote{An attentive reader may be concerned that we have only defined convergence of \emph{sequences} rather than \emph{nets}. In fact, $\Perm$
    is separable and metrizable in its usual topology (see e.g. \cite{Hoppen2013}),
    and so to describe its topology
    it suffices to describe convergence of sequences.}
\end{definition}

\begin{prop}[The usual topology on permutons is the weak operator topology] \label{characterization of weak convergence}
   Let $P_n$ be a sequence of permutons, $P \in \mathrm{Perm}$. Let $\widehat{P}_n$ and $\widehat{P}$ be the associated linear operators. Then the following are equivalent:
   \begin{enumerate}
      \item $P_n \to P$ in the weak-* topology on $C(I \times I)^*$
      (that is, the usual topology on $\Perm$).
      \item For every $p \in (1, \infty)$, $\widehat{P}_n \to \widehat{P}$ in the weak operator topology on $L^p(I, \mathcal{L})$.
      \item For some $p \in [1, \infty]$, $\widehat{P}_n \to \widehat{P}$ in the weak operator topology on $L^p(I, \mathcal{L})$.
      \item For every Borel $A, B \subset I$
    we have
    $P_n(A \times B) \to P(A \times B)$.
   \end{enumerate}
\end{prop}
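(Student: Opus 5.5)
We prove the cycle $(1)\Rightarrow(4)\Rightarrow(2)\Rightarrow(3)\Rightarrow(1)$. The engine is item (2) of \cref{basic operator facts}, which reads
\[
\int f(x)\overline{g(y)}\,dP(x,y) = \langle \widehat{P}f, g\rangle .
\]
It turns every matrix coefficient of $\widehat{P}$ into an integral of a product function against $P$. The main tool is the uniformity of the marginals: for any permutons $P,P'$ and bounded $f,g,f',g'$,
\[
\left|\int f\bar g\, dP - \int f'\bar g'\, dP'\right| \le \|f-f'\|_{L^1}\|g\|_\infty + \|f'\|_\infty\|g-g'\|_{L^1} + \left|\int f'\bar g'\,d(P-P')\right| .
\]
This holds because $\int |f-f'|(x)\,dP = \|f-f'\|_{L^1(\mathcal{L})}$, the first marginal being $\mathcal{L}$, and similarly for the second coordinate. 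So product test functions can be replaced by $L^1$-approximations at a cost that is uniform over all of $\Perm$.

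\textbf{$(1)\Rightarrow(4)$.} Fix Borel $A,B$ and $\varepsilon>0$. Choose continuous $f,g$ with values in $[0,1]$ such that $\|f-\ind_A\|_{L^1}<\varepsilon$ and $\|g-\ind_B\|_{L^1}<\varepsilon$; this is possible by regularity of $\mathcal{L}$ (Lusin). The function $f(x)g(y)$ is continuous on $I^2$, so $\int fg\,dP_n\to\int fg\,dP$. By the estimate above, $|P_n(A\times B)-\int fg\,dP_n|<2\varepsilon$ for all $n$, and likewise for $P$. Hence $\limsup_n|P_n(A\times B)-P(A\times B)|\le 4\varepsilon$.

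\textbf{$(4)\Rightarrow(2)$.} Fix $p\in(1,\infty)$, $f\in L^p$ and $g\in L^{p'}=(L^p)^*$. By (4) and sesquilinearity, $\langle \widehat{P}_n f,g\rangle\to\langle\widehat{P}f,g\rangle$ whenever $f,g$ are simple functions. For general $f,g$, pick simple $f',g'$ with $\|f-f'\|_p$ and $\|g-g'\|_{p'}$ small, and use the uniform bound $\|\widehat{P}_n\|_{L^p\to L^p}\le 1$ from item (1):
\[
|\langle \widehat{P}_n f,g\rangle-\langle \widehat{P}_nf',g'\rangle|\le \|f-f'\|_p\|g\|_{p'}+\|f'\|_p\|g-g'\|_{p'} .
\]
The same bound holds for $P$, and a standard $3\varepsilon$ argument finishes. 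Here item (2) of \cref{basic operator facts} extends from $L^2$ to the $L^p$--$L^{p'}$ pairing; this is immediate for bounded functions and then follows by density.

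\textbf{$(2)\Rightarrow(3)$} is trivial.

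\textbf{$(3)\Rightarrow(1)$.} Suppose WOT convergence holds on $L^p$ for some $p\in[1,\infty]$. Every pair of bounded functions $f,g$ gives an allowed test: $f\in L^p$ and $\bar g\in L^\infty\subset (L^p)^*$, including the case $p=\infty$, since $\int h\bar g$ is a continuous functional on $L^\infty$. Thus $\int f(x)\overline{g(y)}\,dP_n\to\int f\bar g\,dP$ for all bounded measurable $f,g$, and in particular for continuous ones. Linear combinations of products $f(x)g(y)$ with $f,g\in C(I)$ are uniformly dense in $C(I^2)$ by Stone--Weierstrass. All measures involved are probability measures, so the total variation is uniformly bounded by $1$, and uniform density upgrades this to $\int h\,dP_n\to\int h\,dP$ for all $h\in C(I^2)$.

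The main obstacle is $(1)\Rightarrow(4)$. Weak-* convergence normally gives nothing for indicators of arbitrary Borel rectangles, since $P$ may charge their boundaries. The argument succeeds only because the shared uniform marginals make the $L^1(\mathcal{L})$-approximation errors uniform in $n$, which is exactly the point the text emphasizes. The remaining steps are routine density arguments.
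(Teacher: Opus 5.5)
Your proof is correct. It differs from the paper's mainly in how it reaches the Borel rectangles.

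\textbf{The paper's route.} It proves $(1)\Rightarrow(2)$ directly. For $p\in(1,\infty)$ it approximates $f\in L^p$ and $g\in L^q$ by continuous functions, and controls the error with H\"older and the uniform bound $\|\widehat{P}_n\|_{L^p\to L^p}\le 1$. It then gets $(3)\Rightarrow(4)$ for free from $P(A\times B)=\pair{\widehat{P}\ind_A}{\ind_B}$. It leaves $(4)\Rightarrow(3)$ as ``a standard approximation argument.'' Your $(3)\Rightarrow(1)$ matches the paper's Stone--Weierstrass step.

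\textbf{Your route.} You pass from $(1)$ to $(2)$ through $(4)$. First you prove $(1)\Rightarrow(4)$ by $L^1$-approximating indicators with continuous functions. Then you prove $(4)\Rightarrow(2)$ via simple functions.

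\textbf{Same mechanism.} Your marginal estimate $\bigl|\int (f-f')(x)\overline{g(y)}\,dP\bigr|\le \|f-f'\|_{L^1}\|g\|_\infty$ is the $L^1$--$L^\infty$ case of the paper's operator-norm bound. The left side equals $|\pair{\widehat{P}(f-f')}{g}|$, and $\widehat{P}$ is an $L^1$-contraction.

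\textbf{What each route buys.} Yours supplies the approximation argument the paper leaves implicit for $(4)\Rightarrow(2)$/$(3)$. It also shows cleanly why Borel rectangles, unlike arbitrary Borel subsets of $I^2$, are handled well by weak convergence. The paper's route makes $(3)\Rightarrow(4)$ a one-liner, and its $(1)\Leftrightarrow(2)$ equivalence never needs to pass through indicator functions.
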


Note that condition (4) in \cref{characterization of weak convergence} is \emph{stronger} than weak convergence if we consider all of $\Prob(I \times I)$ rather than $\Perm$.
Note also that weak-* convergence of permutons does \emph{not} imply $P_n(S) \to P(S)$
for an arbitrary Borel $S \subset I \times I$;
a counterexample can be given by considering
the identity permutations $\mathrm{id}_n \in S_n$,
noting that 
$P_{\mathrm{id}_n} \to \Delta$, 
and considering the set $S = \{(x,x) : x \in I\} \subset I \times I$.

As we noted in the introduction,
composition of permutons is not jointly continuous with respect to the usual (weak) topology.
However, the equivalence we have established with the weak operator
topology quickly gives us the following:
\begin{prop} \label{continuity fixing one component}
    For any fixed $P \in \Perm$,
    the maps
    \[
        \Perm \to \Perm, Q \mapsto PQ,
        \qquad
        \Perm \to \Perm, Q \mapsto QP
    \]
    are continuous with respect to the usual
    topology on $\Perm$. That is,
    all left- and right- composition
    maps on $\Perm$ are continuous.
\end{prop}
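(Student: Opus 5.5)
The plan is to transfer the problem to operators using \cref{characterization of weak convergence} and \cref{basic operator facts}. Weak convergence in $\Perm$ will be identified with weak operator convergence on $L^2(I,\mathcal{L})$. Composition of permutons will be identified with composition of operators. One-sided multiplication by a fixed bounded operator is continuous in the weak operator topology, and the argument reduces to this standard fact.

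Fix $P \in \Perm$ and let $Q_n \to Q$ in $\Perm$. Since $\Perm$ is metrizable, checking sequences suffices. By \cref{characterization of weak convergence} (taking $p=2$, which lies in $(1,\infty)$), we have $\widehat{Q}_n \to \widehat{Q}$ in the weak operator topology on $L^2(I,\mathcal{L})$. By the Riesz representation theorem, this means $\langle \widehat{Q}_n f, g\rangle \to \langle \widehat{Q} f, g\rangle$ for all $f,g \in L^2$.

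\textbf{Left composition.} By item (4)(c) of \cref{basic operator facts}, $\widehat{PQ_n} = \widehat{P}\widehat{Q}_n$. By item (4)(b), $\widehat{P}^\dagger = \widehat{P^\dagger}$ is a bounded operator on $L^2$; it is a contraction by item (1). Hence for $f,g \in L^2$,
\[
\langle \widehat{P}\widehat{Q}_n f, g\rangle = \langle \widehat{Q}_n f, \widehat{P}^\dagger g\rangle \to \langle \widehat{Q} f, \widehat{P}^\dagger g\rangle = \langle \widehat{P}\widehat{Q} f, g\rangle .
\]
Thus $\widehat{PQ_n} \to \widehat{PQ}$ in the weak operator topology. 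Applying \cref{characterization of weak convergence} in the reverse direction (condition (3) implies condition (1)) gives $PQ_n \to PQ$.

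\textbf{Right composition.} Here $\widehat{Q_nP} = \widehat{Q}_n\widehat{P}$. Setting $h = \widehat{P}f \in L^2$, we get
\[
\langle \widehat{Q}_n \widehat{P} f, g\rangle = \langle \widehat{Q}_n h, g\rangle \to \langle \widehat{Q} h, g\rangle ,
\]
so $Q_nP \to QP$ by the same argument.

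The only point needing care is that \cref{characterization of weak convergence} and \cref{basic operator facts} apply to $L^2$ and that the adjoint there is the Hilbert-space adjoint. Both are given by the cited results, so there is no real obstacle; the substance lies in the earlier identification of the topology.

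Alternatively, right continuity can be deduced from left continuity. One checks that the transpose map $Q \mapsto Q^\dagger$ is a homeomorphism, being a pushforward by a continuous involution, and that $(QP)^\dagger = P^\dagger Q^\dagger$, which follows from injectivity (item (3)) together with item (4).
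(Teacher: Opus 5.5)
Your proof is correct and is essentially the paper's argument: pass to the weak operator topology on $L^2$ via \cref{characterization of weak convergence}, then test left composition against the pair $(f, \widehat{P}^\dagger g)$ and right composition against $(\widehat{P}f, g)$. Your alternative derivation of right continuity via the transpose (a homeomorphism, with $(QP)^\dagger = P^\dagger Q^\dagger$ by injectivity) is also valid, though not needed.
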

\begin{proof}
    Suppose $Q_n \to Q$ in $\Perm$.
    By \cref{characterization of weak convergence}
    we then have that $\widehat{Q}_n \to \widehat{Q}$ in the weak operator topology.
    Therefore, for any $f,g \in L^2(I,\mathcal{L})$,
    applying weak operator convergence
    to the fixed functions $f$ and $P^\dagger g$
    yields
    \[
        \pair{\widehat{PQ_n} f}{g}
        = \pair{\widehat{Q}_n f}{\widehat{P}^\dagger g}
        \to 
        \pair{\widehat{Q}f}{\widehat{P}^\dagger g}
        = \pair{ \widehat{PQ}f}{g},
    \]
    so $\widehat{P_n Q_n} \to \widehat{PQ}$
    in the weak operator topology,
    hence $PQ_n \to PQ$.
    On the other hand, applying
    weak operator convergence to the fixed functions
    $\widehat{P}f$ and $g$ yields
    \[
        \pair{\widehat{Q_nP}f}{g}
        = \pair{\widehat{Q_n}\widehat{P}f}{g}
        \to \pair{\widehat{Q}\widehat{P}f}{g}
        = \pair{\widehat{QP}f}{g},
    \]
    so $Q_n P \to QP$.
\end{proof}
\cref{continuity fixing one component} will not directly contribute to the proof of \cref{main}.
However, it will be useful 
when we show ``mixing'' for certain
families of permutons (see \cref{sec:mixing}),
which we use for \cref{application}.

\section{Strong convergence of permutons and permutations} \label{sec:strong convergence}

In the introduction we highlighted the fact that there can be no
composition operation $\Perm \times \Perm \to \Perm$ which extends the usual
composition of permutations and which is jointly continuous with respect to the usual topology on permutons. We would
like to find a topology which gives composition better continuity properties.
We saw in the last section that the usual topology on permutons is equivalent to the weak operator topology on their corresponding operators, which suggests exploring other
popular topologies on linear operators.
Perhaps the most familiar topology comes from the \emph{operator norm},
but for our purposes a much weaker topology
known as the \emph{strong operator topology}
will prove more useful.

Recall the following definition from functional analysis:
\begin{definition}
    Let $T_n, T$ be bounded linear operators on $L^2(I, \mathcal{L})$.
    We say that $T_n$ converges to $T$ in the \emph{strong operator topology}
    if for any $f \in L^2(I, \mathcal{L})$
    we have $\|T_nf - Tf\|_2 \to 0$
    as $n \to \infty$.
\end{definition}
This inspires the following definition:
\begin{definition}
    Let $P_n, P \in \mathrm{Perm}$.
    We say that $P_n \to P$ in the \emph{strong topology}
    if 
    $\widehat{P}_n \to \widehat{P}$
    in the strong operator topology.\footnote{Again,
    $\Perm$ is a separable metrizable space when endowed with the strong topology (see e.g. \cite{StrongMetric11}, or note
    that this follows from a general fact about norm-bounded sets of operators on a separable Hilbert space). Therefore, its topology is completely characterized by its convergent sequences.}
\end{definition}

Straightforward functional analysis arguments do show that composition behaves more nicely with respect to the strong topology,
and we take advantage of this in the next
section in order to prove \cref{main}.
The remainder of this section
is dedicated
%we consider strong convergence of \emph{permutations}, %In this case, strong convergence has disadvantages and advantages. 
%On the one hand, we will find that if a sequence of \emph{permutations} converges strongly to a permuton $P$,
%then $\widehat{P}$ must be an isometry. In particular,
%not every permuton can be exhibited as a strong limit of permutations, in contrast with the weak topology (see e.g. Lemma 2.1.3 of \cite{BorgaThesis}).
%On the other hand, we will find that 
%the strong and weak topologies \emph{coincide} when restricted to the strong closure of the image of permutations inside $\Perm$. That is, strong convergence of a permutation is equivalent to weak convergence, plus the condition that the limit permuton is an isometry.
to establishing a very neat characterization of strong convergence of \emph{permutations}. Namely, a sequence of permutations converges strongly to a permuton $P$
if and only if the sequence converges weakly to $P$
and $\widehat{P}$ is an \emph{isometry}.

%However, we do have a nice characterization of the set of strong limit points of permutations;
%moreover, we will see that if a sequence of permutations converges weakly
%to a point in this limit set, it necessarily also converges strongly.

%The limit set will happen to coincide with the set of permutons which are \emph{isometries};

To this end, let us first characterize the set of isometries inside $\Perm$:\footnote{This fact does appear to be known in the literature in various contexts. Some form of it appears in Brown \cite{Brown66}.
See Section 13.2 of \cite{Eisner2015} for a similar theorem regarding the closely related concept of \emph{Markov embeddings}.
Keep in mind the caveat that our definition of the associated operator $\widehat{P}$ is the adjoint 
of the more common definition.
The proof we present here is measure-theoretic
and we have not found a similar proof in the literature, although one may well exist.}
\begin{prop} \label{characterization of isometries}
Let $P \in \mathrm{Perm}$. The following are equivalent.
   \begin{enumerate}
      \item $P$ has a left inverse, i.e. there exists $Q \in \mathrm{Perm}$ such that $QP = \Delta$.
      \item For $\mathcal{L}$-a.e. $x \in I$, $(p^\dagger)_x$ is a Dirac measure
      (i.e. it is supported on a single point).
      \item $P^{\dagger} = \mathrm{gr}(F)$
      for some measure preserving
      function $F:(I, \mathcal{L}) \to (I,\mathcal{L})$.
      \item $\widehat{P}$ is an $L^2$-isometry; that is, $\widehat{P}^{\dagger} \widehat{P} = \mathrm{Id} = \widehat{\Delta}$.
   \end{enumerate}
\end{prop}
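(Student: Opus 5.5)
I will prove the cycle of implications (2)$\Leftrightarrow$(3), (3)$\Rightarrow$(1), (1)$\Rightarrow$(4), (4)$\Rightarrow$(2).

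\emph{(2)$\Leftrightarrow$(3).} If $(p^\dagger)_x=\delta_{F(x)}$ for a.e.\ $x$, then measurability of $x\mapsto (p^\dagger)_x(B)=\ind_B(F(x))$ makes $F$ measurable. The identity $P^\dagger=\int(\delta_x\otimes\delta_{F(x)})\,dx=\gr(F)$ holds, and $F$ is measure preserving because the second marginal of $P^\dagger$ is $\mathcal{L}$. Conversely, the disintegration of $\gr(F)$ is $x\mapsto\delta_{F(x)}$, and by uniqueness in \cref{disintegration} this agrees with $p^\dagger$ almost everywhere.

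\emph{(3)$\Rightarrow$(1).} Take $Q=P^\dagger=\gr(F)$. Then $\widehat{Q}\widehat{P}=\widehat{P}^\dagger\widehat{P}$, and I will show this is the identity; injectivity (\cref{basic operator facts}(3)) and the homomorphism property then give $QP=\Delta$. By \eqref{eq:operator definition}, $\widehat{P}f=f\circ F$. For $f,g\in L^2$, measure preservation of $F$ gives
\[
\langle \widehat{P}f,\widehat{P}g\rangle=\int f(F(x))\overline{g(F(x))}\,dx=\langle f,g\rangle .
\]
Hence $\widehat{P}^\dagger\widehat{P}=\mathrm{Id}=\widehat{\Delta}$, and this argument also yields (4) directly.

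\emph{(1)$\Rightarrow$(4).} If $QP=\Delta$, then $\widehat{Q}\widehat{P}=\mathrm{Id}$. Since $\|\widehat{Q}\|,\|\widehat{P}\|\le 1$ on $L^2$, we get
\[
\|f\|_2=\|\widehat{Q}\widehat{P}f\|_2\le\|\widehat{P}f\|_2\le\|f\|_2,
\]
so $\widehat{P}$ is norm preserving. By polarization, $\widehat{P}^\dagger\widehat{P}=\mathrm{Id}$.

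\emph{(4)$\Rightarrow$(2).} This is the main step, and it uses the kernel picture. Take $f\in L^2$ real, and apply Jensen's inequality fiberwise:
\[
|\widehat{P}f(x)|^2=\Big(\int f\,d(p^\dagger)_x\Big)^2\le\int f^2\,d(p^\dagger)_x .
\]
Integrating in $x$ and using stationarity, the right-hand side integrates to $\|f\|_2^2$. The isometry property forces equality, so for a.e.\ $x$ the variance of $f$ under $(p^\dagger)_x$ vanishes. Applying this to the countably many functions $f=\ind_{[0,q]}$ with $q\in\mathbb{Q}\cap I$, I conclude that for a.e.\ $x$, every $(p^\dagger)_x([0,q])$ lies in $\{0,1\}$. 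Indeed, the variance of an indicator under a measure $\mu$ is $\mu(A)(1-\mu(A))$. A probability measure on $I$ whose distribution function takes only the values $0$ and $1$ at all rationals is a Dirac mass, located at $\inf\{q:\mu([0,q])=1\}$, and this completes (2).

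The delicate points are the equality case of Jensen and the reduction to a countable family, which handles the null-set bookkeeping. Both appear manageable.
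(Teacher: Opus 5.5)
Your proof is correct, but it routes the implications differently from the paper. The paper proves (1)$\Rightarrow$(2)$\Rightarrow$(3)$\Rightarrow$(4)$\Rightarrow$(1), and its substantive step, (1)$\Rightarrow$(2), stays entirely in the kernel picture. From $QP=\Delta$ it gets $P^\dagger Q^\dagger=\Delta$. It then shows that for stationary kernels, if $\mu\circ\nu$ is a.e.\ Dirac then so is $\mu$. The reason is that $(\mu\circ\nu)_x=\int\mu_y\,d\nu_x(y)$ is a mixture equal to an extreme point of $\Prob(I)$, which forces $\mu_y$ to be Dirac for $\nu_x$-a.e.\ $y$; stationarity of $\mathcal{L}$ converts this to $\mathcal{L}$-a.e.\ $y$. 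Its (3)$\Rightarrow$(4) is then a direct computation with \cref{prop:composition-formula}.

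You instead go through the operator side first. You get the isometry from left invertibility by the contraction estimate $\|f\|_2=\|\widehat{Q}\widehat{P}f\|_2\le\|\widehat{P}f\|_2\le\|f\|_2$ plus polarization. You then recover Dirac fibers from the equality case of fiberwise Jensen, that is, from the identity $\|f\|_2^2-\|\widehat{P}f\|_2^2=\int_I \mathrm{Var}_{(p^\dagger)_x}(f)\,dx$ for real $f$. Your (3)$\Rightarrow$(1) likewise replaces the composition formula with the observation $\widehat{P}f=f\circ F$ and a change of variables.

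Your route has three advantages. It is quantitative: the isometry defect is exactly the integrated fiber variance, so an operator that nearly preserves norms has nearly Dirac fibers. It shows that norm preservation on the countably many $\ind_{[0,q]}$ already suffices. And the countable test family lets you skip the separate check, which the paper carries out, that $\{x:\mu_x\text{ is Dirac}\}$ is Borel.

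The paper's key step, by contrast, uses no Hilbert-space structure at all, only convexity (extreme points) and stationarity. This gives the convexity-based explanation of ``why'' left-invertible permutons are transposed graphs, which the author explicitly wanted to highlight.
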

\begin{proof}
   First, let us show (1) $\Rightarrow$ (2). 
   Assume (1) holds.
   Let us call a stationary Markov kernel $\mu$ 
    \emph{Dirac} if $\mu_x$ is Dirac for $\mathcal{L}$-a.e. $x$.
   Note that the Markov kernel $\delta$
   associated to the diagonal permuton $\Delta$ is Dirac. We claim that for any stationary Markov kernels $\mu, \nu$, if 
   $\mu \circ \nu$ is Dirac, then so is $\mu$. Once we have this, applying this statement 
   to  
   $\mu = p^\dagger$, $\nu = q^\dagger$ will show that $p^\dagger$ is Dirac,
   since $P^{\dagger}Q^{\dagger} = (QP)^\dagger = \Delta^{\dagger} = \Delta$.

   So let us show that $\mu \circ \nu$ Dirac $\Rightarrow$ $\mu$ Dirac.
   First note that $\{ x \in I : \mu_x \mbox{ is Dirac} \}$ is Borel in $I$.
   This is because, taking $\mathcal{D} = \{ [a,b] : a, b \in \mathbb{Q} \cap [0,1]\}$, we have
   \[
      \{ x \in I : \mu_x \mbox{ is Dirac} \} = \bigcap_{B \in \mathcal{D}} \{x \in I : \mu_x(B) \in \{0,1\} \}
   \]
   is a countable intersection of Borel sets.
   Now suppose for some $x \in I$ that $(\mu \circ \nu)_x$ is a Dirac measure,
   i.e. $(\mu \circ \nu)_x = \delta_{x_0}$ for some $x_0 \in I$.
   Note that $(\mu \circ \nu)_x = \int_{y \in I} \mu_y d\nu_x(y)$. 
   Since Dirac measures are extreme points of $\mathrm{Prob}(I)$,
   $\int_{y \in I} \mu_y d\nu_x(y) = \delta_{x_0}$ implies that $\mu_y = \delta_{x_0}$ for $\nu_x$-a.e. $y \in I$,
   and in particular $\nu_x \{ y \in I : \mu_y \mbox{ is Dirac} \} = 1$.
   Since by assumption this holds for $\mathcal{L}$-a.e. $x \in I$, we have
   \begin{align*}
      \mathcal{L} \left\{ x \in I : \mu_x \mbox{ is Dirac} \right\}
      &= \int_{x \in I} \ind(\mu_x \mbox{ is Dirac}) dx \\
      &= \int_{x \in I} \int_{y \in I} \ind(\mu_y \mbox{ is Dirac}) d\nu_x(y) dx \\
      &= \int_{x \in I} \nu_x \{ y \in I : \mu_y \mbox{ is Dirac} \} dx \\
      &= \int_{x \in I} 1 dx = 1.
   \end{align*}
   In the second line we used the fact that $\mathcal{L} = \nu \circ \mathcal{L}$.
   Thus, $\mu_x$ is $\mathcal{L}$-a.s. Dirac, and we have (1) $\Rightarrow$ (2).
   
   Now let us show (2) $\Rightarrow$ (3).
   Assume (2); we define $F: I \to I$ as follows.
   Since $p^\dagger$ is $\mathcal{L}$-a.s. Dirac, for $\mathcal{L}$-a.e. $x$ there is a unique $y \in I$
   such that $(p^{\dagger})_x = \delta_y$, and we can define $F(x) = y$.
   Then we have $P^{\dagger} = \int_{x \in I} (\delta_x \otimes \delta_{F(x)})dx = \gr(F)$ by construction.
   Moreover, $F$ is measurable since for any Borel $B \subset I$, 
   up to null sets we have $F^{-1}(B) = \{ x \in I: (p^\dagger)_x(B) = 1 \}$,
   and for any fixed $B$, $x \mapsto (p^\dagger)_x(B)$ is measurable.
   The fact that $p^\dagger \circ \mathcal{L} = \mathcal{L}$ tells us exactly
   that $F$ preserves $\mathcal{L}$, since $F_* \mathcal{L} = p^\dagger \circ \mathcal{L}$.
   
   Now let us show (3) $\Rightarrow$ (4). Assume (3), i.e. for some measure-preserving $F:I \to I$,
   we have $P^{\dagger} = \mathrm{gr}(F)$,
   that is, $(p^\dagger)_x = \delta_{F(x)}$ for $\mathcal{L}$-a.e. $x \in I$;
   we want to show 
   $P^{\dagger} P = \Delta$. 
   Recalling Proposition \ref{prop:composition-formula},
   we can compute
   \begin{align*}
      P^{\dagger} P &=
      \int_{z \in I} ((p^{\dagger})_z \otimes (p^\dagger)_z) dz 
      = \int_{z \in I} (\delta_{F(z)} \otimes \delta_{F(z)}) dz \\
      &= \int_{x \in I} (\delta_x \otimes \delta_x) d(F_* \mathcal{L})(x) 
      = \int_{x \in I} (\delta_x \otimes \delta_x) dx = \Delta,
   \end{align*}
   where in the last line we have used that $F$ preserves $\mathcal{L}$.
   
   Lastly (4) $\Rightarrow$ (1) directly by taking $Q = P^{\dagger}$.
\end{proof}
Denote by $\mathrm{End}(I, \mathcal{L})$
the set of all measure-preserving maps $F:I \to I$
(measure-preserving \emph{endomorphisms} of $(I, \mathcal{L})$).
We denote the set of permutons which satisfy any of the equivalent conditions in \cref{characterization of isometries}
by $\gr(\End)^\dagger$, since, as \cref{characterization of isometries} shows, these are exactly the image of
the map
\[
   \mathrm{End}(I, \mathcal{L}) \to \mathrm{Perm}, F \mapsto \mathrm{gr}(F)^\dagger. %\left(\int_{x \in I} (\delta_x \otimes \delta_{F(x)}) dx\right)^{\dagger}.
\]
\cref{characterization of isometries} tells us that we can equivalently think of $\gr(\End)^\dagger$
as the set of transposes of graphs of functions,
as the set of left-invertible permutons, or as the set of permutons whose associated operators are $L^2$-isometries.

We can now characterize strong convergence of permutations.

\begin{prop} \label{characterization of strong convergence}
   Let $\sigma_N \in S_{k_N}$ be a sequence of permutations with $k_N$ a sequence of integers tending to infinity.
   %Denote by $P_N \in \mathrm{Perm}$
   %the corresponding sequence of permutons.
   Then the following are equivalent:
   \begin{enumerate}
      \item $\sigma_N$ converges to a permuton in the strong topology.
      \item $\sigma_N$ converges to an element of $\gr(\End)^\dagger$ in the strong topology.
      \item $\sigma_N$ converges to an element of $\gr(\End)^\dagger$ in the weak topology (i.e., the usual permuton topology).
   \end{enumerate}
\end{prop}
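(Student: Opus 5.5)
We prove (2) $\Rightarrow$ (1) $\Rightarrow$ (2) and (2) $\Rightarrow$ (3) $\Rightarrow$ (2). The implication (2) $\Rightarrow$ (1) is trivial. For (2) $\Rightarrow$ (3), strong operator convergence implies weak operator convergence, and by \cref{characterization of weak convergence} weak operator convergence on $L^2$ is the usual permuton topology. So the real content lies in (1) $\Rightarrow$ (2) and (3) $\Rightarrow$ (2). Both rest on one observation: for a permutation $\sigma \in S_N$, the operator $\widehat{P}_\sigma$ is the conditional expectation onto the $\sigma$-algebra generated by the intervals $I_1,\dots,I_N$, followed by a rearrangement of those intervals. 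In particular $\widehat{P}_\sigma^\dagger \widehat{P}_\sigma = E_N$, where $E_N f := \sum_i \left(N\int_{I_i} f\right)\ind_{I_i}$. Since the mesh $1/k_N \to 0$, martingale convergence (or density of step functions) gives $E_{k_N} \to \mathrm{Id}$ strongly. Consequently $\|\widehat{P}_{\sigma_N} f\|_2 = \|E_{k_N} f\|_2 \to \|f\|_2$ for every $f \in L^2$.

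For (1) $\Rightarrow$ (2), suppose $\widehat{P}_{\sigma_N} \to \widehat{P}$ strongly. Then for every $f$,
\[
\|\widehat{P} f\|_2 = \lim_N \|\widehat{P}_{\sigma_N} f\|_2 = \lim_N \|E_{k_N} f\|_2 = \|f\|_2 .
\]
Thus $\widehat{P}$ is an isometry, and \cref{characterization of isometries} (condition (4)) gives $P \in \gr(\End)^\dagger$.

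For (3) $\Rightarrow$ (2), we use the standard Hilbert space fact that weak convergence together with convergence of norms implies norm convergence. Fix $f \in L^2$. Weak operator convergence gives $\widehat{P}_{\sigma_N} f \rightharpoonup \widehat{P} f$ weakly in $L^2$. Because $P \in \gr(\End)^\dagger$, $\widehat{P}$ is an isometry, so $\|\widehat{P} f\|_2 = \|f\|_2 = \lim_N \|\widehat{P}_{\sigma_N} f\|_2$. Expanding,
\[
\|\widehat{P}_{\sigma_N} f - \widehat{P} f\|_2^2 = \|\widehat{P}_{\sigma_N} f\|_2^2 - 2\,\mathrm{Re}\langle \widehat{P}_{\sigma_N} f, \widehat{P} f\rangle + \|\widehat{P} f\|_2^2 \to \|f\|^2 - 2\|f\|^2 + \|f\|^2 = 0 .
\]
This gives strong convergence.

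The main step to verify carefully is the identity $\widehat{P}_\sigma^\dagger \widehat{P}_\sigma = E_N$. We plan to check it via \cref{familiar special cases}(3), computing the convolution of the densities of $P_\sigma^\dagger = P_{\sigma^{-1}}$ and $P_\sigma$. The calculation should yield the density $N\sum_i \ind_{I_i}(x)\ind_{I_i}(y)$, whose associated operator is exactly averaging over each block $I_i$. Together with the fact that $E_N \to \mathrm{Id}$ strongly as $N \to \infty$, proved by approximating $f$ by continuous functions and using $\|E_N\| \le 1$, this completes the argument. It is precisely here that we need $k_N \to \infty$.
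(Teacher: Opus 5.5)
Your proposal is correct and is essentially the paper's proof. The paper likewise uses $P_{\sigma_N}^\dagger P_{\sigma_N} = P_{\sigma_N^{-1}}P_{\sigma_N} = P_{\mathrm{id}_{k_N}}$, whose operator is exactly your block average $E_{k_N}$. It uses the convergence of this to the identity to show that any strong limit is an isometry, phrased via $\langle \widehat{P}_N u, \widehat{P}_N v\rangle$ rather than norms, which is equivalent by polarization. It then proves (3) $\Rightarrow$ (2) by the same expansion of $\|(\widehat{P}_N - \widehat{P})v\|_2^2$.

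One small caveat: martingale convergence does not apply directly, because the partitions into $k_N$ equal intervals need not be nested. Your alternative argument (approximating by continuous functions and using $\|E_N\| \le 1$) does establish $E_{k_N} \to \mathrm{Id}$ strongly.
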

That is, permutations (on divergingly large sets) can only converge strongly to permutons which are isometries. 
But fortunately, if a sequence of permutations converges weakly to an isometry,
it also converges strongly.
\begin{proof}
   Let $\sigma_N$ be a sequence of permutations and set $P_N := P_{\sigma_N}$ to be the permuton associated to the permutation $\sigma_N$
   for each $N$.
   
   First let us show (1) $\Rightarrow$ (2).
   Suppose that for some $P_{\infty} \in \mathrm{Perm}$, $\widehat{P}_N \to \widehat{P}_{\infty}$ in the strong operator topology.
   We will show that $\widehat{P}_{\infty}$ is an isometry (and hence that $P_\infty \in \gr(\End)^\dagger$ by \cref{characterization of isometries}).
   
   To this end, let $u, v \in L^2(I, \mathcal{L})$. By Cauchy-Schwarz,
   the fact that all operators associated to permutons have operator norm $1$, and strong convergence,
      \begin{align*}
      |\langle \widehat{P}_N u, \widehat{P}_N v \rangle - \langle \widehat{P}_{\infty} u, \widehat{P}_{\infty} v \rangle|
      &\le
      |\langle (\widehat{P}_N - \widehat{P}_{\infty}) u, P_N v \rangle| + |\langle \widehat{P}_{\infty} u, (\widehat{P}_N - P_\infty) v \rangle| \\
      &\le
      \|(\widehat{P}_N - \widehat{P}_{\infty}) u\| \|v\| + \| u \| \|(\widehat{P}_N - \widehat{P}_\infty) v \|
      \to 0
   \end{align*}
   as $N \to \infty$, and therefore we have
   \[ \lim_{N \to \infty} \langle \widehat{P}_N u, \widehat{P}_N v \rangle = \langle \widehat{P}_{\infty} u, \widehat{P}_{\infty} v \rangle. \]
   
   Next, since each $P_N$ comes from a permutation, 
   one can check that $P_N^\dagger = (P_{\sigma_N})^\dagger = P_{\sigma_N^{-1}}$.
   By \cref{familiar special cases},
   we then have 
   $P_N^\dagger P_N = P_{\sigma_N^{-1}} P_{\sigma_N} = P_{\mathrm{id}_{k_N}}$, where $\mathrm{id}_{k_N}$
   is the identity element of $S_{k_N}$.
   It is also straightforward to verify
   that $\widehat{P}_{\mathrm{id}_{k_N}}$
   converges to the identity operator as $k_N \to \infty$.
   From this we obtain
   \begin{align*}
      \langle \widehat{P}_N u, \widehat{P}_N v\rangle &= \langle \widehat{P}_N^{\dagger} \widehat{P}_N u, v \rangle 
      = \langle \widehat{P}_{\mathrm{id}_{k_N}} u, v \rangle 
      \to \langle u, v \rangle
   \end{align*}
   as $N \to \infty$.
   Therefore, for each $u, v \in L^2(I, \mathcal{L})$, we have
   $\langle \widehat{P}_{\infty}^{\dagger} \widehat{P}_{\infty} u, v \rangle = \langle \widehat{P}_{\infty} u, \widehat{P}_{\infty} v \rangle = \langle u, v \rangle$,
   so $\widehat{P}_{\infty}^{\dagger} \widehat{P}_{\infty} = \mathrm{Id}$ and $\widehat{P}_\infty$ is an isometry, as desired.
   
   (2) $\Rightarrow$ (1) is immediate, as is (2) $\Rightarrow$ (3). Let us
   show that (3) $\Rightarrow$ (2).
   Assume that $P_N \to P_{\infty} \in \gr(\End)^\dagger$ in the weak operator topology.
   Note that by \cref{characterization of isometries}, $P_\infty^{\dagger} P_\infty = \Delta$.
   Let $v \in L^2(I, \mathcal{L})$, and consider
   \begin{align*}
      \langle (P_N - P_{\infty}) v, (P_N - P_{\infty}) v \rangle &=
      \langle P_N v, P_N v \rangle - \langle P_{\infty} v, P_N v \rangle - \langle P_N v, P_{\infty} v \rangle + \langle P_{\infty} v, P_{\infty} v\rangle.
   \end{align*}
   By weak operator convergence, the middle two terms each converge to $- \langle P_{\infty} v, P_{\infty} v \rangle$.
   Again, since $P_N^{\dagger} P_N = P_{\mathrm{id}_N}$ converges to $I$, the first term converges to $\langle v, v \rangle$,
   and the whole expression converges to $\langle v, v \rangle - \langle P_{\infty} v, P_{\infty} v\rangle$.
   But since $P_{\infty}$ is by assumption an isometry, this is equal to zero. Thus, $P_N \to P_{\infty}$
   in the strong operator topology, as desired.
\end{proof}

Although we will not use it, let us remark that we have a similar theorem to the above if we replace the hypothesis that the $P_N$
come from permutations with the hypothesis that $P_N$ are transposes of graphs of functions:

\begin{prop} \label{characterization of strong convergence for isometries}
    Let $P_N \in \gr(\End)^\dagger$ be a sequence of permutons whose associated operators are $L^2$-isometries. Then the following are equivalent:
    \begin{enumerate}
              \item $P_N$ converges to a permuton in the strong topology.
      \item $P_N$ converges to an element of $\gr(\End)^\dagger$ in the strong topology.
      \item $P_N$ converges to an element of $\gr(\End)^\dagger$ in the weak topology (i.e., the usual permuton topology).
    \end{enumerate}
\end{prop}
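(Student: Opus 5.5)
The plan is to repeat the proof of \cref{characterization of strong convergence}, using the hypothesis $P_N \in \gr(\End)^\dagger$ wherever that proof used $P_N^\dagger P_N = P_{\mathrm{id}_{k_N}}$. By \cref{characterization of isometries}, each $P_N$ satisfies $\widehat{P}_N^\dagger \widehat{P}_N = \mathrm{Id}$ exactly. This makes the argument slightly simpler than before: there is no longer any approximation of the identity to control.

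For (1) $\Rightarrow$ (2), I will suppose $\widehat{P}_N \to \widehat{P}_\infty$ strongly. The same Cauchy--Schwarz estimate as before, using operator norm $1$ from \cref{basic operator facts}, gives
\[
\langle \widehat{P}_N u, \widehat{P}_N v\rangle \to \langle \widehat{P}_\infty u, \widehat{P}_\infty v\rangle
\]
for all $u,v \in L^2(I,\mathcal{L})$. The left side equals $\langle \widehat{P}_N^\dagger\widehat{P}_N u, v\rangle = \langle u,v\rangle$ for every $N$. Hence $\widehat{P}_\infty^\dagger \widehat{P}_\infty = \mathrm{Id}$, and \cref{characterization of isometries} gives $P_\infty \in \gr(\End)^\dagger$.

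The implications (2) $\Rightarrow$ (1) and (2) $\Rightarrow$ (3) are immediate. For (2) $\Rightarrow$ (3), strong operator convergence implies weak operator convergence, which is the usual topology by \cref{characterization of weak convergence}.

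For (3) $\Rightarrow$ (2), I will suppose $P_N \to P_\infty \in \gr(\End)^\dagger$ weakly, so that $\widehat{P}_N \to \widehat{P}_\infty$ in the weak operator topology on $L^2$ by \cref{characterization of weak convergence}. Then I expand
\[
\|(\widehat{P}_N - \widehat{P}_\infty)v\|^2 = \|\widehat{P}_N v\|^2 - 2\,\mathrm{Re}\langle \widehat{P}_N v, \widehat{P}_\infty v\rangle + \|\widehat{P}_\infty v\|^2 .
\]
The first term equals $\|v\|^2$ because $\widehat{P}_N$ is an isometry. The cross term converges to $\|\widehat{P}_\infty v\|^2 = \|v\|^2$ by weak operator convergence applied to the fixed functions $v$ and $\widehat{P}_\infty v$. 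The last term equals $\|v\|^2$ because $\widehat{P}_\infty$ is an isometry. So the whole expression tends to $0$, which is strong convergence.

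No step is a real obstacle. The only point needing care is the use of \cref{characterization of isometries} to pass between membership in $\gr(\End)^\dagger$ and the isometry identity $\widehat{P}^\dagger\widehat{P} = \mathrm{Id}$ on $L^2$, for both the sequence and the limit.
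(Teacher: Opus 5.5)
Your proposal is correct and takes the same route as the paper: it reruns the proof of \cref{characterization of strong convergence}, replacing the asymptotic identity $P_N^\dagger P_N = P_{\mathrm{id}_{k_N}} \to \Delta$ with the exact identity $P_N^\dagger P_N = \Delta$ from item (4) of \cref{characterization of isometries}. Your inner-product limit in (1) $\Rightarrow$ (2) and your norm expansion in (3) $\Rightarrow$ (2) are exactly the steps of that earlier argument.
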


Equivalently, \cref{characterization of strong convergence for isometries}
states that $\gr(\End)^\dagger$
is strongly closed, and the
weak and strong topologies coincide \emph{when restricted} to $\gr(\End)^\dagger$.\footnote{Note that however $\gr(\End)^\dagger$ is \emph{not} weakly closed in $\Perm$, nor is it compact.}

\begin{proof}
    The proof is nearly identical to that of \cref{characterization of strong convergence}, and is actually slightly more direct.
    The only difference is that,
    while in the proof of \cref{characterization of strong convergence} we used the fact that $P_N^\dagger P_N = P_{\mathrm{id}_{k_N}}\to \Delta$
    when $k_N \to \infty$,
    here, our assumptions tell us
    that $P_N^\dagger P_N = \Delta$
    for all $N$ (by item (4) of \cref{characterization of isometries}).
    Otherwise the argument is exactly the same.
\end{proof}

We can think of \cref{characterization of strong convergence for isometries} as a generalization of the following fact, proved and used by F\'{e}ray and Rivera-Lopez in \cite{Feray-Rivera-Lopez}: if $F_n,F:[0,1]\to[0,1]$
are measurable measure-preserving functions,
then $\gr(F_n) \to \gr(F)$
if and only if $F_n \to F$ in $L^2(I,\mathcal{L})$.\footnote{The statement in \cite{Feray-Rivera-Lopez} actually shows equivalence to convergence in $L^p(I, \mathcal{L})$ for all $p \in [1,\infty)$.}
To recover this fact from \cref{characterization of strong convergence for isometries}, note that by \cref{characterization of strong convergence for isometries}, $P_n := \gr(F_n) \to \gr(F)=:P$ is equivalent to $\widehat{P}_n^\dagger \to \widehat{P}^\dagger$ in the strong operator topology, that is, $\|\phi(F_n(x)) - \phi(F(x))\|_2 = \|\widehat{P}_n^\dagger \phi - \widehat{P}^\dagger \phi\|_2 \to 0$
for all $\phi \in L^2(I, \mathcal{L})$.
Taking $\phi(x) = x$ therefore gives $\|F_n(x) - F(x)\|_2 \to 0$, as claimed.

\section{Proof of \cref{main} and \cref{corollary diagonal}} \label{sec:proof of main}

\cref{main} will now follow nearly immediately once we recall the following well-known fact from functional analysis:
\begin{prop} \label{mixed weak and strong}
    Let $T_n, T, S_n, S$
    be bounded linear operators 
    on $L^2(I, \mathcal{L})$
    all with operator norm at most 1.
    If $T_n \to T$ in the weak operator topology
    and $S_n \to S$ in the strong operator topology, then $T_n S_n \to TS$
    in the weak operator topology.
\end{prop}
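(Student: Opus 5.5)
We fix $f,g \in L^2(I,\mathcal{L})$ and need $\langle T_n S_n f, g\rangle \to \langle TSf, g\rangle$. The plan is to insert the intermediate term $\langle T_n S f, g\rangle$ and split
\[
    \langle T_n S_n f, g\rangle - \langle TSf, g\rangle
    = \langle T_n (S_n - S) f, g\rangle + \langle (T_n - T) S f, g\rangle .
\]
The first term is controlled by strong convergence of $S_n$, and the second by weak convergence of $T_n$.

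For the first term, Cauchy--Schwarz and the uniform bound $\|T_n\| \le 1$ give
\[
    |\langle T_n (S_n - S) f, g\rangle| \le \|T_n\|\,\|(S_n - S)f\|_2\,\|g\|_2 \le \|(S_n-S)f\|_2\,\|g\|_2 ,
\]
which tends to $0$ because $S_n \to S$ strongly. This is where the norm bound on the $T_n$ is used. Since $L^2$ is a Hilbert space, every continuous functional has the form $\langle \cdot, g\rangle$, so it suffices to treat such pairings.

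For the second term, we apply the weak operator convergence $T_n \to T$ to the fixed vector $Sf \in L^2$ and the fixed functional $\langle \cdot, g\rangle$, which gives $\langle (T_n - T)Sf, g\rangle \to 0$. Adding the two estimates yields $T_nS_n \to TS$ in the weak operator topology.

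There is no genuine obstacle here. The only point needing care is choosing the split so that the non-strongly convergent operator $T_n$ acts on a fixed vector ($Sf$) in the weak term, while the strongly convergent factor absorbs the $n$-dependence. The alternative split through $\langle TS_nf,g\rangle$ would leave $\langle (T_n-T)S_nf,g\rangle$, where the vector moves with $n$, and this term is not obviously small. The bound $\|S\|\le 1$ is not even needed; only $\sup_n\|T_n\|<\infty$ is used.
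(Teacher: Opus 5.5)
Your proposal is correct and follows the paper's proof: the same split through $\langle T_n S f, g\rangle$, with Cauchy--Schwarz and $\|T_n\|\le 1$ controlling $\langle T_n(S_n-S)f,g\rangle$ by strong convergence, and weak operator convergence applied to the fixed vector $Sf$ handling the remaining term.
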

\begin{proof}
    Assume that $T_n \to T$
    weakly and $S_n \to S$ strongly,
    and let
    $f,g \in L^2(I, \mathcal{L})$.
    Then
    \begin{align*}
        |\pair{T_n S_n f}{g} - \pair{TSf}{g}| 
        &\le |\pair{T_n(S_n - S)f}{g}|
        + |\pair{(T_n - T)Sf}{g}| \\
        &\le \|S_n f - S f\| \|g\|
        + |\pair{T_n(Sf)}{g} - \pair{T(Sf)}{g}|,
    \end{align*}
    where in the second line we have
    used Cauchy-Schwarz as well as the uniform operator norm bound on $\|T_n\|$.
    By strong convergence of $S_n$ to $S$
    we have
    \[
        \|S_n f - S f\| \|g\| \to 0,
    \]
    and by weak convergence of $T_n$ to $T$
    we have
    \[
    |\pair{T_n(Sf)}{g} - \pair{T(Sf)}{g}| \to 0
    \]
    as $n \to \infty$,
    and so we are done.
\end{proof}

From this we quickly obtain \cref{main}:

\begin{proof}[Proof of \cref{main}]
    Suppose $\sigma_N, \tau_N$
    are sequences of permutations
    in $S_{k_N}$, with $k_N \to \infty$,
    whose corresponding
    sequences of permutons
    $P_N, Q_N$
    converge (weakly)
    to permutons $P$ and $Q$
    respectively.
    
    First, suppose
    that $Q^\dagger = \mathrm{gr}(F)$
    for some $F \in \mathrm{End}(I,\mathcal{L})$.
    Then, by \cref{characterization of strong convergence}, $Q_n \to Q$
    strongly, that is,
    $\widehat{Q}_n \to \widehat{Q}$
    in the strong operator topology.
    By \cref{characterization of weak convergence}, $\widehat{P}_N \to \widehat{P}$
    in the weak operator topology,
    and so by \cref{mixed weak and strong},
    we have
    that $\widehat{P}_N \widehat{Q}_N \to \widehat{P} \widehat{Q}$ in the weak operator topology,
    and hence $P_N Q_N \to PQ$
    in the usual (weak) topology on $\Perm$
    by \cref{characterization of weak convergence}.

    Now, assume instead that $P = \mathrm{gr}(F)$ for some $F \in \mathrm{End}(I,\mathcal{L})$.
    Note 
    that the adjoint map
    $\cdot^{\dagger}:\Perm \to \Perm$
    is weakly continuous.\footnote{Beware that the adjoint map is \emph{not}
    strongly continuous.}
    Therefore, taking adjoints
    and considering the sequences
    $Q^\dagger_N \to Q^\dagger, P^\dagger_N \to P^\dagger$,
    and $(P_N Q_N)^\dagger = Q^\dagger_N P^\dagger_N$,
    we have the same situation as above.
    Hence we have weak convergence
    \[
    (P_N Q_N)^\dagger =Q^\dagger_N P^\dagger_N \to 
    Q^\dagger P^\dagger = (PQ)^\dagger.
    \]
    But again by weak continuity of the adjoint map, from this we can conclude that
    \[
    P_N Q_N = [Q^\dagger_N P^\dagger_N]^\dagger \to [Q^\dagger P^\dagger]^\dagger = PQ,
    \]
    as desired.
\end{proof}

One should note that from the above one can immediately use induction to get the following,
where we use the notation $\gr(\End)$
for the image of the graph map $\gr:\End(I,\mathcal{L}) \to \Perm$:
\begin{cor} \label{main more complicated}
    Let $\sigma_N^{(1)},...,\sigma_N^{(k)},\tau_N,\rho_N^{(1)},...,\rho_N^{(\ell)} \in S_{k_N}$ be sequences of permutations with $k_N \to \infty$, 
    %and suppose that the corresponding sequences of permutons
    %$P_N^{(1)},...,P_N^{(k)},Q_N,R_N^{(1)},...,R_N^{(\ell)}$
    which
    have (weak) permuton limits
    $P^{(1)},...,P^{(k)},Q,R^{(1)},...,R^{(\ell)}$
    such that all $P^{(i)}, [R^{(j)}]^\dagger \in \gr(\mathrm{End})$. Then the sequence of compositions of permutations
    \[
        \sigma_N^{(1)}\cdots\sigma_N^{(k)}\tau_N\rho_N^{(1)}\cdots\rho_N^{(\ell)}
    \]
    converges (weakly) to the permuton
    \[
    P^{(1)} \cdots P^{(k)}QR^{(1)}\cdots R^{(\ell)}.
    \]
\end{cor}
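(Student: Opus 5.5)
The plan is to work at the level of operators and reduce everything to \cref{mixed weak and strong}, applied repeatedly to products. Write $P_N^{(i)}, Q_N, R_N^{(j)}$ for the permutons of $\sigma_N^{(i)},\tau_N,\rho_N^{(j)}$. By \cref{familiar special cases}(1), the composed permutation has permuton $P_N^{(1)}\cdots P_N^{(k)}Q_NR_N^{(1)}\cdots R_N^{(\ell)}$. By \cref{basic operator facts}(4), its operator is the corresponding product of the hatted operators. By \cref{characterization of weak convergence}, it suffices to show this operator product converges in the weak operator topology on $L^2$ to the product of the limit operators.

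\textbf{Right factors.} Since $[R^{(j)}]^\dagger\in\gr(\End)$, we have $R^{(j)}\in\gr(\End)^\dagger$. Hence \cref{characterization of strong convergence} upgrades $R_N^{(j)}\to R^{(j)}$ to strong convergence. Products of strongly convergent sequences of contractions converge strongly: for $\|A_n\|,\|B_n\|\le1$ with $A_n\to A$ and $B_n\to B$ strongly, we have
\[
\|A_nB_nf-ABf\|\le\|B_nf-Bf\|+\|(A_n-A)Bf\|\to0.
\]
So $S_N:=\widehat R_N^{(1)}\cdots\widehat R_N^{(\ell)}\to \widehat R^{(1)}\cdots\widehat R^{(\ell)}$ strongly. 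Combining with $\widehat Q_N\to\widehat Q$ weakly, \cref{mixed weak and strong} gives $\widehat Q_NS_N\to\widehat Q\widehat R^{(1)}\cdots\widehat R^{(\ell)}$ weakly. All operators involved have norm at most $1$.

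\textbf{Left factors.} These are handled by taking adjoints, exactly as in the proof of \cref{main}. Since $P^{(i)}\in\gr(\End)$, each $(P^{(i)})^\dagger\in\gr(\End)^\dagger$. Also $(P_N^{(i)})^\dagger$ is the permuton of $(\sigma_N^{(i)})^{-1}$, and it converges weakly to $(P^{(i)})^\dagger$ because transposition is weakly continuous. So \cref{characterization of strong convergence} gives strong convergence of $(\widehat P_N^{(i)})^\dagger$, and hence, by the estimate above, strong convergence of
\[
T_N:=(\widehat P_N^{(k)})^\dagger\cdots(\widehat P_N^{(1)})^\dagger.
\]
Now let $W_N:=\widehat Q_NS_N$. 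For $f,g\in L^2$,
\[
\langle \widehat P_N^{(1)}\cdots\widehat P_N^{(k)}W_Nf,g\rangle=\langle W_Nf,T_Ng\rangle.
\]
This converges by the adjoint form of \cref{mixed weak and strong}, namely $W_N$ weak and $T_N$ strong give $T_N^\dagger W_N$ weak:
\[
|\langle W_Nf,T_Ng\rangle-\langle Wf,Tg\rangle|\le\|f\|\|T_Ng-Tg\|+|\langle (W_N-W)f,Tg\rangle|\to0.
\]
Alternatively, one can formally induct on $k+\ell$ using \cref{main}. There one views a partial product as a single permutation sequence, noting that \cref{main} only needs one factor's limit to be in $\gr(\End)$ or $\gr(\End)^\dagger$. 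The strong-convergence bookkeeping above is what makes this induction legitimate.

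\textbf{Main obstacle.} The main thing to get right is why no limiting factor other than $Q$ may be arbitrary. The strong upgrade in \cref{characterization of strong convergence} requires the limit to lie in $\gr(\End)^\dagger$, which holds for the $R^{(j)}$ directly and for the $P^{(i)}$ only after transposing. In a naive induction the partial product $\tau_N\rho_N^{(1)}$ converges only weakly, so it cannot itself be upgraded. The fix is that the accumulated strong product $S_N$ is always paired with only a single weakly convergent factor, $Q_N$, so \cref{mixed weak and strong} applies.
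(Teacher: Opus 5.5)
Your argument is correct, but it is organized differently from the paper's. The paper simply inducts on \cref{main}. Set $A_N^{(m)}=\tau_N\rho_N^{(1)}\cdots\rho_N^{(m)}$, which is a permutation sequence in $S_{k_N}$ converging weakly, by the inductive hypothesis, to $QR^{(1)}\cdots R^{(m)}$. Then $A_N^{(m+1)}=A_N^{(m)}\rho_N^{(m+1)}$ falls under case (2) of \cref{main}, since only $R^{(m+1)}$ needs to lie in $\gr(\End)^\dagger$. The left factors $\sigma_N^{(i)}$ are then prepended one at a time using case (1).

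You instead open up the proof of \cref{main} and do the bookkeeping once. You aggregate the right factors into a single strongly convergent product $S_N$ and the transposed left factors into a strongly convergent $T_N$. You then apply \cref{mixed weak and strong}, and its adjoint form, once on each side of the weakly convergent middle factor. Your version is self-contained at the operator level and gives strong convergence of $\rho_N^{(1)}\cdots\rho_N^{(\ell)}$ as a byproduct. The paper's version is shorter because it uses \cref{main} as a black box.

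Your ``main obstacle'' paragraph misdiagnoses the naive induction, and the claim that your strong-convergence bookkeeping is ``what makes this induction legitimate'' is inaccurate. The induction never needs the partial product $\tau_N\rho_N^{(1)}$ to be upgraded to strong convergence. At each step the accumulated partial product plays the role of the \emph{arbitrary} weakly convergent factor in \cref{main}. Only the newly added outer factor needs its limit in $\gr(\End)$ or $\gr(\End)^\dagger$. You should drop or correct that remark, though it does not affect the validity of your main argument.
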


Another immediate corollary will be relevant to us as we consider \emph{random} permutations:
\begin{cor} \label{main random more complicated}
    Let $\sigma_N^{(1)},...,\sigma_N^{(k)},\tau_N,\rho_N^{(1)},...,\rho_N^{(\ell)} \in S_{k_N}$
    be sequences of \emph{independent random} permutations with $N \to \infty$,
    which
    converge in distribution to random permutons
    $P^{(1)},...,P^{(k)},Q,R^{(1)},...,R^{(\ell)}$.
    Moreover, assume that for each $1 \le i \le k, 1 \le j \le \ell$ we have
    \[
        \Pbb(P^{(i)}, [R^{(j)}]^\dagger \in \gr(\End)) = 1.
    \]
    Then the sequence of random (compositions of) permutations
    \[
    \sigma_N^{(1)}\cdots\sigma_N^{(k)}\tau_N\rho_N^{(1)}\cdots\rho_N^{(\ell)}
    \]
    converges in distribution to
    the random permuton 
    \[
        \tilde{P}^{(1)} \cdots \tilde{P}^{(k)}\tilde{Q}\tilde{R}^{(1)}\cdots \tilde{R}^{(\ell)}.
    \]
    where each $\tilde{P}^{(i)} \sim P^{(i)}, \tilde{Q} \sim Q, \tilde{R}^{(i)} \sim R^{(i)}$,
    and the family
    $\tilde{P}^{(1)},... \tilde{P}^{(k)},\tilde{Q},\tilde{R}^{(1)},..., \tilde{R}^{(\ell)}$
    is independent.
\end{cor}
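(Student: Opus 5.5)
The plan is to reduce the statement to the deterministic \cref{main more complicated} via a Skorokhod coupling. Throughout, identify each random permutation $\sigma \in S_{k_N}$ with its permuton $P_\sigma$. Since $\sigma \mapsto P_\sigma$ is injective on $S_{k_N}$, and by item (1) of \cref{familiar special cases} $P_{\sigma}P_{\tau} = P_{\sigma\tau}$, the composed random permutation corresponds exactly to the product of the corresponding random permutons. So everything can be phrased in terms of $\Perm$-valued random variables. With its usual topology, $\Perm$ is a compact metrizable space, hence Polish, and so is the finite product $\Perm^{k+1+\ell}$.

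\emph{Step 1 (joint convergence).} Because, for each $N$, the family $P_{\sigma_N^{(1)}},\dots,P_{\sigma_N^{(k)}},P_{\tau_N},P_{\rho_N^{(1)}},\dots,P_{\rho_N^{(\ell)}}$ is independent, its joint law is the product of the marginal laws. Weak convergence of each marginal implies weak convergence of the product measures on the product of separable metric spaces; one can check this on products of bounded continuous functions, which form a convergence-determining class. Hence the tuple converges in distribution to the law of the independent tuple $(\tilde P^{(1)},\dots,\tilde P^{(k)},\tilde Q,\tilde R^{(1)},\dots,\tilde R^{(\ell)})$.

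\emph{Step 2 (Skorokhod).} By the Skorokhod representation theorem on the Polish space $\Perm^{k+1+\ell}$, there is a single probability space carrying tuples $X_N$ and $X_\infty$ with $X_N \stackrel{d}{=}$ the $N$th tuple, $X_\infty \stackrel{d}{=}$ the limit tuple, and $X_N \to X_\infty$ almost surely. The set $\{P_\sigma : \sigma \in S_{k_N}\}$ is finite, hence Borel. Since $X_N$ has the same law as the original tuple, almost surely every coordinate of $X_N$ is of the form $P_\sigma$ with $\sigma\in S_{k_N}$. By hypothesis and equality of laws, almost surely the first $k$ limit coordinates lie in $\gr(\End)$ and the adjoints of the last $\ell$ lie in $\gr(\End)$. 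Here I need $\gr(\End)$ to be measurable in $\Perm$; this is implicit in the hypothesis $\Pbb(\cdot\in\gr(\End))=1$, and otherwise one reads it as containment of the event in a full-measure set. Intersecting countably many almost sure events, we get a full-measure event on which all hypotheses of \cref{main more complicated} hold pointwise.

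\emph{Step 3 (conclude).} On that event, \cref{main more complicated} gives that the product of the coordinates of $X_N$ converges weakly to the product of the coordinates of $X_\infty$. Thus almost sure convergence holds, and hence convergence in distribution. Two facts finish the argument. First, the product map is a Borel map $\Perm^{k+1+\ell}\to\Perm$; restricted to the finite set of permutation permutons this is trivial, and for the limit it follows from the operator description together with Borel measurability of composition. Second, the law of the product of the coordinates of $X_N$ equals that of the original composed permutation. Together these show that the composed random permutations converge in distribution to $\tilde P^{(1)}\cdots\tilde R^{(\ell)}$.

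The main obstacle is measure-theoretic bookkeeping rather than analysis. One point is justifying measurability of the limit product map and of $\gr(\End)$. The cleanest route is that composition is separately continuous by \cref{continuity fixing one component}, hence jointly Borel on a product of compact metrizable spaces. The other point is making sure the Skorokhod copies are genuinely permutation permutons of size $k_N\to\infty$, which is handled by the finiteness argument in Step 2.
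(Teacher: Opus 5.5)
Your proposal is correct and follows essentially the same route as the paper: a Skorokhod coupling, then \cref{main more complicated} applied pointwise on a full-measure event, then passage from almost sure convergence to convergence in distribution. The paper differs only in applying Skorokhod to each sequence separately and taking the product of the resulting probability spaces, which gives independence of the limits for free and makes your Step 1 unnecessary, whereas you prove joint convergence of the independent tuple and couple once on $\Perm^{k+1+\ell}$; your checks on measurability and on the Skorokhod copies being permutation permutons cover points the paper leaves implicit.
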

\begin{proof}
    By Skorokhod's representation theorem,
    for each $i$ there exists
    a single probability space $\Omega_i$
    and a sequence $\tilde{P}^{(i)}_N$
    of $\Perm$-valued random variables on $\Omega_i$
    with each $\tilde{P}^{(i)}_N$ equal in distribution to the permuton associated
    to $\sigma^{(i)}_N$,
    as well as a random variable $\tilde{P}^{(i)}$
    on $\Omega_i$
    such that $\Pbb(\lim_{N \to \infty}\tilde{P}^{(i)}_N = \tilde{P}^{(i)}) = 1$;
    the same holds for $Q_N$ and $R^{(j)}_N$.
    Taking the product of all these probability
    spaces yields a single probability space
    $\Omega$ and a measurable subset
    $\Omega' \subset \Omega$
    with $\Pbb(\Omega') = 1$
    such that on $\Omega'$ we have
    \[
        \forall 1 \le i \le k, \lim_{N \to \infty} \tilde{P}^{(i)}_N = \tilde{P},
        \lim_{N \to \infty}\tilde{Q}_N = \tilde{Q},
        \forall 1 \le j \le l,
        \lim_{N \to \infty} \tilde{R}^{(j)}_N = \tilde{R}^{(i)}.
    \]
    If we set
    \[
        \Omega''
        :=
        \Omega'
        \cap 
        \bigcap_{i=1}^k \{ \tilde{P}^{(i)} \in \gr(\mathrm{End}) \}
        \cap
        \bigcap_{j=1}^\ell
        \{ [\tilde{R}^{(j)}]^{\dagger} 
        \in \gr(\mathrm{End}) \},
    \]
    then $\Pbb(\Omega'') = 1$,
    and by \cref{main more complicated},
    on $\Omega''$ we have
    \[
    \lim_{N \to \infty}
    \tilde{P}_N^{(1)} \cdots \tilde{P}_N^{(k)}\tilde{Q}_N\tilde{R}_N^{(1)}\cdots \tilde{R}_N^{(\ell)}
    =
    \tilde{P}^{(1)} \cdots \tilde{P}^{(k)}\tilde{Q}\tilde{R}^{(1)}\cdots \tilde{R}^{(\ell)}.
    \]
    Thus we have almost-sure convergence of 
    $\tilde{P}_N^{(1)} \cdots \tilde{P}_N^{(k)}\tilde{Q}_N\tilde{R}_N^{(1)}\cdots \tilde{R}_N^{(\ell)}$
    to 
    $\tilde{P}^{(1)} \cdots \tilde{P}^{(k)}\tilde{Q}\tilde{R}^{(1)}\cdots \tilde{R}^{(\ell)}$,
    and therefore we have convergence in distribution.
    But by construction
    $\tilde{P}_N^{(1)} \cdots \tilde{P}_N^{(k)}\tilde{Q}_N\tilde{R}_N^{(1)}\cdots \tilde{R}_N^{(\ell)}$
    is equal in distribution to the 
    random permuton associated to the random permutation
    $\sigma_N^{(1)}\cdots\sigma_N^{(k)}\tau_N\rho_N^{(1)}\cdots\rho_N^{(\ell)}$,
    so we are done. 
\end{proof}

From the above, we can immediately prove \cref{corollary diagonal} from the introduction:
%5which says that if $\sigma_n \to \Delta$,
%then $\sigma_n \tau_n$ and $\tau_n \sigma_n$
%have the same limit as $\tau_n$:
\begin{proof}[Proof of \cref{corollary diagonal}]
    The general statements
    follow immediately
    from the fact that $\Delta=\Delta^\dagger$
    is the graph of the identity function $x \mapsto x$.
    The fact that a uniformly chosen
    $(231)$-avoiding permutation
    converges to $\Delta$
    follows from \cite{MinerPak, HoffmanRizzoloSlivken}.
\end{proof}

\begin{remark}
    A quick functional analysis argument
    shows that composition is jointly \emph{strongly} continuous on $\Perm$, 
    that is, $P_N \to P, Q_N \to Q$
    strongly implies $P_N Q_N \to PQ$
    strongly. If we translate this to \emph{permutations} and use \cref{characterization of strong convergence}, this just tells
    us that if $\sigma_N \to P, \tau_N \to Q$
    and $P,Q \in \gr(\End)^\dagger$,
    then $\sigma_N \tau_N \to PQ$
    and $PQ \in \gr(\End)^\dagger$,
    which already follows from
    \cref{main} and the 
    fact that a composition of
    endomorphisms is itself an endomorphism.
\end{remark}

\section{Compatibility of expectation and composition} \label{sec:expectation}
With \cref{main} proven,
we now turn to the other facts we need
for our main application, \cref{application}.
First we recall the definition of the \emph{expectation} or \emph{mean}
of a \emph{random permuton}.

\begin{definition}
    Let $P : \Omega \to \Perm$
    be a random permuton, that is,
    a permuton-valued random variable.
    The \emph{expectation}
    or \emph{mean} of $P$
    is the unique permuton
    $\E[P] \in \Perm$ satisfying
    \[
        \int f d\E[P] = \E\left[\int fdP\right]
    \]
    for all $f \in C(I \times I)$.
\end{definition}
Note that $\E[P]$ only depends on its \emph{distribution} $\nu \in \Prob(\Perm)$.
The map $\nu \mapsto \E_{P \sim \nu}[P]$
is sometimes called the \emph{barycenter}
or \emph{resultant map}.
We recall that this map is continuous with respect to the usual weak topology on measures:

\begin{prop}[e.g. Proposition 1.1 \cite{Phelps}] \label{barycenter continuity} 
    The map
    \[
        \mathrm{Prob}(\mathrm{Perm}) \to \mathrm{Perm}, \nu \mapsto \E_{P \sim \nu} [P]
    \]
    is well-defined and continuous when $\mathrm{Perm}$ is endowed with the weak topology,
    and $\mathrm{Prob}(\mathrm{Perm}) \subset C(\mathrm{Perm})^*$
    is endowed with the weak-* topology induced by the weak topology on $\mathrm{Perm}$.
\end{prop}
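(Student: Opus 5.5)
The plan has two parts: first show that the barycenter $\E_{P\sim\nu}[P]$ is a well-defined element of $\Perm$, then show that $\nu\mapsto\E_{P\sim\nu}[P]$ is continuous.

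For well-definedness, fix $\nu\in\Prob(\Perm)$ and define a linear functional on $C(I\times I)$ by $\Lambda(f)=\int_{\Perm}\left(\int f\,dP\right)d\nu(P)$. The integrand $P\mapsto\int f\,dP$ is continuous on $\Perm$ by the very definition of the weak topology, and it is bounded by $\|f\|_\infty$. Hence $\Lambda$ is well-defined, positive, satisfies $\Lambda(1)=1$, and has $|\Lambda(f)|\le\|f\|_\infty$. By the Riesz representation theorem there is a unique $M\in\Prob(I\times I)$ with $\int f\,dM=\Lambda(f)$. To check that $M$ has uniform marginals, take $f(x,y)=g(x)$ with $g\in C(I)$. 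Then $\int g(x)\,dM=\int\left(\int_I g\,dx\right)d\nu(P)=\int_I g\,dx$, because every $P$ has first marginal $\mathcal{L}$. The same argument applies to the second marginal, so $M\in\Perm$. Uniqueness of $M$ follows because measures on the compact metric space $I\times I$ are determined by their integrals against continuous functions. This gives $\E[P]=M$.

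For continuity, the key point is that both spaces are metrizable. $\Perm$ is compact metrizable, so $\Prob(\Perm)$ with the weak-* topology is also compact metrizable, and it suffices to check sequences. Suppose $\nu_n\to\nu$ weak-*. For each fixed $f\in C(I\times I)$, the function $\Phi_f(P)=\int f\,dP$ belongs to $C(\Perm)$. Therefore
\[ \int f\,d\E_{\nu_n}[P]=\int\Phi_f\,d\nu_n\to\int\Phi_f\,d\nu=\int f\,d\E_\nu[P]. \]
This says exactly that $\E_{\nu_n}[P]\to\E_\nu[P]$ weak-* in $\Perm$, which is the usual topology. One could even skip metrizability and argue directly with nets, since the computation is the same.

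There is no real obstacle; the whole argument is soft. The only step that needs care is noting that $\Phi_f$ is continuous, and this is tautological from the definition of the topology on $\Perm$. Measurability of the integrand needs no separate treatment, since continuity already gives it.
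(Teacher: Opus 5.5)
Your proof is correct, but it takes a different route from the paper, which gives no argument of its own and simply cites Phelps' general barycenter theorem.

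In that abstract setting, $\Perm$ is viewed as a compact convex subset of the locally convex space $C(I\times I)^*$ with its weak-* topology. Its continuous linear functionals are exactly the maps $M \mapsto \int f\,dM$ for $f \in C(I\times I)$. The three parts of the statement then come from three general facts:
\begin{enumerate}
\item Existence of a barycenter inside the compact convex set comes from a compactness-plus-separation (Hahn--Banach) argument.
\item Uniqueness holds because these functionals separate points.
\item Continuity holds because each composite $\nu \mapsto \int f\,d\E_\nu[P] = \int \Phi_f\,d\nu$ is continuous, and the topology of $\Perm$ is the one induced by these functionals.
\end{enumerate}

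You instead use the concrete structure of the problem. Since $\Perm \subset \Prob(I\times I)$, existence of the barycenter is just the Riesz representation theorem applied to the positive, normalized functional $\Lambda$. Membership in $\Perm$ follows because the defining constraints (uniform marginals) are linear conditions tested by the continuous functions $g(x)$ and $g(y)$, and these pass through the $\nu$-integral. Your continuity step is the same one-line observation as in the abstract proof.

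Your route gives a self-contained, elementary argument with no separation theorem. The citation gives generality, since it applies verbatim to any compact convex set in a locally convex space, and it makes clear that nothing specific to permutons is being used.
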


We also highlight the following fact, which we will use for our applications:

\begin{prop} \label{expectation composition}
    Let $P, Q$ be \emph{independent} random permutons. Then \[ \E[PQ] = \E[P] \E[Q]. \]
\end{prop}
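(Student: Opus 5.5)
The plan is to pass to operators and test against pairs of functions, using \cref{basic operator facts}. By item (3) of that proposition, a permuton is determined by its operator. By item (2), its operator is determined by the bilinear pairings $\pair{\widehat{P}f}{g} = \int f(x)\overline{g(y)}\,dP(x,y)$ for $f,g \in C(I)$, since $C(I)$ is dense in $L^2$ and the operators are uniformly bounded. So it suffices to show, for all continuous $f,g$,
\[
\pair{\widehat{\E[PQ]}f}{g} = \pair{\widehat{\E[P]}\,\widehat{\E[Q]}f}{g}.
\]
The left side equals $\E[\pair{\widehat{P}\widehat{Q}f}{g}]$, by the definition of expectation applied to the continuous function $f(x)\overline{g(y)}$ and by item (4c). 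A preliminary point is measurability of $PQ$ as a random permuton. I would get this from the disintegration formula or from continuity properties, and take it as routine.

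Next I extend the defining identity of $\E[P]$ from continuous $f\otimes g$ to arbitrary $f,g \in L^2$. That is, I show $\pair{\widehat{\E[P]}f}{g} = \E[\pair{\widehat{P}f}{g}]$ for all $f,g\in L^2$, by approximation. Both sides are continuous in $(f,g)$ in $L^2$, with modulus uniform in $\omega$, because every $\widehat{P}$ has norm $1$. Dominated convergence then handles the limit inside $\E$. Measurability of $\omega \mapsto \pair{\widehat{P}f}{g}$ follows as a pointwise limit of measurable maps.

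The main step is to compute
\[
\E[\pair{\widehat{P}\widehat{Q}f}{g}] = \E[\pair{\widehat{Q}f}{\widehat{P}^\dagger g}]
\]
by conditioning on $Q$, using independence. Fix $Q$. Then $h := \widehat{Q}f \in L^2$ is a deterministic function. By the extended identity applied to $P$ together with independence (Fubini on the product of the laws of $P$ and $Q$), $\E[\pair{\widehat{P}h}{g}\mid Q] = \pair{\widehat{\E[P]}h}{g} = \pair{\widehat{Q}f}{\widehat{\E[P]}^\dagger g}$. Taking expectation over $Q$ and applying the extended identity again, with the fixed functions $f$ and $\widehat{\E[P]}^\dagger g$, gives $\pair{\widehat{\E[Q]}f}{\widehat{\E[P]}^\dagger g} = \pair{\widehat{\E[P]}\widehat{\E[Q]}f}{g}$. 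By item (4c) this is $\pair{\widehat{\E[P]\E[Q]}f}{g}$, which concludes.

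I expect the main obstacle to be the measure-theoretic bookkeeping in this conditioning step. One must justify that $(P,Q)\mapsto \pair{\widehat{P}\widehat{Q}f}{g}$ is jointly measurable on $\Perm\times\Perm$, so that Fubini applies to the product of the two distributions. I would handle this by noting that the map is separately continuous in the weak topology by \cref{continuity fixing one component}. It is therefore Borel on the compact metrizable space $\Perm\times\Perm$, since a separately continuous function is Baire class one. Failing that, I would write it as a pointwise limit over a countable dense family of functions $h$ approximating $\widehat{Q}f$.
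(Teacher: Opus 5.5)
Your proof is correct, but it uses independence differently from the paper. The paper fixes an orthonormal basis $\{e_i\}$ of $L^2(I,\mathcal{L})$ and expands
$\pair{\widehat{Q}u}{\widehat{P}^\dagger v}=\sum_i \pair{\widehat{Q}u}{e_i}\pair{e_i}{\widehat{P}^\dagger v}$.
It exchanges sum and expectation by dominated convergence, since the partial sums are bounded by $\|u\|\|v\|$ via the projections $\Pi_N$. Each term is then a product of two bounded scalar random variables, one a function of $Q$ and one of $P$, so independence factors its expectation. Recombining the series uses the identity $\E[P^\dagger]=\E[P]^\dagger$.

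You instead apply Fubini to the product of the two laws. You freeze $Q$ and feed the $Q$-dependent vector $h=\widehat{Q}f$ into the $L^2$-extended barycenter identity for $P$. This avoids the basis expansion and the interchange of sum and expectation. It also avoids $\E[P^\dagger]=\E[P]^\dagger$, because you only ever take the adjoint of the deterministic operator $\widehat{\E[P]}$.

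The cost is that you need joint Borel measurability of $(P,Q)\mapsto\pair{\widehat{P}\widehat{Q}f}{g}$ on $\Perm\times\Perm$. You obtain this correctly: the map is separately continuous, and separately continuous functions on a product of compact metrizable spaces are of Baire class one, hence Borel. The paper gets this measurability implicitly from its series, which writes the pairing as a pointwise limit of finite sums of products of a continuous function of $P$ and a continuous function of $Q$. Your version has the small advantage of stating explicitly that $PQ$ is a genuine random permuton, so that $\E[PQ]$ is defined. The paper passes over this point.
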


This actually follows from a very general fact about products of random linear operators.
For completeness, we give an elementary
proof now:

\begin{proof}
It is clear from the definition 
and item (2) of \cref{basic operator facts}
that if $P$ is a random permuton
and $f,g \in C(I)$, then we have
\[
    \pair{\widehat{\E[P]}f}{g}
    = \E\left[ \pair{\widehat{P}f}{g}\right].
\]
From this it is not hard to show
that the same observation holds for arbitrary
$f,g \in L^2(I,\mathcal{L})$.
It is also not hard to show that $\E[P^\dagger] = \E[P]^\dagger$.

With these observations in mind,
let $P, Q$ be independent random permutons
and let $\{e_i\}_{i \in \N}$
be an orthonormal basis for $L^2(I, \mathcal{L})$,
and let $u,v \in L^2(I, \mathcal{L})$.
We then compute
\begin{align*}
    \pair{\widehat{\E[PQ]} u}{v} &= \E \pair{\widehat{P}\widehat{Q}u}{v} 
    = \E \pair{\widehat{Q}u}{\widehat{P}^\dagger v} \\
    &= \E \left[\sum_i \pair{\widehat{Q}u}{e_i} \pair{e_i}{\widehat{P}^\dagger v} \right] 
    \overset{(*)}{=} \sum_i \E \left[ \pair{\widehat{Q}u}{e_i} \pair{e_i}{\widehat{P}^\dagger v} \right] \\
    &= \sum_i \E \pair{\widehat{Q}u}{e_i} \E \pair{e_i}{\widehat{P}^\dagger v} 
    = \sum_i \pair{\widehat{\E[Q]}u}{e_i} \pair{e_i}{\widehat{\E[P^\dagger]} v} \\
    &= \pair{\widehat{\E[Q]}u}{\widehat{\E[P^\dagger]}v} 
    = \pair{\left(\widehat{\E[P^\dagger]}\right)^\dagger \widehat{\E[Q]}u}{v} \\
    &= \pair{ \widehat{\E[P]}\widehat{\E[Q]} u}{v}.
\end{align*} 
The third line is where we use independence of $P$ and $Q$.
Since $u,v$ were arbitrary, this shows $\widehat{\E[PQ]} = \widehat{\E[P]}\widehat{\E[Q]}$,
hence $\E[PQ] = \E[P] \E[Q]$.
The only questionable equality above is the
one marked $(*)$ in the second line;
%the exchange of the sum and the expectation
%\[
%    \E \left[\sum_i \pair{Qu}{e_i} \pair{e_i}{P^\dagger v} \right]
%    = \sum_i \E \left[ \pair{Qu}{e_i} \pair{e_i}{P^\dagger v} \right],
%\]
so let us show that this holds.

Denote by $\Pi_N$ the orthogonal projection onto the subspace of $L^2(I, \mathcal{L})$ spanned by $e_1,...,e_N$.
Note that for each $N$ we have
\[
    \left|\sum_{i=1}^N \pair{Qu}{e_i} \pair{e_i}{P^\dagger v}\right|
    = \left| \pair{ \Pi_N (Qu) }{ \Pi_N (P^\dagger v)} \right|
    \le \| \Pi_N \| \| Q \| \| u \| \| \Pi_N \| \|P^\dagger \| \|v\| \le \|u\| \|v\|.
\]
Hence, by the dominated convergence theorem, we 
have 
\begin{align*}
    \E \left[\sum_i \pair{Qu}{e_i} \pair{e_i}{P^\dagger v} \right] &=
    \E \left[ \lim_{N \to \infty} \sum_{i=1}^N \pair{Qu}{e_i} \pair{e_i}{P^\dagger v} \right] 
    = \lim_{N \to \infty} \E \left[\sum_{i=1}^N \pair{Qu}{e_i} \pair{e_i}{P^\dagger v} \right] \\
    & = \lim_{N \to \infty} \sum_{i=1}^N \E \left[ \pair{Qu}{e_i} \pair{e_i}{P^\dagger v} \right] 
    = \sum_i \E \left[ \pair{Qu}{e_i} \pair{e_i}{P^\dagger v} \right],
\end{align*}
as desired.
\end{proof}

\subsection{Application: products of separable permutations} \label{sec:first application}

Now we can prove the first part of \cref{application}; this will follow almost
immediately from \cref{main}, \cref{expectation composition},
and work of Maazoun \cite{Maazoun2017}.

\begin{prop} \label{first application}
    For each $N \in \N$, let
    $\sigma_N^{(1)},\sigma_N^{(2)},...$ be an i.i.d. sequence
    of random permutations
    sampled uniformly from the 
    set of all separable permutations in $S_N$.
    For each fixed $k$, as $N \to \infty$, we have
    \[
        \sigma_N^{(1)} \cdots \sigma_N^{(k)} \to P_1 \cdots P_k,
    \]
    in distribution,
    where $P_1,P_2,...$
    is an i.i.d. sequence
    of \emph{separable Brownian permutons} as defined in \cite{Bassino2016}.
    We also have
    \[
        %\E[\sigma_N^{(1)} \cdots \sigma_N^{(k)}] \to 
        \E[P_1 \cdots P_k] = \E[P_1]^k,
    \]
    and in particular, the expectation of 
    the limit permuton $\E[P_1]^k$
    is absolutely continuous with respect to Lebesgue measure, and has density $\gamma \in L^1(I \times I)$
    given by
    \[
        \gamma(x,y)
        =
        \int_{I} \cdots \int_I
        \alpha(z_k,y) \left(\prod_{i=1}^{k-1}
        \alpha(z_i,z_{i+1})\right)
        \alpha(x,z_1) dz_1 \cdots dz_k,
    \]
    where $\alpha = \alpha_{1/2}$
    is as defined in Theorem 1.7 of \cite{Maazoun2017}.
\end{prop}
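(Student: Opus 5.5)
The argument has three parts: the limit in distribution from \cref{main random more complicated}, the expectation identity from \cref{expectation composition}, and the density from \cref{familiar special cases}(3).

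\textbf{Step 1: the limit in distribution.} By \cite{Bassino2016}, a uniform separable permutation $\sigma_N^{(i)}$ converges in distribution to the separable Brownian permuton. It is known (e.g.\ \cite{Maazoun2017}) that this permuton is almost surely the graph of a measure-preserving function, i.e.\ $\Pbb(P_i \in \gr(\End)) = 1$. I would check carefully that the statement in the literature really gives $\gr(F)$ rather than $\gr(F)^\dagger$. If it gives the transpose instead, I would use the $\rho$-slots of \cref{main random more complicated} in place of the $\sigma$-slots.

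With this in hand, I apply \cref{main random more complicated} to $\sigma_N^{(1)},\dots,\sigma_N^{(k-1)}$ in the $\sigma$-slots and $\tau_N = \sigma_N^{(k)}$, with $\ell=0$. The $\sigma_N^{(i)}$ are independent and converge in distribution to i.i.d.\ copies of the separable Brownian permuton. The hypothesis $\Pbb(P^{(i)} \in \gr(\End)) = 1$ holds for the $\sigma$-slots, and nothing is required of $Q$. This gives $\sigma_N^{(1)}\cdots\sigma_N^{(k)} \to P_1\cdots P_k$ in distribution, with the $P_i$ i.i.d.

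\textbf{Step 2: the expectation identity.} I induct on $k$ using \cref{expectation composition}. The permuton $P_1\cdots P_{k-1}$ is a measurable function of $(P_1,\dots,P_{k-1})$, so it is independent of $P_k$. Hence $\E[P_1\cdots P_k] = \E[P_1\cdots P_{k-1}]\,\E[P_k] = \E[P_1]^k$.

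Measurability of $(P,Q)\mapsto PQ$ is a minor point. I would obtain it by noting that $\pair{\widehat{PQ}f}{g} = \sum_i \pair{\widehat{Q}f}{e_i}\pair{e_i}{\widehat{P}^\dagger g}$ is a countable limit of weakly continuous functions. Alternatively, one can say composition is separately continuous by \cref{continuity fixing one component}, hence Borel.

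\textbf{Step 3: the density formula.} Theorem 1.7 of \cite{Maazoun2017} states that $\E[P_1]$ has density $\alpha = \alpha_{1/2}$ with respect to $\mathcal{L}^2$. Applying \cref{familiar special cases}(3) repeatedly, $\E[P_1]^k$ has density equal to the $k$-fold product $\alpha * \cdots * \alpha$, where $(f*g)(x,y) = \int f(z,y)g(x,z)\,dz$. Unwinding the nested integrals, and using Fubini--Tonelli since everything is nonnegative, gives the stated iterated integral. The one thing to get right is the ordering of the variables: the convention $f(z,y)g(x,z)$ places $\alpha(x,z_1)$ first and $\alpha(z_\cdot,y)$ last.

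\textbf{Main obstacle.} The substantive inputs are citations. The step needing the most care is confirming the graph-versus-transpose orientation of the separable Brownian permuton, so that \cref{main random more complicated} applies in the correct slot.

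Even if the orientation is uncertain, the argument survives. The law of the separable Brownian permuton is invariant under $P\mapsto P^\dagger$, since separable permutations are closed under inversion. So $P_i^\dagger$ is also a.s.\ a graph, and both slot choices of \cref{main random more complicated} are available.
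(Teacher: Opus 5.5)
Your route is the same as the paper's: use \cite{Bassino2016} together with Maazoun's Theorem~1.3 to put the separable Brownian permuton almost surely in $\gr(\End)$, then apply \cref{main random more complicated}, then \cref{expectation composition}, then Maazoun's Theorem~1.7 with \cref{familiar special cases}(3). Two of your additions are sound and go beyond what the paper writes. The first is the measurability justification for the induction in Step~2. The second is the remark that the law is invariant under $P\mapsto P^\dagger$: the basis $\{2413,3142\}$ of separable permutations is closed under inversion, $P_{\sigma^{-1}}=P_\sigma^\dagger$, and $\dagger$ is weakly continuous. The paper leaves both implicit.

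The claim in Step~3 that unwinding the $k$-fold convolution $\alpha*\cdots*\alpha$ ``gives the stated iterated integral'' is not correct. The displayed $\gamma$ has $k+1$ factors of $\alpha$ and $k$ integration variables. Already for $k=1$ it reads $\int_I \alpha(x,z_1)\alpha(z_1,y)\,dz_1$, which is the density of $\E[P_1]^2$, not $\alpha$. Carrying out your recursion $\gamma_k(x,y)=\int_I\alpha(z,y)\,\gamma_{k-1}(x,z)\,dz$ with $\gamma_1=\alpha$ gives, for $k\ge 2$,
\[
\gamma_k(x,y)=\int_{I^{k-1}}\alpha(x,z_1)\Big(\prod_{i=1}^{k-2}\alpha(z_i,z_{i+1})\Big)\alpha(z_{k-1},y)\,dz_1\cdots dz_{k-1}.
\]
So the statement's formula is off by one in its indices; as written it is the density of $\E[P_1]^{k+1}$. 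The paper's one-line appeal to Theorem~1.7 and \cref{familiar special cases} passes over this too, so the defect lies in the statement, not in your method. Still, you checked the ordering of the variables but not their number, and you should not assert agreement with the displayed formula. With the indexing corrected, your proof is complete.
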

\begin{proof}
    By \cite{Bassino2016},
    there exists a random permuton $P$,
    called the \emph{Brownian separable permuton},
    such that each $\sigma_N^{(i)}$
    converges to $P$ in distribution
    as $N \to \infty$.
    Theorem 1.3 of Maazoun \cite{Maazoun2017}
    says that there exists some
    \emph{random} measure-preserving
    function $F$ such that
    $P = \gr(F)$ almost surely.
    Therefore, the
    hypotheses of \cref{main random more complicated} are satisfied, and so
    for each $k$ we have
    \[
            \sigma_N^{(1)} \cdots \sigma_N^{(k)} \to P_1 \cdots P_k,
    \]
    in distribution, where the $P_i$
    are independent and have the same distribution as $P$.
    %\cref{barycenter continuity} then tells us that
    %\[
    %\E[\sigma_N^{(1)} \cdots \sigma_N^{(k)}] \to \E[P_1 \cdots P_k]
    %\]
    %as $N \to \infty$.
    \cref{expectation composition}
    then tells us that
    \[
    \E[P_1 \cdots P_k]
    = \E[P_1] \cdots \E[P_k]
    = \E[P]^k,
    \]
    so the remainder
    of \cref{first application} follows from Theorem 1.7 of \cite{Maazoun2017} and \cref{familiar special cases}.
\end{proof}

\section{A criterion for mixing of permutons} \label{sec:mixing}

Recall that we denote by $U$ the ``uniform permuton,'' that is, Lebesgue measure on $[0,1]^2$.
We will call $P \in \mathrm{Perm}$ \emph{mixing} if $P^n \to U$ (weakly) as $n \to \infty$.
The only remaining part of \cref{application} which we have not yet proved is
the fact that if $P$ is the Brownian separable permuton, then the \emph{mean} Brownian separable permuton $\E[P]$ is mixing.

Our main goal in this section is to prove the following sufficient condition for a permuton to be mixing, which we will later apply to the mean separable Brownian permuton:
\begin{prop} \label{mixing criterion}
    Let $P \in \mathrm{Perm}$ be such that $P^\dagger = P$ and for any $A, B \subset I$ Borel with $\mathcal{L}(A), \mathcal{L}(B) > 0$, we have
    $P(A \times B) > 0$. Then $P$ is mixing.
\end{prop}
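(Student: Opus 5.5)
The plan is to work entirely with the operator $T := \widehat{P}$ on $L^2(I,\mathcal{L})$ and use the spectral theorem. Since $P^\dagger = P$, item (4) of \cref{basic operator facts} gives $T^\dagger = T$, so $T$ is a self-adjoint contraction with spectrum in $[-1,1]$. Let $E$ be its spectral measure. By \cref{characterization of weak convergence}, it suffices to show $T^n \to \widehat{U}$ in the weak operator topology. Here $\widehat{U}$ is the orthogonal projection $\Pi_0 f = \langle f, 1\rangle 1$ onto constants. Note that $T1 = 1$ and $T$ preserves the orthogonal complement $\{1\}^\perp$ of mean-zero functions, because $\langle Tf, 1\rangle = \langle f, T1\rangle = \langle f,1\rangle$. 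So it suffices to show $\langle T^n f, g\rangle \to 0$ for mean-zero $f,g$.

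Writing $\langle T^n f, g\rangle = \int_{[-1,1]} \lambda^n \, d\langle E(\lambda) f, g\rangle$, dominated convergence shows that $T^n$ converges weakly to the spectral projection $E(\{1\})$ along even $n$, and to $E(\{1\}) - E(\{-1\})$ along odd $n$. Hence the goal reduces to showing that the eigenspaces $\ker(T-1)$ and $\ker(T+1)$, restricted to $\{1\}^\perp$, are trivial. Equivalently, I must show that $\ker(T-1)$ consists only of constants and that $\ker(T+1) = 0$.

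The main obstacle is this eigenvector analysis, which is where positivity $P(A\times B)>0$ enters. Suppose $Tf = \lambda f$ with $\lambda = \pm 1$, and take $f$ real; this is allowed because $T$ maps real functions to real functions. By item (2) of \cref{basic operator facts}, $\|f\|_2^2 = |\langle Tf, f\rangle| = \left|\int f(x) f(y)\, dP(x,y)\right|$. Since both marginals of $P$ are uniform, we have the identity
\[
\int (f(x) - \lambda f(y))^2 \, dP(x,y) = 2\|f\|_2^2 - 2\lambda \int f(x)f(y)\,dP = 0 .
\]
Thus $f(y) = \lambda f(x)$ for $P$-a.e. $(x,y)$.

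For $\lambda = 1$: if $f$ is not a.e. constant, choose $c$ with $A = \{f < c\}$ and $B = \{f > c\}$ both of positive measure. Then $P(A\times B)>0$, but $f(x) < c < f(y)$ on $A \times B$, which contradicts $f(x)=f(y)$ $P$-a.e.

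For $\lambda = -1$: if $f \ne 0$, one of the sets $\{f>0\}$, $\{f<0\}$ has positive measure. Take, say, $A = B = \{f > 0\}$, or $A=B=\{f<0\}$ in the other case. Then $P(A\times A) > 0$, yet $f(y) = -f(x)$ forces opposite signs on $A \times A$ to hold $P$-a.e., a contradiction.

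Combining these facts, we get $E(\{1\}) = \Pi_0$ and $E(\{-1\}) = 0$. So both the even and odd subsequences of $T^n$ converge weakly to $\Pi_0 = \widehat{U}$. Hence $P^n \to U$ by \cref{characterization of weak convergence}, which shows that $P$ is mixing.
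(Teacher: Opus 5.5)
Your proof is correct, and after the shared first step it takes a different route from the paper's. Both proofs start from the spectral theorem for the self-adjoint contraction $T=\widehat{P}$. From there the paper works at the level of permutons:
\begin{enumerate}
\item it packages the limit of the even powers as an idempotent permuton $\Pi$ (\cref{even powers converge});
\item it proves that idempotent permutons are symmetric conditional expectation operators (\cref{idempotent permutons kernel condition}, \cref{idempotent permutons are condition expectations});
\item it transfers the positivity hypothesis from $P$ to $P^2$ and then to $\Pi$ (\cref{P to P2}, \cref{absolute continuity});
\item it concludes $\Pi=U$ via \cref{characterization of uniform};
\item it handles odd powers separately, via $P^{2n+1}=P\cdot P^{2n}\to PU=U$ and separate continuity.
\end{enumerate}
You instead identify the spectral projections at $\pm1$ with the eigenspaces $\ker(T\mp 1)$. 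You then rule out these eigenspaces directly with the identity $\int (f(x)-\lambda f(y))^2\,dP(x,y)=0$, which uses only the uniform marginals. Your route is shorter and bypasses the idempotent machinery and the separate treatment of odd powers. It also gives strong, not just weak, convergence of all powers. It makes visible exactly what the positivity hypothesis is used for. With disjoint $A,B$ it excludes nonconstant fixed vectors of $T$, an ergodicity condition. With $A=B$ it excludes $-1$ as an eigenvalue, an aperiodicity condition. In exchange, the paper's route yields structural facts of independent interest, which it reuses in \cref{sec:algebra}.

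Two small points. First, along even $n$ the limit is $E(\{1\})+E(\{-1\})=E(\{-1,1\})$, not $E(\{1\})$, since $\lambda^{2n}\to\ind_{\{-1,1\}}(\lambda)$. This is harmless, because you go on to prove $E(\{-1\})=0$. Second, you use the standard fact $\operatorname{Ran}E(\{\pm1\})=\ker(T\mp1)$, which is not among the properties listed in \cref{spectral theorem}. It does follow from them: write $\ind_{\{\pm1\}}$ as the bounded pointwise limit of $\tfrac12(z^{2n}\pm z^{2n+1})$ on $[-1,1]$.
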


The overview is as follows. First, if $P$ is symmetric, the Spectral Theorem will tell us that $P^{2n}$ converges to an \emph{idempotent} permuton $\Pi$.
Then, we show that (the operators associated to) idempotent permutons
are all in fact \emph{conditional expectation operators}, that is, for any $\Pi \in \Perm$
idempotent, there exists a sub-$\sigma$-algebra $\mathcal{F}$ of Borel on $I$
such that $\widehat{\Pi}$
is the orthogonal projection
onto the subspace of $\mathcal{F}$-measurable
functions on $I$.
We show that the positive measure condition on $P$ will imply the same on $\Pi$,
and it is not hard to show that
if a conditional expectation operator $\widehat{\Pi}$
satisfies this condition, then 
$\mathcal{F}$ is trivial and so $\Pi = U$.
Thus $P^{2n} \to U$, and a simple
continuity argument will then show $P^n \to U$.

\subsection{Convergence to an idempotent}
   Our first step is to show
   that powers of certain operators converge to idempotents.
   For this, we recall a version of the Spectral Theorem for bounded normal\footnote{Here an operator is \emph{normal} if 
   it commutes with its adjoint, i.e. $T T^\dagger = T^\dagger T$. In particular, if $T=T^\dagger$ is self-adjoint, it is also normal.}
   operators on a Hilbert space $H$.
   We follow Folland \cite{Folland}, Section 1.4.
   Morally, the spectral theorem
   says that the powers $T^n$
   of a normal operator $T$ behave like
   the functions $z \mapsto z^n$
   restricted to $\mathrm{Spec}(T)$;
   in particular, if $\mathrm{Spec}(T) \subseteq [-1,1]$,
   the pointwise convergence of $z^{2n}$
   to the idempotent function $\ind_{\{-1,1\}} =\ind_{\{-1,1\}}^2$ will imply that $T^{2n}$ converges
   to a corresponding idempotent operator, which is our goal.

   \begin{theorem}[Spectral theorem for bounded normal operators, see e.g. Theorem 1.51 in Folland \cite{Folland}] \label{spectral theorem}
      Let $T$ be a bounded operator on a separable Hilbert space $H$ such that $T T^\dagger = T^\dagger T$.
      Define
      \[
         \mathrm{Spec}(T) := \{ \lambda \in \C : (\lambda I - T) \mbox{ is not invertible} \}.
      \]
      Then $\mathrm{Spec}(T)$ is a compact subset of $\C$ contained in $\{ \lambda \in \C : |\lambda| \le \| T \| \}$.
      Denote by $B(\mathrm{Spec}(T))$
      the set of all bounded Borel functions
      on $\mathrm{Spec}(T)$,
      and denote by $\mathcal{B}(H)$
      the set of all bounded linear operators on $H$.
      There exists a map $B(\mathrm{Spec}(T)) \to \mathcal{B}(H),f \mapsto f(T)$
      with the following properties:
      \begin{enumerate}
          \item For all $f, g \in B(\mathrm{Spec}(T))$,
          $(fg)(T) = f(T) g(T)$.
          \item $\overline{f}(T) = f(T)^\dagger$,
          where $\overline{\cdot}$ is complex conjugation.
          \item In the case of the identity map $f(z) = z$, $f(T) = T$.
          \item Suppose that $f_n, f \in B(\mathrm{Spec}(T))$, $f_n \to f$ pointwise,
          and there exists a constant $c \in \R$
          such that $|f_n(z)| \le c$ for all $z \in \mathrm{Spec}(T)$ and $n \in \N$.
          Then $f_n(T) \to f(T)$
          in the strong operator topology.
          \item If $f,g \in C(\mathrm{Spec}(T))$
          are \emph{continuous} and $f(T) = g(T)$,
          then $f = g$.
      \end{enumerate}
   \end{theorem}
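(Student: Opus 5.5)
The statement is a classical result quoted from Folland, so the plan is the standard route: build a continuous functional calculus via Gelfand theory and Stone--Weierstrass, then extend it to bounded Borel functions via spectral measures obtained from the Riesz representation theorem.

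\textbf{Spectrum.} If $|\lambda| > \|T\|$, the Neumann series $\sum_{n \ge 0} \lambda^{-n-1} T^n$ converges and inverts $\lambda I - T$. The invertible operators form an open set, since small perturbations of an invertible operator are invertible by another Neumann series. Hence $\mathrm{Spec}(T)$ is closed and bounded by $\|T\|$, so it is compact.

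\textbf{Continuous functional calculus.} Let $\mathcal{A}$ be the norm-closed $*$-subalgebra of $\mathcal{B}(H)$ generated by $I$, $T$, $T^\dagger$. Because $TT^\dagger = T^\dagger T$, $\mathcal{A}$ is a commutative C$^*$-algebra.
\begin{itemize}
\item Apply Gelfand theory to $\mathcal{A}$: the Gelfand transform is an isometric $*$-isomorphism $\mathcal{A} \cong C(\Sigma)$, where $\Sigma$ is the character space.
\item Evaluation $\chi \mapsto \chi(T)$ is a homeomorphism $\Sigma \to \mathrm{Spec}_{\mathcal{A}}(T)$. Spectral permanence for C$^*$-subalgebras gives $\mathrm{Spec}_{\mathcal{A}}(T) = \mathrm{Spec}(T)$.
\item Invert to get an isometric $*$-homomorphism $C(\mathrm{Spec}(T)) \to \mathcal{B}(H)$ sending $z \mapsto T$. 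This gives (1)--(3) for continuous functions.
\item Property (5) is immediate from isometry: $f(T) = g(T)$ forces $\|f-g\|_\infty = 0$.
\end{itemize}
The key fact inside Gelfand theory is that for normal $S$ one has $\|S\| = r(S)$, the spectral radius. This follows from the C$^*$-identity $\|S^\dagger S\| = \|S\|^2$, which gives $\|S^{2^k}\| = \|S\|^{2^k}$ for self-adjoint and then normal $S$, combined with the spectral radius formula $r(S) = \lim_n \|S^n\|^{1/n}$.

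\textbf{Borel extension.} For $u, v \in H$, the map $f \mapsto \langle f(T)u, v\rangle$ is a bounded linear functional on $C(\mathrm{Spec}(T))$. The Riesz representation theorem gives a regular complex Borel measure $\mu_{u,v}$ of total variation at most $\|u\|\|v\|$, with $\mu_{u,u} \ge 0$ because $f \ge 0$ implies $f(T) = (\sqrt f(T))^\dagger \sqrt f(T) \ge 0$. For bounded Borel $f$, the form $(u,v) \mapsto \int f\, d\mu_{u,v}$ is bounded and sesquilinear, so it defines $f(T)$ by the Riesz lemma on $H$.
\begin{itemize}
\item Property (2) follows from $\overline{\mu_{u,v}} = \mu_{v,u}$.
\item For (1), first fix continuous $g$ and pass from continuous $f$ to Borel $f$, using dominated convergence together with density of $C(\mathrm{Spec}(T))$ in $L^1(|\mu_{u,v}|)$ (or a monotone class argument). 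Then repeat with $f$ Borel fixed and $g$ varying.
\item Once (1) and (2) hold, $\langle f(T)u, f(T)u\rangle = \int |f|^2\, d\mu_{u,u}$.
\item For (4), $\|(f_n(T) - f(T))u\|^2 = \int |f_n - f|^2\, d\mu_{u,u} \to 0$ by dominated convergence with bound $4c^2$.
\end{itemize}

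\textbf{Main obstacle.} The step I expect to be hardest is the isometry $\|f(T)\| = \sup_{\mathrm{Spec}(T)} |f|$, i.e.\ identifying the character space with $\mathrm{Spec}(T)$ and proving $\|S\| = r(S)$ for normal $S$; this is where Gelfand--Mazur and spectral permanence enter. The Borel extension is routine bookkeeping with measure-theoretic limit theorems.
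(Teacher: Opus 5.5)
Your proposal is correct. The paper does not prove this statement; it cites Folland, and your argument is essentially the standard route taken there: Gelfand--Naimark for the commutative C$^*$-algebra generated by $I$, $T$, $T^\dagger$, then the Riesz representation theorem to produce the measures $\mu_{u,v}$ and extend the calculus to bounded Borel functions, where your $\mu_{u,v}$ is $\langle P(\cdot)u,v\rangle$ for Folland's projection-valued measure $P$.

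The only step worth tightening is (1). There the approximation should be done in $L^1(|\mu_{u,v}|+|\mu_{g(T)u,v}|)$ rather than $L^1(|\mu_{u,v}|)$. Alternatively, use your monotone class argument, or note that $g\,d\mu_{u,v}$ and $\mu_{g(T)u,v}$ agree on continuous functions and therefore coincide by uniqueness in the Riesz theorem.
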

   
   We also use the well-known corollary:
   \begin{prop} \label{symmetric operators real spectrum}
      Let $T$ be a self-adjoint bounded operator. Then $\mathrm{Spec}(T) \subseteq [-\|T\|, \|T\|]$.
   \end{prop}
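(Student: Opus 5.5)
The plan is to first show that $\mathrm{Spec}(T)\subseteq\R$, and then bound it by $\|T\|$ using the inclusion $\mathrm{Spec}(T)\subseteq\{|\lambda|\le\|T\|\}$ already supplied by \cref{spectral theorem}. Combining the two gives $\mathrm{Spec}(T)\subseteq[-\|T\|,\|T\|]$. So the whole task is showing that for $\lambda=a+ib$ with $b\neq 0$, the operator $\lambda I-T$ is invertible.

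For this I would use the standard Hilbert space estimate. For any $x\in H$, self-adjointness of $T$ makes $\langle (T-aI)x,x\rangle$ real. Hence
\[
\|(\lambda I-T)x\|^2=\|(aI-T)x\|^2+b^2\|x\|^2\ge b^2\|x\|^2,
\]
because the cross terms $\pm ib\langle (aI-T)x,x\rangle$ and their conjugates cancel. This bound says $\lambda I-T$ is injective with closed range. The same estimate applied to $\bar\lambda$ shows $(\lambda I-T)^\dagger=\bar\lambda I-T$ is injective. Therefore $\mathrm{ran}(\lambda I-T)^\perp=\ker(\bar\lambda I-T)=\{0\}$, so the range is dense, and being closed it is all of $H$. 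The bounded inverse theorem (or directly the lower bound $\|(\lambda I - T)x\|\ge |b|\|x\|$) then gives a bounded inverse, so $\lambda\notin\mathrm{Spec}(T)$.

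An alternative route goes through the functional calculus itself. Self-adjointness gives $\bar z(T)=T^\dagger=T=z(T)$ by items (2) and (3) of \cref{spectral theorem}. Item (5) then gives $\bar z=z$ on $\mathrm{Spec}(T)$, which says exactly that $\mathrm{Spec}(T)\subseteq\R$. This version is shorter and uses only results already stated in the paper, so I would present it as the main argument and keep the direct estimate as a remark.

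There is no real obstacle here. The only point needing care is that item (5) is stated for continuous functions, and both $z$ and $\bar z$ are indeed continuous on the compact set $\mathrm{Spec}(T)$.
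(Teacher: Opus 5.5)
Your main argument is the same as the paper's: self-adjointness gives $\bar z(T)=T^\dagger=T=z(T)$, so item (5) of the spectral theorem forces $\bar z=z$ on $\mathrm{Spec}(T)$, i.e. $\mathrm{Spec}(T)\subseteq\R$, and intersecting with the disk of radius $\|T\|$ finishes. Your direct estimate $\|(\lambda I-T)x\|\ge|b|\,\|x\|$ is also correct, and it is a self-contained alternative that avoids the spectral theorem, and with it the separability hypothesis, altogether.
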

   \begin{proof}
      First, note that by self-adjointness, we must have $\mathrm{Spec}(T) \subseteq \R$;
      otherwise, $z$ and $\overline{z}$ would not be identically equal on $\mathrm{Spec}(T)$,
      and then injectivity of the map 
      $C(\mathrm{Spec}(T)) \to \mathcal{B}(H), f \mapsto f(T)$
      would imply that
      their corresponding operators $T$ and $T^\dagger$ are not equal, contradicting our assumption.
      Next, recall that we always have $\mathrm{Spec}(T) \subseteq \{ \lambda \in \C : |\lambda| \le \| T \| \}$; the intersection of this
      disk with $\R$ is $[-\|T\|, \|T\|]$, and so we are done.
   \end{proof}

   With this, we now easily show that
   \emph{even} powers of any symmetric permuton
   will converge to an idempotent:

   \begin{prop} \label{even powers converge}
       Let $P \in \Perm$ with $P = P^\dagger$.
       Then there exists a permuton $\Pi \in \Perm$ with $\Pi^2 = \Pi$ (i.e. $\Pi$ idempotent) such that
       \[
           P^{2n} \to \Pi.
       \]
       strongly as $n \to \infty$.
   \end{prop}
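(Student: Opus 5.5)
Set $T := \widehat{P}$ acting on $L^2(I,\mathcal{L})$. Since $P = P^\dagger$, item (4b) of \cref{basic operator facts} gives $T = T^\dagger$, so $T$ is self-adjoint, hence normal. Item (1) gives $\|T\| = 1$, so \cref{symmetric operators real spectrum} yields $\mathrm{Spec}(T) \subseteq [-1,1]$. Let $f_n(z) := z^{2n}$ on $\mathrm{Spec}(T)$. These are bounded by $1$ and converge pointwise to $f := \ind_{\{-1,1\}}$. Items (1) and (3) of \cref{spectral theorem} give $f_n(T) = T^{2n}$, and item (4) then gives $T^{2n} \to E := f(T)$ in the strong operator topology. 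Because $f^2 = f$ and $\overline{f} = f$, items (1) and (2) show $E^2 = E$ and $E^\dagger = E$.

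It remains to see that $E = \widehat{\Pi}$ for some permuton $\Pi$. By item (4c) of \cref{basic operator facts}, $T^{2n} = \widehat{P^{2n}}$. Strong convergence implies weak operator convergence, so $\widehat{P^{2n}} \to E$ weakly. Since $\Perm$ is compact in its usual topology, a subsequence $P^{2n_j}$ converges weakly to some $\Pi \in \Perm$. By \cref{characterization of weak convergence}, $\widehat{P^{2n_j}} \to \widehat{\Pi}$ in the weak operator topology on $L^2$. Weak operator limits are unique, so $\widehat{\Pi} = E$. This identification is the only step needing care, and compactness together with \cref{characterization of weak convergence} handles it.

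Finally, $\widehat{\Pi^2} = \widehat{\Pi}^2 = E^2 = E = \widehat{\Pi}$, so injectivity (item (3) of \cref{basic operator facts}) gives $\Pi^2 = \Pi$. The whole sequence satisfies $\widehat{P^{2n}} \to \widehat{\Pi}$ in the strong operator topology, which is exactly $P^{2n} \to \Pi$ strongly by definition. No subsequence issue remains, because the strong limit $E$ was obtained for the full sequence.
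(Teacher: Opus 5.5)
Your proof is correct and follows the paper's argument: apply the spectral theorem to the self-adjoint contraction $\widehat{P}$ with $z^{2n} \to \ind_{\{-1,1\}}$ pointwise and boundedly on $\mathrm{Spec}(\widehat{P}) \subseteq [-1,1]$, then get idempotence from $f^2 = f$ together with injectivity of $P \mapsto \widehat{P}$. The one difference is that you justify that the strong limit $f(\widehat{P})$ is of the form $\widehat{\Pi}$ for some $\Pi \in \Perm$, using compactness of $\Perm$, the characterization of weak convergence, and uniqueness of weak operator limits; the paper simply asserts this step.
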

   \begin{proof}
       Note that since $\widehat{P} = \widehat{P}^\dagger$,
       the Spectral Theorem applies.
       For each $n \in \N$,
       define $f_n \in C(\mathrm{Spec}(\widehat{P})$
       by $f_n(z) = z^{2n}$.
       Also define $f \in B(\mathrm{Spec}(\widehat{P}))$
       by $f(z) = \ind_{\{-1,1\}}(z)$.

       Since $\mathrm{Spec}(\widehat{P}) \subset [-1,1]$
       by \cref{symmetric operators real spectrum},
       we see that the $f_n$ are uniformly bounded
       by the constant $1$, and $f_n \to f$
       pointwise. Therefore, by item (4) of \cref{spectral theorem}, we have strong operator convergence of the corresponding operators
       \[
           \widehat{P}^{2n} = f_n(\widehat{P}) \to f(\widehat{P}) =: \widehat{\Pi}.
       \]
       Since $\widehat{\Pi}$ is the strong limit
       of a sequence of operators associated to permutons,
       it is the operator associated to some $\Pi \in \Perm$. Moreover, since $f$ is $\{0,1\}$-valued,
       \[
           \widehat{\Pi}^2 = f^2(\widehat{P})
           = f(\widehat{P}) = \widehat{\Pi},
       \]
       that is, $\Pi$ is idempotent.
   \end{proof}

\subsection{Characterization of idempotent permutons}
      In this section,
      we establish that idempotent
      permutons are all of a very particular
      form; in particular, their associated operators are all conditional expectation operators.\footnote{This result
      has was shown by Sempi \cite{Sempi2002}
      in the context of copulae,
      and is known in the context
      of Markov operators, 
      see e.g. section 13.3 of \cite{Eisner2015}.
      Again we include
      these proofs for completeness.} This will
      allow us to derive
      a sufficient condition for 
      showing that an idempotent 
      permuton $P$ is equal to $U$,
      which we will ultimately apply to
      prove our mixing statement
      \cref{mixing criterion}.

      Before we show the characterization
      as conditional expectation operators,
      we show a more elementary characterization
      in terms of the associated Markov kernel.
      For intuition, one can think
      of an idempotent permuton
      as coming from an equivalence
      relation on $I$, and the corresponding
      Markov kernel sends each point $x \in I$
      to ``the uniform measure on the equivalence class of $x$.''
      %If one has an equivalence
      %relation on $I$ with only finitely
      %many equivalence classes,
      %one can build a Markov kernel
      %literally as described above,
      %and this gives an idempotent permuton.
      
      \begin{prop} \label{idempotent permutons kernel condition}
      Let $P \in \Perm$. The following are equivalent:
      \begin{enumerate}
          \item $P$ is idempotent, that is, $P^2 = P$.
          \item For $\mathcal{L}$-a.e. $x \in I$,
      \[
         p_x \left( \left\{ y \in I : p_y = p_x \right\}\right) = 1.
      \]
          \item $P$ is idempotent and symmetric, that is, $P^2 = P$ and $P^\dagger = P$.
      \end{enumerate}
      \end{prop}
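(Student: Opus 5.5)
The plan is to prove (3) $\Rightarrow$ (1) $\Rightarrow$ (2) $\Rightarrow$ (3). The implication (3) $\Rightarrow$ (1) is trivial. Throughout I work with the operator $K$ on $L^2(I,\mathcal{L})$ given pointwise by $(Kg)(x) := \int g\,dp_x$ for bounded Borel $g$. Comparing with \eqref{eq:operator definition}, $K = \widehat{P^\dagger} = \widehat{P}^\dagger$ by \cref{basic operator facts}(4). By the same proposition, $P^2 = P$ iff $K^2 = K$, and $P^\dagger = P$ iff $K = K^\dagger$, the latter by injectivity (\cref{basic operator facts}(3)).

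\textbf{(1) $\Rightarrow$ (2).} First I show symmetry. If $P^2=P$, then $K$ is an idempotent of operator norm $\le 1$ on a Hilbert space. I will invoke the standard fact that such an idempotent is an orthogonal projection; the quick proof is that if $u \in \mathrm{ran}\,K$ and $w \in \ker K$, then $\|u\| = \|K(u+tw)\| \le \|u+tw\|$ for all scalars $t$, which forces $\langle u,w\rangle=0$. Hence $K = K^\dagger$ and $P = P^\dagger$.

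Next I show that $Kg$ is $P$-a.s. constant along pairs. For bounded real Borel $g$, set $h = Kg$ and expand
\[
\int |h(x)-h(y)|^2\,dP(x,y) = \int h(x)^2 dx + \int h(y)^2 dy - 2\int h(x)\Bigl(\int h\,dp_x\Bigr)dx = 2\|h\|^2 - 2\langle h, Kh\rangle ,
\]
using that both marginals of $P$ are uniform. Since $Kh = K^2 g = h$, this equals $0$, so $Kg(x) = Kg(y)$ for $P$-a.e. $(x,y)$.

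Applying this to the countable family $g = \ind_B$, with $B$ in $\mathcal{D}$ (the rational intervals from the proof of \cref{characterization of isometries}), gives, off a $P$-null set, $p_x(B)=p_y(B)$ for all $B\in\mathcal{D}$. By the $\pi$--$\lambda$ theorem this means $p_x=p_y$. The set $\{(x,y): p_x = p_y\}$ is Borel, being a countable intersection over $\mathcal{D}$. It therefore has full $P$-measure, and disintegrating $P=\int \delta_x\otimes p_x\,dx$ yields $p_x(\{y : p_y=p_x\})=1$ for a.e. $x$.

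\textbf{(2) $\Rightarrow$ (3).} Idempotence is direct: $(p\circ p)_x = \int p_y\,dp_x(y) = p_x$, since $p_y = p_x$ for $p_x$-a.e. $y$.

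For symmetry, note that (2) says $P$ is concentrated on the Borel set $\{p_x = p_y\}$, so $Kf(x) = Kf(y)$ for $P$-a.e. $(x,y)$. Then, for real bounded $f,g$,
\[
\langle Kf, Kg\rangle = \int Kf(x) \int g\,dp_x\,dx = \int Kf(x) g(y)\,dP(x,y) = \int Kf(y)g(y)\,dP(x,y) = \langle Kf, g\rangle .
\]
The left-hand side is symmetric in $f,g$, so $\langle Kf,g\rangle = \langle f, Kg\rangle$. By density this extends to all of $L^2$, giving $K=K^\dagger$ and hence $P=P^\dagger$.

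The main obstacle is the step in (1) $\Rightarrow$ (2) that passes from the operator identity to a pointwise kernel statement. That is why I use the concrete version $Kg(x)=\int g\,dp_x$ rather than an a.e. class, together with the variance computation against $P$. Its key input is the symmetry $K = K^\dagger$, which in turn rests on the norm-one-idempotent lemma.
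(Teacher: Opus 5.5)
Your proof is correct and follows essentially the paper's route: the same variance computation $\int |h(x)-h(y)|^2\,dP(x,y)=0$ for $h(x)=p_x(A)$, with $A$ ranging over rational intervals, gives (1) $\Rightarrow$ (2), and your identity $\langle Kf,g\rangle=\langle Kf,Kg\rangle$ is the operator form of the paper's $P=\int (p_x\otimes p_x)\,dx$ in (2) $\Rightarrow$ (3). One correction to your closing remark: symmetry is not an input to the variance step, because the expansion only uses $Kh=K^2g=h$, which follows from $P^2=P$ alone, so the norm-one-idempotent lemma (though valid) is redundant there.
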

      \begin{proof}
            First, suppose (2) holds. For $\mathcal{L}$-a.e. $x$ we have
      \[
           p_x \left( \left\{ y \in [0,1] : p_y = p_x \right\}\right) = 1.
      \]
      Hence, for all such $x$, we have
      \begin{align*}
         (p \circ p)_x = \int p_y dp_x(y) 
         = \int p_x dp_x(y) = p_x.
      \end{align*}
      Thus $P^2 = P$ and $P$ is idempotent, as desired.
      
      To see symmetry, we compute
      \begin{align*}
          P &= \int_{x \in I} (\delta_x \otimes p_x)dx 
          = \int_{x \in I} \int_{y \in I} (\delta_y \otimes p_y) dp_x(y) dx \\
          &= \int_{x \in I} \int_{y \in I} (\delta_y \otimes p_x) dp_x(y)dx 
          = \int_{x \in I} \left( \int_{y \in I} \delta_y dp_x(y)\right) \otimes p_x dx \\
          &= \int_{x \in I} (p_x \otimes p_x)dx.
      \end{align*}
      Above, the second inequality is from the fact that $p \circ dx = dx$, and the third inequality is from the condition (2).
      Since each of the measures $p_x \otimes p_x$
      is a symmetric measure on $I \times I$, so is their integral $P$.
      Thus, we have established $(2) \Rightarrow (3)$.

      $(3) \Rightarrow (1)$ is immediate, so let
      us show $(1) \Rightarrow (2)$.
      Suppose $P = P^2$ is an idempotent permuton.
      Fix $A \subset I$ Borel, and note that
      \begin{align*}
         &\int_{x \in I} \int_{y \in I} (p_x(A) - p_y(A))^2 dp_x(y) dx \\
         &= \int_{x \in I} (p_x(A))^2 \int_{y \in I} dp_x(y) dx + \int_{x \in I} \int_{y \in I} (p_y(A))^2 dp_x(y) dx \\
         &- 2 \int_{x \in I} p_x(A) \int_{y \in I} p_y(A) dp_x(y) dx \\
         &= \int_{x \in I} (p_x(A))^2 dx + \int_{y \in I} (p_y(A))^2 dy \\ 
         &  -2 \int_{x \in I} p_x(A) (p \circ p)_x(A) dx \\
         &=0.
      \end{align*}
      The first equality is just using distributivity and linearity.
      In the second equality, we used that %$dx$ is a stationary measure for $P$, i.e. 
      $\int dp_x(y) dx = dy$.
      For the last equality, we use that $p_x = (p \circ p)_x$ for $\mathcal{L}$-a.e. $x$.
      
      Now, since the above is an integral of a nonnegative function, we see that we must have
      \[
         \int_{y \in I} (p_x(A) - p_y(A))^2 dp_x(y) = 0
      \]
      for $\mathcal{L}$-a.e. $x$.
      Since \emph{this} is the integral of a nonnegative function, this means that for all such $x$,
      we have $(p_x(A) - p_y(A))^2 = 0$ for $p_x$-a.e. $y$.
      That is,
      \[
         p_x \left( \left\{ y \in [0,1] : p_y(A) = p_x(A) \right\} \right) = 1.
      \]
      
      Now, let $A$ vary over %$\mathcal{D}$ a countable $\pi$-system
      %generating the Borel $\sigma$-algebra of $I$\footnote{
      %Recall that a $\pi$-\emph{system} is just a nonempty %collection of subsets
      %which is closed under finite intersections. The key property we use is that
      %if two probability measures agree on a $\pi$-system, they also agree
      %on the $\sigma$-algebra generated by that $\pi$-system.
      %In the case $I = [0,1]$ 
      %we can take $\mathcal{D} = \{ [a, b] : a, b \in I \cap \mathbb{Q} \}$;
      %in general, we can take any countable generating set for the $\sigma$-algebra
      %and adjoin to it the collection of all finite intersections of its members, which will still be countable.}.
      all elements of $\mathcal{D} = \{ [a,b] : a,b \in \mathbb{Q} \cap [0,1] \}$.
      Since this is a countable collection, we see that for $\mathcal{L}$-a.e. $x \in [0,1]$, 
      we have
      \[
         p_x \left( \left\{ y \in [0,1] : p_y(A) = p_x(A) \mbox{ } \forall A \in \mathcal{D} \right\} \right) = 1.
      \]
      and hence
      \[
         p_x \left( \left\{ y \in [0,1] : p_y = p_x\right\} = 1 \right) = 1,
      \]
      that is, $P$ satisfies (2), as desired.
   \end{proof}

      Next, let us
      recall the concept of \emph{conditional expectation operators
      with respect to a sub-$\sigma$-algebra}. 

      \begin{definition}
      Let $(\Omega, \mathcal{F}, \mu)$ be a probability space and $\mathcal{F}' \subset \mathcal{F}$
      is a sub-$\sigma$-algebra. 
      For any $f \in L^1(\Omega, \mathcal{F}, \mu)$,
      the \emph{conditional expectation} of $f$
      with respect to $\mathcal{F}'$
      is the unique (up to $\mu$-a.s. equality)
      function $\E[f | \mathcal{F}'] \in L^1(\Omega, \mathcal{F},\mu)$
      satisfying
      \begin{enumerate}
          \item $\E[f | \mathcal{F}']$ is measurable with
          respect to $\mathcal{F}'$,
          \item for every $A \in \mathcal{F}'$ we have
      \[
         \int_\Omega \ind_A \E[ f | \mathcal{F}'] d\mu = 
         \int_\Omega \ind_A f d\mu.
      \]
      \end{enumerate}
      \end{definition}
      For a proof of existence and uniqueness of conditional expectations, see e.g. Section 34 of Billingsley \cite{Billingsley}.
      It is straightforward to verify that,
      for each fixed sub-$\sigma$-algebra $\mathcal{F}'$,
      the map $f \mapsto \E[f | \mathcal{F}']$
      defines a bounded linear operator on $L^1(\Omega,\mathcal{F},\mu)$.
      Note also that if $f$
      is already $\mathcal{F}'$-measurable,
      then $\E[f|\mathcal{F}'] = f$.

      Now we will show that each idempotent
      permuton is in fact a conditional expectation operator:

    \begin{prop} \label{idempotent permutons are condition expectations}
         Let $P \in \mathrm{Perm}$ and suppose $P^2 = P$. 
         Then there exists a sub-$\sigma$-algebra $\sigma(P)$
         of the Borel $\sigma$-algebra $\mathcal{B}$ on $I$ such that
         for every $f \in L^1(I, \mathcal{L})$, we have
         \[
            \widehat{P}f = \E[ f | \sigma(P) ]
         \]
         $\mathcal{L}$-a.s. That is, the operator $\widehat{P}$ is equal to the conditional expectation operator with respect to $\sigma(P)$.
      \end{prop}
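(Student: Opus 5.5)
The natural candidate is the \emph{invariant} $\sigma$-algebra
\[
   \sigma(P) := \{ A \in \mathcal{B} : \widehat{P}\ind_A = \ind_A \ \mathcal{L}\mbox{-a.s.} \}.
\]
This is the family of Borel sets whose indicator is fixed by $\widehat{P}$. The proof then has three steps: show $\sigma(P)$ is a $\sigma$-algebra, show that $\widehat{P}f$ is $\sigma(P)$-measurable, and verify the defining integral identity of conditional expectation.

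First, by \cref{idempotent permutons kernel condition} we may assume $P = P^\dagger$. Then $p^\dagger = p$ a.e., so $\widehat{P}f(x) = \int f\,dp_x$. We also have the kernel condition $p_x(\{y : p_y = p_x\}) = 1$ for a.e. $x$.

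Closure of $\sigma(P)$ under complements follows from $\widehat{P}1 = 1$ and linearity. Closure under countable increasing unions follows from monotone convergence applied to $\int \ind_{A_n} dp_x$. For intersections, use the characterization $\widehat{P}\ind_A = \ind_A$ iff $p_x(A) \in \{0,1\}$ with $p_x(A) = \ind_A(x)$ for a.e. $x$. Given $A, B \in \sigma(P)$, for a.e. $x \in A \cap B$ we get $p_x(A) = p_x(B) = 1$, hence $p_x(A \cap B) = 1$. For a.e. $x \notin A \cap B$, one of $p_x(A)$ or $p_x(B)$ vanishes, so $p_x(A \cap B) = 0$.

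Next, I show $\widehat{P}f$ is $\sigma(P)$-measurable modulo null sets. It suffices to show that for $g = \widehat{P}f$ and each rational $t$, the set $A_t = \{g > t\}$ lies in $\sigma(P)$. The key fact is that $g$ is essentially constant along $p_x$. For a.e. $x$ and $p_x$-a.e. $y$ we have $p_y = p_x$, hence $g(y) = \int f\,dp_y = \int f\,dp_x = g(x)$. Therefore $p_x(A_t) = \ind_{A_t}(x)$ for a.e. $x$, which says exactly that $\widehat{P}\ind_{A_t} = \ind_{A_t}$.

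One subtlety is that $g$ is only defined a.e. I will fix the version $g(x) = \int f\,dp_x$ for a fixed version of the kernel. The exceptional $p_x$-null sets are then controlled by stationarity: $\int p_x(N)\,dx = \mathcal{L}(N) = 0$ for any $\mathcal{L}$-null set $N$. Strictly speaking, $\sigma(P)$-measurability then holds after modification on a null set. Either the definition of conditional expectation is read up to null sets, or I replace $\sigma(P)$ by its completion within $\mathcal{B}$ by null sets. I expect this bookkeeping to be the main obstacle.

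Finally, I verify the integral identity. For $A \in \sigma(P)$ and bounded $f$, self-adjointness (item (2) of \cref{basic operator facts}) gives
\[
\int \ind_A \widehat{P}f = \langle \widehat{P}f, \ind_A\rangle = \langle f, \widehat{P}\ind_A\rangle = \langle f, \ind_A\rangle = \int \ind_A f.
\]
Combined with the measurability step, this shows $\widehat{P}f = \E[f|\sigma(P)]$ for bounded $f$. Both sides are $L^1$-contractions: for $\widehat{P}$ this is item (1) of \cref{basic operator facts}, and for conditional expectation it is standard. Since bounded functions are dense in $L^1$, the identity extends to all $f \in L^1(I,\mathcal{L})$.
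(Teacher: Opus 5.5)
Your proof is correct, but it takes a genuinely different route from the paper.

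\textbf{The paper's route.} The paper defines $\sigma(P)$ as the $\sigma$-algebra generated by the kernel map $x \mapsto p_x$, i.e.\ $\{\{x : p_x \in B\} : B \in \mathcal{A}\}$. With that choice the $\sigma$-algebra axioms are automatic. Measurability of $\widehat{P}\ind_A(x) = p_x(A)$ is also immediate, and general $f$ follows via simple functions and limits. The kernel condition $p_x(\{y : p_y = p_x\}) = 1$ is spent on the integral identity: both $\int_A \widehat{P}f$ and $\int_A f$ are shown to equal the integral of $f(y)$ over $\{(x,y) : p_x = p_y \in B\}$.

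\textbf{Your route.} Your invariant $\sigma$-algebra $\{A : \widehat{P}\ind_A = \ind_A\}$ moves the work around. You have to verify the $\sigma$-algebra axioms by hand; your complement, intersection and increasing-union argument does this correctly. The kernel condition is then spent on showing that the superlevel sets of $\widehat{P}f$ are invariant. The integral identity collapses to one line of self-adjointness.

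\textbf{What your choice buys.} It is intrinsic: it depends only on the operator $\widehat{P}$, not on a chosen version of the kernel. It is exactly the family of sets whose indicators lie in the range of the projection $\widehat{P}$. It is also precisely the paper's $\sigma(P)$ augmented by the Borel null sets. This fits the later remark that sub-$\sigma$-algebras with the same completion give the same idempotent permuton.

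\textbf{The bookkeeping you flag is not a real obstacle.} Every Borel $\mathcal{L}$-null set $N$ already lies in your $\sigma(P)$, since stationarity gives $p_x(N) = 0$ for a.e.\ $x$. Take a Borel version of a bounded $f$; then $g(x) = \int f\,dp_x$ is Borel, and your argument shows its superlevel sets are invariant. So $g$ is $\sigma(P)$-measurable outright, with no completion or modification needed.
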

      \begin{proof}
      First, let us construct $\sigma(P)$.
      Let $\mathcal{A}$ be the $\sigma$-algebra on $\Prob(I)$
      generated by all sets of the form
      \[
          \{ \mu \in \Prob : \mu(A) \in B \},
      \]
      where $A \subseteq I$ and $B \subseteq \R$ are Borel.
      Then, for any $P \in \Perm$, by definition of a Markov kernel,
      the map $p: (I, \mathcal{B}) \to (\Prob(I), \mathcal{A}), x \mapsto p_x$
      is measurable.
      
      Now, given an idempotent permuton $P \in \mathrm{Perm}$, we define $\sigma(P)$ to be the 
      sub-$\sigma$-algebra
      of $\mathcal{B}$ ``generated'' by $p: I \to \mathrm{Prob}(I)$, i.e.
      \[
         \sigma(P) := \{ p^{-1}(B) : B \in \mathcal{A} \}
         = \{ \{x \in I : p_x \in B \} : B \in \mathcal{A} \}.
      \]
      
         Let $f \in L^1(I, \mathcal{L})$.
         By uniqueness of conditional expectation, it suffices to prove that:
         \begin{enumerate}
            \item $\widehat{P}f$ is $\sigma(P)$-measurable.
            \item For any $A \in \sigma(P)$ we have $\int_{x \in A} (\widehat{P}f)(x) dx = \int_{x \in A} f(x) dx$.
         \end{enumerate}
        We several times below will use that by \cref{idempotent permutons kernel condition}, $P$ is symmetric,
        and hence $(p^\dagger)_x = p_x$ $\mathcal{L}$-a.s.
        
      To see (1), first assume that $f = \ind_A$
      is the indicator of some Borel set $A \subseteq I$. In that case,
      for any $-\infty<a<b<\infty$ we have
      \[
         (\widehat{P}f)^{-1}(a,b) = \left\{ x \in I : \int_{y \in I} \ind_A(y) dp_x(y) \in (a,b) \right\}
         = p^{-1} \{ \nu \in \Prob(I) : \nu(A) \in (a,b) \} \in \sigma(P),
      \]
      so $\widehat{P}f$ is indeed $\sigma(P)$-measurable.
      Linearity of $\widehat{P}$ then shows that $\widehat{P}f$
      is $\sigma(P)$-measurable whenever $f$ is a 
      simple function (sum of indicator functions).
      In the case of general $f \in L^1(I, \mathcal{L})$, we can find simple functions $f_n \to f$ in $L^1(I, \mathcal{L})$,
      whence $\widehat{P} f_n \to \widehat{P} f$ in $L^1(I, \mathcal{L})$, in particular, converging pointwise $\mathcal{L}$-a.e.
      along a subsequence. By our previous argument, each $\widehat{P} f_n$ is $\sigma(P)$-measurable,
      so $\widehat{P}f$ is as well, as the pointwise limit of a sequence of $\sigma(P)$-measurable functions.
      
      Next, let us show (2). Fix $A \in \sigma(P)$ and note that there must
      exist some $B \in \mathcal{A}$ such that $A = \{ x \in I : p_x \in B \}$.
      We compute
      \begin{align*}
         \int_{x \in A} (\widehat{P}f)(x) dx
         = \int_{x \in A} \int_{y \in I} f(y) dp_x(y) dx 
         = \int_{p_x \in B} \int_{y \in I} f(y) dp_x(y) dx.
      \end{align*}
      
      On the other hand, using $p \circ dx = dx$, we compute
      \begin{align*}
         \int_{x \in A} f(x) dx = \int_{x \in I} \int_{y \in A} f(y) dp_x(y) dx 
         = \int_{x \in I} \int_{p_y \in B} f(y) dp_x(y) dx.
      \end{align*}
      
      But since $P$ is an idempotent permuton, 
      by \cref{idempotent permutons kernel condition}, for $\mathcal{L}$-a.e. $x$ we have $p_y = p_x$ for $p_x$-a.e. $y$,
      and therefore
      both integrals above are equal to the integral
      \[
         \int_{(x,y) : p_x = p_y,p_x \in B, p_y \in B} f(y) dp_x(y) dx.
      \]
      %In other words, since $P$ is idempotent, if we consider a Markov chain $(X_n)_{n \in \N}$
      %with transition kernel $P$ and $X_0 \sim \mathcal{L}$, then almost surely we have
      %$p_{X_n} = p_{X_0}$ for all $n \in \N$. If we interpret the integrals above in this context,
      %we see that
      %\[
      %   \int_{x \in A} (Pf)(x) dx = \E[ f(X_2) \ind_{p_{X_0} \in B} ]
      %\]
      %and
      %\[
      %   \int_{x \in A} f(x) dx = \E[ f(X_2) \ind_{p_{X_1} \in B} ].
      %\]
      %The above observation tells us that both of these are equal to
      %\[
      %   \E[ f(X_2) \ind_{p_{X_0} = p_{X_1} \in B} ]
      %\]
      %and in particular are equal. 
      So we have shown (2), as desired.
      \end{proof}
     
     From here, it is easy to establish a fact we use later:
     \begin{prop} \label{characterization of uniform}
         Let $\Pi = \Pi^2$ be an idempotent permuton, and suppose that for any $A, B$ Borel with
         $\mathcal{L}(A), \mathcal{L}(B) > 0$ we have $\Pi(A \times B) > 0$. Then $\Pi = U$.
     \end{prop}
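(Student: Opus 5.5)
By \cref{idempotent permutons are condition expectations}, $\widehat{\Pi}$ is the conditional expectation operator $f \mapsto \E[f \mid \sigma(\Pi)]$. The plan is to show that the positivity hypothesis forces $\sigma(\Pi)$ to be trivial modulo $\mathcal{L}$-null sets. Then $\widehat{\Pi} f = \int_I f\,dx$ for every $f$, which is exactly $\widehat{U}$, and injectivity (item (3) of \cref{basic operator facts}) gives $\Pi = U$.

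The key step is as follows. Take any $A \in \sigma(\Pi)$ and set $B = I \setminus A$. Since $\ind_A$ is $\sigma(\Pi)$-measurable, we have $\widehat{\Pi}\ind_A = \E[\ind_A \mid \sigma(\Pi)] = \ind_A$. By item (2) of \cref{basic operator facts},
\[
    \Pi(B \times A) = \int_{I\times I} \ind_B(x)\ind_A(y)\, d\Pi(x,y) = \pair{\widehat{\Pi}\ind_B}{\ind_A}.
\]
Moving to the adjoint and using symmetry ($\Pi^\dagger = \Pi$, by \cref{idempotent permutons kernel condition}), this equals $\pair{\ind_B}{\widehat{\Pi}\ind_A} = \pair{\ind_B}{\ind_A} = \mathcal{L}(A \cap B) = 0$. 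Alternatively one can avoid symmetry by pairing in the order $\Pi(A\times B) = \pair{\widehat{\Pi}\ind_A}{\ind_B} = \pair{\ind_A}{\ind_B} = 0$ directly. Either way, the hypothesis forces $\mathcal{L}(A) = 0$ or $\mathcal{L}(B) = 0$. So every set in $\sigma(\Pi)$ has measure $0$ or $1$.

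It remains to check that a conditional expectation with respect to an $\mathcal{L}$-trivial $\sigma$-algebra is the constant $\int f\,dx$. An $\mathcal{F}'$-measurable function $g$ with $\mathcal{F}'$ trivial is a.s. constant: each level set $\{g > t\}$ has measure $0$ or $1$, so $g$ equals a.s. the supremum of those $t$ with $\mathcal{L}\{g>t\}=1$. The defining property with the set $A = I$ then identifies the constant as $\int f\,dx$. Finally, $\widehat{U} f = \int f\,dx$, because the kernel of $U$ is $u_x = \mathcal{L}$, so $\widehat{\Pi} = \widehat{U}$ as operators on $L^1$ (or on $L^2$), and injectivity concludes.

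I expect no real obstacle here. The only care needed is measure-theoretic: we use that $\ind_A$ for $A \in \sigma(\Pi) \subseteq \mathcal{B}$ is a legitimate Borel test function, and we pass from ``$\sigma(\Pi)$ is trivial'' to ``conditional expectation is the mean''.
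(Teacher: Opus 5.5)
Your proof is correct and is essentially the paper's argument: the paper runs the same computation $\Pi(A\times A^c)=\pair{\widehat{\Pi}\ind_A}{\ind_{A^c}}=\pair{\ind_A}{\ind_{A^c}}=0$ for $A\in\sigma(\Pi)$, phrased as a contrapositive rather than directly. The step you spell out (an $\mathcal{L}$-trivial $\sigma(\Pi)$ makes $\widehat{\Pi}$ the projection onto constants, i.e.\ $\widehat{U}$, and injectivity then gives $\Pi=U$) is exactly what the paper leaves to a footnote.
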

     \begin{proof}
         We show the contrapositive.
         Let $\sigma(\Pi)$ be the $\sigma$-algebra associated to $\Pi$,
         as in \cref{idempotent permutons are condition expectations}.
         Suppose that $\Pi \ne U$. 
         Then there exists some $A \in \sigma(\Pi)$ with $\mathcal{L}(A) \notin \{0,1\}$.\footnote{If all $A \in \sigma(\Pi)$
         have $\mathcal{L}(A) \in \{0,1\}$, then $\widehat{\Pi}$
         is projection onto ($\mathcal{L}$-a.e.) constant functions,
         which is precisely the operator $\widehat{U}$.}
         Since $\ind_A$
         is $\sigma(P)$-measurable, $\widehat{\Pi} \ind_A = \E[\ind_A | \sigma(\Pi)] = \ind_A$. Therefore
         \begin{align*}
             \Pi(A \times A^c) = \langle \Pi \ind_A, \ind_{A^c} \rangle  
             = \langle \ind_A, \ind_{A^c} \rangle = 0,
         \end{align*}
         i.e. there exist $A$ and $B=A^c$ with positive measure such that $\Pi(A \times B) = 0$.
     \end{proof}

\subsection{Proof of \cref{mixing criterion}}

In this subsection we prove the mixing criterion \cref{mixing criterion}.
We first show the following ``absolute continuity'' statement, which states that if $Q^n$ converges to an idempotent $\Pi$
and $Q$ transports some of $A$ to $B$ ($Q(A \times B) > 0$), then $\Pi$ also transports some of $A$ to $B$.

\begin{prop} \label{absolute continuity}
    Let $Q \in \Perm$
    and suppose that $Q^n \to \Pi$
    with $\Pi^2 = \Pi$ (i.e. $\Pi$ idempotent).
    Then, for any $A, B$ Borel,
        \[
            Q(A \times B) > 0
            \Rightarrow
            \Pi(A \times B) > 0.
        \]
\end{prop}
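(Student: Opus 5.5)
The plan is to use the conditional-expectation structure of $\Pi$ from \cref{idempotent permutons are condition expectations}, together with an invariance relation between $Q$ and $\Pi$, and to prove the contrapositive: if $\Pi(A \times B) = 0$, then $Q(A \times B) = 0$.

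\textbf{Step 1: invariance.} I first show $Q\Pi = \Pi$. By \cref{continuity fixing one component}, left composition by $Q$ is weakly continuous, so $Q^{n+1} = Q \cdot Q^n \to Q\Pi$. But $(Q^{n+1})$ is a subsequence-shift of $(Q^n)$, so it also converges to $\Pi$. Since the topology is metrizable, limits are unique, and hence $Q\Pi = \Pi$. By item (4) of \cref{basic operator facts} this reads $\widehat{Q}\widehat{\Pi} = \widehat{\Pi}$.

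\textbf{Step 2: enlarge $A$ and $B$ to $\sigma(\Pi)$-measurable sets.} Let $\mathcal{F} = \sigma(\Pi)$, so that $\widehat{\Pi} = \E[\,\cdot \mid \mathcal{F}]$ by \cref{idempotent permutons are condition expectations}; this operator is the self-adjoint projection on $L^2$. Write $a := \E[\ind_A \mid \mathcal{F}]$ and $b := \E[\ind_B \mid \mathcal{F}]$. By item (2) of \cref{basic operator facts},
\[
\Pi(A \times B) = \langle \widehat{\Pi}\ind_A, \ind_B \rangle = \langle \widehat{\Pi}\ind_A, \widehat{\Pi}\ind_B \rangle = \int_I a\, b \, dx .
\]
Assume $\Pi(A\times B)=0$. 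Since $a, b \ge 0$, we get $ab = 0$ a.e. Set $A' = \{a > 0\}$ and $B' = \{b > 0\}$. These sets are $\mathcal{F}$-measurable, and $\mathcal{L}(A' \cap B') = 0$. Moreover $A \subseteq A'$ up to a null set: $A'^c \in \mathcal{F}$, so the defining property of conditional expectation gives
\[
\mathcal{L}(A \cap A'^c) = \int_{A'^c} \ind_A \, dx = \int_{A'^c} a \, dx = 0 .
\]
The same argument gives $B \subseteq B'$ up to a null set.

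\textbf{Step 3: conclude.} Because $\ind_{A'}$ is $\mathcal{F}$-measurable, $\widehat{\Pi}\ind_{A'} = \ind_{A'}$. Step 1 then gives
\[
\widehat{Q}\ind_{A'} = \widehat{Q}\widehat{\Pi}\ind_{A'} = \widehat{\Pi}\ind_{A'} = \ind_{A'} .
\]
Hence
\[
Q(A' \times B') = \langle \widehat{Q}\ind_{A'}, \ind_{B'}\rangle = \langle \ind_{A'}, \ind_{B'} \rangle = \mathcal{L}(A' \cap B') = 0 .
\]
It remains to pass from $A' \times B'$ back to $A \times B$. The set $(A\times B)\setminus(A'\times B')$ is contained in $((A\setminus A')\times I)\cup(I\times(B\setminus B'))$. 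Both pieces have $Q$-measure zero because $Q$ has uniform marginals and $A\setminus A'$, $B\setminus B'$ are Lebesgue-null. Therefore $Q(A \times B) \le Q(A' \times B') = 0$, which is the contrapositive of the claim.

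\textbf{Main obstacle.} I expect the delicate point to be the invariance step, specifically making sure the convergence $Q^n \to \Pi$ is used only in the weak sense, via one-sided continuity, so the argument does not need strong convergence. The rest is bookkeeping with conditional expectations and the uniform marginals.
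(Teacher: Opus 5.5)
Your proof is correct. It shares its first step with the paper (deriving $Q\Pi=\Pi$ from one-sided weak continuity) but then argues differently.

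The paper proves the implication directly with Markov kernels. From $Q(A\times B)>0$ it extracts $\epsilon>0$ and a set $C\subseteq A$ of positive measure with $q_x(B)\ge\epsilon$ for $x\in C$. It then bounds $\Pi(A\times B)=(Q\Pi)(A\times B)\ge\epsilon\,\Pi(C\times C)=\epsilon\|\widehat{\Pi}\ind_C\|_2^2>0$. This uses only that $\Pi$ is symmetric and idempotent (\cref{idempotent permutons kernel condition}); the conditional-expectation description of $\widehat{\Pi}$ is never invoked.

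You instead rely on \cref{idempotent permutons are condition expectations}. The identity $\widehat{Q}\widehat{\Pi}=\widehat{\Pi}$ says $\widehat{Q}$ fixes every $\sigma(\Pi)$-measurable indicator, so $Q$ moves no mass between disjoint $\sigma(\Pi)$-measurable sets. You reduce to this case by replacing $A$ and $B$ with their hulls $\{\E[\ind_A\mid\sigma(\Pi)]>0\}$ and $\{\E[\ind_B\mid\sigma(\Pi)]>0\}$, which are a.e.\ disjoint exactly when $\Pi(A\times B)=0$.

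The paper's route is more elementary and yields an explicit quantitative lower bound on $\Pi(A\times B)$. Yours avoids the $\epsilon$-selection and the kernel computations, and it exposes the underlying structure: $\sigma(\Pi)$-measurable sets are invariant under $Q$. That fact would even let the mixing argument bypass this proposition and \cref{characterization of uniform}. If $\Pi\neq U$, pick $A\in\sigma(\Pi)$ with $0<\mathcal{L}(A)<1$. Then $\widehat{Q}\ind_A=\ind_A$ gives $Q(A\times A^c)=0$, which directly contradicts the positivity hypothesis on $Q=P^2$.
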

\begin{proof}
    First, note that,
    since left multiplication by a fixed permuton is continuous
    (\cref{continuity fixing one component}), we have
    \[
        Q\Pi = Q \lim_{n \to \infty} Q^n
        = \lim_{n \to \infty} Q^{n+1} = \Pi.
    \]
    Suppose that for some $A, B \subset I$ Borel we have
    $Q(A \times B) > 0$.
    Since
    \begin{align*}
        Q(A \times B) &=
        \int_{x \in I} \int_{y \in I} \ind_A(x) \ind_B(y) d(q_x)(y) dx \\
        &=
        \int_{x \in A} q_x(B) dx,
    \end{align*}
    there exists $\epsilon > 0$ and $A' \subset A$
    Borel with $\mathcal{L}(A') > 0$ such 
    that for all $x \in A'$ we have
    \[
        q_x(B) \ge \epsilon.
    \]

    Now, using $Q \Pi = \Pi$,
    we compute
    \begin{align*}
        \Pi(A \times B) &= (Q\Pi)(A \times B) \\
        &= \int_{x \in I} \int_{y \in I}
        \ind_A(x) \ind_B(y) d(q \circ \pi)_x(y) dx \\
        &= \int_{x \in I} \int_{y \in I} \int_{z \in I}
        \ind_A(x) \ind_B(z) dq_y(z) d\pi_x(y) dx \\
        &\ge
        \int_{x \in I} \int_{y \in I} \int_{z \in I}
        \ind_{A'}(x) \ind_{A'}(y) \ind_B(z) dq_y(z) d\pi_x(y) dx \\
        &= \int_{x \in I} \int_{y \in I}
        \ind_{A'}(x) \ind_{A'}(y) q_y(B) d\pi_x(y) dx \\
        &\ge
        \epsilon \int_{x \in I} \int_{y \in I} \ind_{A'}(x) \ind_{A'}(y) d\pi_x(y) dx \\
        &= \epsilon \pair{\widehat{\Pi} \ind_{A'}}{\ind_{A'}} \\
        &= \epsilon \pair{\widehat{\Pi} \ind_{A'}}{\widehat{\Pi} \ind_{A'}} \\
        &= \epsilon \|\widehat{\Pi} \ind_{A'} \|_2^2 > 0.
    \end{align*}
    In the second-to-last line, we used that
    $\Pi$ is idempotent and symmetric.
    In the last line, we used that $\widehat{\Pi} \ind_{A'} \ne 0$,
    which is the case because
    \[
        \pair{\widehat{\Pi} \ind_{A'}}{1} =\pair{\ind_{A'}}{\widehat{\Pi} 1} =
        \pair{\ind_{A'}}{1}
        = \mathcal{L}(A') > 0.
    \]
\end{proof}

Next, since we want to apply the above to $Q=P^2$, we show that our assumption
on $P$ from \cref{mixing criterion}
implies the same property for $P^2$.

\begin{prop} \label{P to P2}
    Let $P \in \Perm$
    and suppose that for every
    $A,B \subset I$ Borel with $\mathcal{L}(A),\mathcal{L}(B)>0$
    we have $P(A \times B) > 0$.
    Then for every $A,B \subset I$ Borel with $\mathcal{L}(A),\mathcal{L}(B)>0$,
    we also have $P^2(A \times B) > 0$.
\end{prop}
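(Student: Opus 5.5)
We will prove the statement by writing $P^2(A \times B)$ as an integral over an intermediate variable, using the kernel formula for composition, and then applying the hypothesis on $P$ twice. By \cref{prop:composition-formula} (with $Q = P$),
\[
    P^2(A \times B) = \int_{z \in I} (p^\dagger)_z(A)\, p_z(B)\, dz .
\]
Both factors are nonnegative measurable functions of $z$. So it suffices to find a set $C \subset I$ with $\mathcal{L}(C) > 0$ on which both $(p^\dagger)_z(A) > 0$ and $p_z(B) > 0$.

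First we control each factor separately. Set
\[
C_A := \{ z : (p^\dagger)_z(A) > 0 \}, \qquad C_B := \{ z : p_z(B) > 0 \}.
\]
These are Borel by measurability of Markov kernels. Since $P(A \times I^c_{\ldots})$-type considerations are awkward, we argue directly. By the disintegration of $P$ along its second coordinate,
\[
P(A \times C_A^c) = \int_{z \in C_A^c} (p^\dagger)_z(A)\, dz = 0 .
\]
If $\mathcal{L}(C_A^c) > 0$, this contradicts the hypothesis, because $\mathcal{L}(A) > 0$. Hence $\mathcal{L}(C_A) = 1$. Symmetrically,
\[
P(C_B^c \times B) = \int_{z \in C_B^c} p_z(B)\, dz = 0,
\]
so $\mathcal{L}(C_B) = 1$ as well.

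Therefore $C := C_A \cap C_B$ has full measure. The integrand $(p^\dagger)_z(A)\, p_z(B)$ is strictly positive on $C$, so the integral is strictly positive, and $P^2(A \times B) > 0$.

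The only step needing care is the identification $P(A \times D) = \int_D (p^\dagger)_z(A)\, dz$ for Borel $D$. This follows from the relation $dP(x,y) = d(p^\dagger)_y(x)\, dy$ established in the section on adjoints. Null-set ambiguities in the kernels are harmless, since everything is stated up to $\mathcal{L}$-a.e. equality.
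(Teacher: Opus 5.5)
Your proof is correct, but it takes a somewhat different route from the paper.

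\textbf{The paper's route.} The paper works with the kernel composition directly, writing $P^2(A\times B)=\int_{x\in A}\int_{y} p_y(B)\,dp_x(y)\,dx$. It uses $P(A\times B)>0$ to extract $\epsilon>0$ and a non-null set $A'$ on which $p_y(B)\ge\epsilon$. It then discards everything outside $A'$ and concludes $P^2(A\times B)\ge \epsilon\,P(A\times A')>0$ by a second application of the hypothesis.

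\textbf{Your route.} You instead use the symmetric formula of \cref{prop:composition-formula}, $P^2=\int_z (p^\dagger)_z\otimes p_z\,dz$. You apply the hypothesis in contrapositive form to $A\times C_A^c$ and to $C_B^c\times B$. This shows that both factors $(p^\dagger)_z(A)$ and $p_z(B)$ are strictly positive for $\mathcal{L}$-a.e.\ $z$.

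\textbf{Comparison.} Your version yields a cleaner and stronger intermediate fact. The hypothesis on $P$ is equivalent to ``$p_z(B)>0$ for a.e.\ $z$ whenever $\mathcal{L}(B)>0$'', and likewise for $p^\dagger$. Once you have this, positivity of $P^2(A\times B)$ is just positivity of the integral of an a.e.\ positive function, with no $\epsilon$ or localization needed. The paper's version avoids the adjoint kernel and the alternative composition formula. It reuses the lower-bound computation from \cref{absolute continuity} essentially verbatim, which is why it is phrased that way.

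One small editorial point: the garbled sentence beginning ``Since $P(A\times I^c_{\ldots})$-type considerations are awkward'' contributes nothing and should be deleted.
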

\begin{proof}
    Let $A,B \subset I$ be Borel with positive measure.
    The same argument as in the proof
    of \cref{absolute continuity}
    shows us that there exist $\epsilon > 0$
    and $A' \subset A$
    Borel with positive measure such that
    $p_x(B) \ge \epsilon$
    for all $x \in A'$.

    A similar computation as in the proof
    of \cref{absolute continuity}
    then shows that
    \begin{align*}
        P^2(A \times B)
        \ge
        \epsilon \int_{x \in I}
        \int_{y \in I}
        \ind_A(x) \ind_{A'}(y)
        dp_x(y) dx \\
        = \epsilon P(A \times A') > 0,
    \end{align*}
    as desired.
\end{proof}

At last we can prove our mixing criterion:
\begin{proof}[Proof of \cref{mixing criterion}]
    Let $P$ be a symmetric permuton such that
    for every Borel $A, B$ with $\mathcal{L}(A), \mathcal{L}(B) > 0$, $P(A \times B)>0$. By \cref{P to P2},
    $Q = P^2$ also has this property.

    Since $P$ is a symmetric permuton, 
    by \cref{even powers converge} 
    $(P^2)^n = Q^n$
    converges to an idempotent permuton $\Pi$.
    By \cref{absolute continuity}, we see that such that
    $\Pi(A \times B) > 0$ for all Borel $A,B$
    with $\mathcal{L}(A), \mathcal{L}(B) > 0$.

    But then by \cref{characterization of uniform},
    we have that $\Pi = U$.
    Since left multiplication by $P$
    is a (weakly) continuous map on $\Perm$ (\cref{continuity fixing one component}),
    we therefore see that
    \[
        \lim_{n \to \infty} P^{2n+1}
        = P \lim_{n \to \infty} (P^2)^n
        = PU = U.
    \]
    Since $P^{2n+1}$ and $P^{2n}$ both converge to $U$
    as $n \to \infty$, we then have $P^n \to U$
    as $n \to \infty$, as desired.
\end{proof}

\section{Application: mixing in mean of separable permutations} \label{sec:second application}

Although \cref{first application} gives us a formula for computing the mean of the limit of the composition of any finite number of independent separable permutations,
in practice, the computation
becomes harder and harder the larger the number of compositions becomes. However, in the limit, we have the following qualitative result: as the number of compositions tends to infinity, the mean limit permuton tends to the uniform permuton;
this will complete the proof of \cref{application}.

\begin{prop} \label{separable mixing}
    Let $P_1, P_2, P_3,...$ be an i.i.d. sequence of Brownian separable permutons. Then
    \[
        \lim_{k \to \infty} 
        \E[P_1 \cdots P_k]
        = U.
    \]
\end{prop}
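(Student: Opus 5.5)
By \cref{expectation composition} (as already used in \cref{first application}), $\E[P_1 \cdots P_k] = \E[P]^k$, where $P$ is a Brownian separable permuton. It therefore suffices to show that $M := \E[P]$ is mixing. The plan is to verify the two hypotheses of \cref{mixing criterion} for $M$: symmetry, and positivity on products of sets of positive measure.

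\textbf{Symmetry.} The natural route is a distributional symmetry of $P$. The uniform separable permutation is invariant in law under inversion $\sigma \mapsto \sigma^{-1}$, because the class of separable permutations (avoiding $2413$ and $3142$) is closed under inverses. Hence $P_{\sigma_N}^\dagger = P_{\sigma_N^{-1}}$ has the same law as $P_{\sigma_N}$. The adjoint map is weakly continuous, so passing to the limit gives $P^\dagger \overset{d}{=} P$. Since $\E[P^\dagger] = \E[P]^\dagger$, we get $M^\dagger = M$. Alternatively, one could read symmetry off the explicit density $\alpha_{1/2}$ of Theorem 1.7 in \cite{Maazoun2017}, checking $\alpha(x,y) = \alpha(y,x)$.

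\textbf{Positivity.} By \cref{first application} (the case $k=1$), $M$ has density $\alpha = \alpha_{1/2}$ with respect to Lebesgue measure. Then
\[
M(A\times B) = \int_{A\times B} \alpha \, dx\, dy .
\]
If $\alpha > 0$ $\mathcal{L}^2$-a.e. on $I^2$, then $M(A \times B) > 0$ whenever $\mathcal{L}(A), \mathcal{L}(B) > 0$, because $A \times B$ then has positive $\mathcal{L}^2$-measure. So the task reduces to showing that $\alpha_{1/2}$ is a.e. positive. I would try to verify this from Maazoun's explicit integral formula, which expresses the density as an integral of manifestly positive quantities, such as Brownian excursion densities, over a nondegenerate range. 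This step is the main obstacle, since it depends on the precise form of $\alpha$ in \cite{Maazoun2017}.

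If the formula is awkward to use, I would fall back on a weaker property. It is enough to show that $\alpha$ is bounded below by a positive constant on some open neighbourhood of the diagonal and on a neighbourhood of the antidiagonal. Then $M \ge c\,\mathcal{L}^2$ restricted to a band, so $M^2$ (or some power $M^m$) has a density bounded below on all of $I^2$. Applying \cref{mixing criterion} to $M^m$, together with \cref{continuity fixing one component}, handles the intermediate powers $M^{mn+r}$ for $0 \le r < m$ in the same way as the odd powers are handled in the proof of \cref{mixing criterion}.

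With both hypotheses in hand, \cref{mixing criterion} gives $M^k \to U$. Hence $\E[P_1\cdots P_k] = M^k \to U$, which completes the proof.
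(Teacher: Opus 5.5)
Your proposal is correct and follows the paper's proof: you reduce via \cref{expectation composition} to showing that $\E[P]$ is mixing, then verify the hypotheses of \cref{mixing criterion} using the symmetric, a.e.\ strictly positive density from Theorem 1.7 of \cite{Maazoun2017}. Your inversion-invariance argument for symmetry is a nice alternative that avoids the formula, but the positivity step still rests on Maazoun's density, exactly as in the paper.
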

\begin{proof}
    By \cref{expectation composition}, we know that
    \[
        \E[P_1 \cdots P_k]
        = \E[P_1]^k.
    \]
    Thus it suffices
    to show that $\E[P_1]$
    satisfies the hypotheses of
    \cref{mixing criterion}.
    But Theorem 1.7 of Maazoun \cite{Maazoun2017}
    gives a formula for the density
    of $\E[P_1]$ with respect
    to Lebesgue measure which 
    is symmetric in $x$ and $y$
    and $\mathcal{L}^2$-a.e. strictly positive,
    and so it is immediate to verify
    the hypotheses of \cref{mixing criterion}.
\end{proof}

\begin{proof}[Proof of \cref{application}]
    \cref{application} follows immediately from
    \cref{first application} and \cref{separable mixing}.
\end{proof}

%Although this
%does give us a more
%concrete description of
%our limit, much combinatorial information is lost when passing from a random permuton to its expectation.
%Subpattern densities are \emph{not}
%linear functionals on $\Perm$,
%and hence one cannot extract expected subpattern densities from $P$ from $\E[P]$.\footnote{In fact, the expected subpattern densities of a random permuton $P$ entirely determine its distribution; see Theorem 2.5 and Proposition 2.9 of \cite{Bassino2020}.}
%However, we note that from our arguments we can obtain the following qualitative \emph{negative} result.

We have now established all of the main theorems mentioned in the introduction. 
Let us lastly give an example of how
the methods used to prove \cref{separable mixing}
can be applied to the question of which families of random permutons are closed under independent compositions. Specifically, we will show that one natural family of random permutons, the \emph{biased separable Brownian permutons}, are \emph{not} closed under independent compositions.

For each $p \in (0,1)$,
let $\mu^p \in \Prob(\Perm)$
be the (distribution of the) \emph{$p$-biased separable Brownian permuton}
as defined in Section 5 of \cite{Bassino2020}.
Set $\mathcal{S} = \{ \mu^p : p \in (0,1)\}$.
Since $\mathcal{S}$
is a natural $1$-parameter family of random permutons,
one might wonder if $\mathcal{S}$ forms a semigroup, that is, if any product of an independent $p_1$-biased Brownian separable permuton and a $p_2$-biased Brownian separable permuton is a $p_3$-biased separable permuton for some $p_3 \in (0,1)$.
The next proposition states that this is \emph{not} the case.

\begin{prop}
    The set $\mathcal{S}$
    is \emph{not} closed under independent products.
    In fact, there exists $p \in (0,1)$
    such that if $P_1, P_2 \sim \mu^p$
    are independent,
    the distribution of $P_1 P_2$ does not lie in $\mathcal{S}$.
\end{prop}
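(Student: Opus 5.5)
We argue by contradiction using expectations, via the tools behind \cref{separable mixing}. Suppose that for every $p \in (0,1)$, the law of $P_1P_2$ with $P_1,P_2 \sim \mu^p$ independent equals $\mu^{q(p)}$ for some $q(p)\in(0,1)$. By \cref{expectation composition}, $\E_{\mu^p}[P]^2 = \E_{\mu^{q(p)}}[P]$. Write $M_p := \E_{\mu^p}[P] \in \Perm$. The identity $M_p^2 = M_{q(p)}$ is an equation between explicit permutons, and I will refute it for a single $p$. The most economical route is $p = 1/2$, where $M_{1/2}$ has the explicit density $\alpha_{1/2}$ from Theorem 1.7 of \cite{Maazoun2017}. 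Its square then has density $\alpha * \alpha$ by \cref{familiar special cases}(3), and this must be the density of some $M_q$.

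The key step is to find a linear functional that separates $M_p^2$ from the whole family $\{M_q\}$. One natural choice is the mass on the diagonal region: for a random permuton $\gr(F)$, $\int \ind(x\le y)\,d\gr(F)$ relates to first-order pattern statistics. A better choice is a quantity that is computable on both sides. For the biased Brownian separable permuton, the expected density of the pattern $12$ in $P$ equals $\int \ind(x_1<x_2,y_1<y_2)\, dP^{\otimes 2}$. This is not linear in $P$, so the functional must be taken on $M_q$ itself.

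I would therefore use the linear statistic $m(Q) := \pair{\widehat{Q}\phi}{\phi}$ with $\phi(x)=x-\tfrac12$, which measures correlation between the coordinates. Then $m(M_q) = \E_{\mu^q}[\mathrm{Cov}(X,Y)]$ under $(X,Y)\sim P$. By the self-similar (coalescent/Brownian excursion) description in \cite{Bassino2020}, this should be an explicit, monotone function of $q$, equal to $0$ at $q=1/2$ by the symmetry $\mu^{1/2}$ invariant under $y\mapsto 1-y$. For $p=1/2$ the same symmetry gives $m(M_{1/2}^2)=0$, which forces $q=1/2$ if $m(M_q)$ is injective in $q$. The contradiction then reduces to showing $M_{1/2}^2 \ne M_{1/2}$, i.e. that $M_{1/2}$ is not idempotent. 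This follows from the earlier machinery: $M_{1/2}$ is symmetric with a.e. positive density, so if it were idempotent, \cref{characterization of uniform} would give $M_{1/2}=U$. But $\alpha_{1/2}$ is not constant (it blows up near the corners, per Maazoun's formula), so $M_{1/2}\neq U$.

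The main obstacle is establishing that $q \mapsto m(M_q)$, or some other linear statistic of $M_q$, is injective on $(0,1)$. This is what lets me conclude $q=1/2$ from the symmetry. If monotonicity of the correlation is hard to extract from \cite{Bassino2020}, I would first try to avoid it. Observe that the map $y\mapsto 1-y$ sends $\mu^q$ to $\mu^{1-q}$, and that $P_1P_2$ for $p=1/2$ is invariant in law under $y\mapsto 1-y$, since only $P_1$'s second coordinate is flipped and $\mu^{1/2}$ is invariant. Hence $\mu^{q}=\mu^{1-q}$, and distinctness of $\mu^q$ for $q\ne 1/2$, which should be immediate from differing expected pattern densities of $12$ versus $21$, forces $q=1/2$. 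The idempotency contradiction above then finishes the proof.
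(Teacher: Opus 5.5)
Your argument is correct once you use the reflection step of your last paragraph, and it takes a genuinely different route from the paper's.

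The paper assumes closure for every $p$ and iterates squaring to get $\E[\mu^p]^{2^n}\in\E[\mathcal{S}]$ for all $n$. It then invokes \cref{mixing criterion} (spectral theorem, idempotent characterization, absolute continuity) to get $\E[\mu^p]^{2^n}\to U$. Finally it shows $U$ is not in the weak closure of $\E[\mathcal{S}]$. That step uses continuity of $p\mapsto\mu^p$ on $[0,1]$ with endpoint limits $\Delta$ and the anti-diagonal, continuity of the barycenter map, and non-constancy of every $\alpha_p$. The argument only shows that \emph{some} $p$ fails.

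You instead fix $p=1/2$ and use a single squaring step. Flipping the second coordinate is left multiplication by $R=\gr(x\mapsto 1-x)$, and $R(P_1P_2)=(RP_1)P_2$, so the law of $P_1P_2$ is flip-invariant. If that law is $\mu^q$, this forces $\mu^q=\mu^{1-q}$, hence $q=1/2$ by comparing expected densities of the pattern $12$. Then $M_{1/2}^2=M_{1/2}$, so $M_{1/2}$ is idempotent. Since $M_{1/2}$ has a.e.\ positive density, \cref{characterization of uniform} gives $M_{1/2}=U$, contradicting non-constancy of $\alpha_{1/2}$.

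What each approach buys:
\begin{itemize}
\item Your route is shorter. It avoids the spectral theorem, the mixing criterion, and the endpoint continuity and compactness analysis.
\item It needs non-constancy of $\alpha_p$ only at $p=1/2$, and it produces the explicit witness $p=1/2$.
\item The price is two extra facts about the biased family, both immediate from the signed-excursion construction: reflection sends $\mu^q$ to $\mu^{1-q}$, and $\mu^q$ and $\mu^{1-q}$ have different expected $12$-densities when $q\neq 1/2$.
\item The paper's argument uses no symmetry. It would apply to any parametrized family whose means are symmetric with positive density and whose closure of means avoids $U$.
\end{itemize}

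The covariance statistic $m$ in your third paragraph, with its unproved injectivity in $q$, is superseded by the reflection argument and can be dropped.
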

\begin{proof}
    Suppose to the contrary that for every $p \in (0,1)$, the independent product of $\mu^p$
    with itself lies in $\mathcal{S}$.
    By \cref{expectation composition},
    this means in particular that for any $p \in (0,1)$,
    \[
    \E[\mu^{p}]^2 \in \E[\mathcal{S}] := \{ \E[\mu^p] : p \in (0,1)\}.
    \]
    By induction $\E[\mu^p]^{2^n} \in \E[\mathcal{S}]$
    for all $n \in \N$.
    By Theorem 1.7
    in \cite{Maazoun2017},
    we see that $\E[\mu^p]$
    is symmetric and
    has strictly positive density almost everywhere on $I^2$.
    Therefore by \cref{mixing criterion},
    $\lim_{n \to \infty} \E[\mu^p]^{2^n} = \lim_{n \to \infty} \E[\mu^p]^n = U$.
    But we claim that 
    $U$ is not in the weak closure of $\E[\mathcal{S}]$;
    once we have this, we will have a contradiction, and the proof is complete.

    For this, note that
    the map
    \[
    p \mapsto \begin{cases}
        \Delta & \mbox{ if } p = 0 \\
        \mu^p & \mbox{ if } p \in (0,1) \\
        \int(\delta_x \otimes \delta_{1-x})dx & \mbox{ if p = 1}
    \end{cases}
    \]
    is continuous.
    Continuity in $p \in (0,1)$ is clear from Definition 5.1 and Theorem 2.5 of \cite{Bassino2020}.
    The fact that the above are the correct limits at the endpoints $\{0,1\}$ is discussed for instance in Section 7.2.1.2 of
    \cite{MaazounThesis}.
    Thus, the above
    and \cref{barycenter continuity} show that
    $\E[\mathcal{S}] \cup \{\Delta, \int (\delta_x \otimes \delta_{1-x})dx\}$
    is a weak closed subset of $\Perm$, as the continuous image of the compact set $[0,1]$.
    Referring to the densities given by Theorem 1.7 of \cite{Maazoun2017},
    we see that $U$ does not lie in this set, we are done.\footnote{To see that there is no $p \in (0,1)$ with $\E\mu^p = U$,
    it suffices to show that each $\alpha_p$
    is not almost-everywhere equal to the constant function $1$.
    The integral description of $\alpha_p$
    in Theorem 1.7 of \cite{Maazoun2017}
    makes it easy to find a point where $\alpha_p$
    is differentiable and has nonzero derivative,
    and this is sufficient.
    }
\end{proof}

It in fact seems plausible that $\mathcal{S} \mathcal{S}$ is \emph{disjoint} from $\mathcal{S}$, although it is not as clear how to show this; sufficient understanding of convolutions of the functions $\alpha_p$
given in Theorem 1.7 of \cite{Maazoun2017}
could achieve this. This example raises the question of whether we should expect any of the natural random permuton families from combinatorics to be closed under independent composition; understanding this question will clarify the relationship between subpattern densities of permutations and composition.

\section{Remarks on the algebra of the monoid $\Perm$} \label{sec:algebra}
Here, we collect most of the algebraic facts we have shown about $\Perm$ into one proposition,
for the convenience of the interested reader.
Below, for brevity, we denote by $\mathrm{End} \subset \Perm$
 the image of $\mathrm{End}(I,\mathcal{L})$
under the natural ``graph'' map $\gr: \mathrm{End}(I,\mathcal{L}) \to \Perm$ (see \cref{sec:strong convergence}).
Similarly we denote by $\mathrm{Aut} \subset \Perm$
the image under $\gr$ of the set $\mathrm{Aut}(I,\mathcal{L})$ of \emph{invertible} measure-preserving endomorphisms of $(I, \mathcal{L})$.

The following observations are either straightforward to show or have already been shown above:
\begin{prop}
    The following hold:
    \begin{enumerate}
    \item $\Perm$ is a monoid with identity element $\Delta$. $U$ is a zero element for $\Perm$.
    \item $P \in \Perm$
    has a right inverse if and only if
    $P \in \End$, and in this case
    $PP^\dagger = \Delta$.
    \item $P \in \Perm$
    has a left inverse if and only if
    $P \in \End^\dagger$, i.e. $P^\dagger \in \End$, and in this case
    $P^\dagger P = \Delta$.
    \item $P$ has a two-sided
    inverse if and only if $P \in \Aut$,
    and in this case 
    the inverse is unique and $P^{-1} = P^\dagger$.
    \item $\Aut = \End \cap \End^\dagger$.
    \item $P$ is left-invertible if and only if
    $\widehat{P}$ is an $L^2$-isometry.
    \item $P$ is invertible
    if and only if $\widehat{P}$ is unitary.
    \item Suppose $\Pi \in \Perm$
    is idempotent, that is, $\Pi \Pi = \Pi$.
    Then $\Pi$ is also \emph{symmetric},
    that is, $\Pi = \Pi^\dagger$.
    Hence $\widehat{\Pi}$ is an \emph{orthogonal projection}.
    \item If $\Pi \in \Perm$
    is idempotent, there exists
    some sub-$\sigma$-algebra
    $\mathcal{F}$ of the Borel $\sigma$-algebra on $I$
    such that for all $f \in L^2(I,\mathcal{L})$ we have 
    $\Pi f = \E[ f | \mathcal{F} ]$.
    Conversely every sub-$\sigma$-algebra
    has conditional expectation operator
    arising from an idempotent permuton.
    Two sub-$\sigma$-algebras
    correspond to the same permuton
    if and only if they have the same completion.
    \item With the weak topology, $\Perm$
    is a \emph{compact separable metrizable semitopological semigroup},
    which is to say that for each
    fixed $P \in \Perm$, the maps
    \[
        \Perm \to \Perm, Q \mapsto PQ,
        \qquad
        \Perm \to \Perm, Q \mapsto QP
    \]
    are continuous.
    However, the composition map
    $\Perm \times \Perm \to \Perm$
    is not (jointly) continuous with respect to this topology.
    \item With the strong topology,
    $\Perm$ is a \emph{non-compact
    separable metrizable topological semigroup},
    which is to say that the
    multiplication map
    \[
        \Perm \times \Perm \to \Perm,
        (P,Q) \mapsto PQ
    \]
    is (jointly) continuous.
    \item $\End^\dagger \subset \Perm$
    is strongly closed but weakly dense.
    $\End$ and $\Aut$ are also weakly dense.\footnote{Strong closure is from \cref{characterization of strong convergence for isometries}.
    One way to see weak density of $\Aut$
    is to use weak density of permutations (which follows from e.g. Lemma 4.2 of Hoppen et al. \cite{Hoppen2013}).
    Then, instead of encoding a sequence of permutations according to \eqref{eq:permutation to permuton},
    one can encode a permutation $\sigma$ by the automorphism of $I$
    which break up $I$ into $N$ equal length intervals and reorders them according to $\sigma$;
    it is not hard to show that this construction is weakly asymptotic to $P_\sigma$ as $N \to \infty$.
    }
    \item The restriction of the weak topology to $\End^\dagger$
    is equal to the restriction
    of the strong topology to $\End^\dagger$.\footnote{The proof of this
is almost identical to the proof of \cref{characterization of strong convergence}.}
    So the restricted composition maps
    \[
        \End^\dagger \times \End^\dagger \to \End^\dagger,
        \qquad
        \Aut \times \Aut \to \Aut
    \]
    are (jointly) weakly continuous.
    \item The adjoint map $\Perm \to \Perm, P \mapsto P^\dagger$
    is weakly continuous but \emph{not}
    strongly continuous.
    Thus the restricted composition map
    \[
        \End \times \End \to \End
    \]
    is also (jointly) weakly continuous.
    \item The composition structure we defined is the unique operation on $\Perm$ which agrees with composition of permutations on $\bigcup_{n=1}^\infty \{P_\sigma : \sigma\in S_n\}^2$, which is jointly (weakly) continuous 
    when restricted to
    \[
        \left[\left(\bigcup_{n=1}^\infty \{P_\sigma : \sigma \in S_n\}\right) \cup \End\right] \times \Perm \to \Perm,
    \]
    and for which composition is weakly continuous with one component fixed.
    \item $\Perm$ is convex,
    and $\End \cup \End^\dagger$
    lies inside the set of extreme points.
    Hence the set of extreme points
    of $\Perm$ is (weakly) dense in $\Perm$.\footnote{$\Perm$
    has many other extreme points outside
    of $\End \cup \End^\dagger$,
    although this is much harder to see.}
    \end{enumerate}
\end{prop}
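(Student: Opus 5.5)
The plan is to go through the sixteen items in order, grouping them by which earlier result they reduce to. Most of them are already in the paper. Item (1) is \cref{basic Perm facts}. Item (6) is \cref{characterization of isometries}. Item (10) is \cref{continuity fixing one component} together with the counterexample $UU$ from the introduction. Item (8) and the first half of (9) are \cref{idempotent permutons kernel condition} and \cref{idempotent permutons are condition expectations}.

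For items (2)--(5) and (7) I will work on the operator side. Left invertibility is \cref{characterization of isometries}, giving (3). Item (2) follows by taking adjoints: $PQ=\Delta$ holds iff $Q^\dagger P^\dagger=\Delta$, since $\widehat{\cdot}$ preserves adjoints and is injective. If $P$ has a left inverse $Q$ and a right inverse $R$, then $Q=QPR=R$. Hence $P$ is two-sided invertible iff $P\in\End\cap\End^\dagger$, with inverse $P^\dagger$. It remains to identify $\End\cap\End^\dagger$ with $\Aut$, which gives (4) and (5). If $P=\gr(F)$ and $P^\dagger=\gr(G)$, then $\gr(F\circ G)=\gr(G\circ F)=\Delta$ by \cref{familiar special cases}. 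So $F\circ G=\id=G\circ F$ a.e., i.e.\ $F$ is invertible mod null sets, and conversely. Item (7) follows since an isometry $\widehat P$ with $\widehat P\widehat P^\dagger=\mathrm{Id}$ is unitary.

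For the converse in item (9), given a sub-$\sigma$-algebra $\mathcal F$, I will set $P(A\times B)=\langle \E[\ind_A|\mathcal F],\ind_B\rangle$. Extending this to a permuton uses positivity and the preservation of constants by conditional expectation; for countable additivity I will use regular conditional probabilities, taking the kernel $p_x=\Pbb(\cdot|\mathcal F)(x)$. This permuton is idempotent because conditional expectation is a projection. Two $\sigma$-algebras give the same operator iff they have the same completion, because the fixed space of $\E[\cdot|\mathcal F]$ consists of the $\mathcal F$-measurable functions up to null sets.

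Items (11)--(14) are functional analysis. For joint strong continuity in (11) I will use
\[
\|P_nQ_nf-PQf\|\le\|Q_nf-Qf\|+\|(P_n-P)Qf\|.
\]
Non-compactness follows because the permutations $P_{\mathrm{id}_n}$ have no strongly convergent subsequence to a non-isometry, while they are weakly dense. Items (12) and (13) are \cref{characterization of strong convergence for isometries} together with weak density of $\Aut$ as in the footnote. For (14), weak continuity of $\dagger$ is clear. Strong discontinuity is witnessed by $\gr(F_n)$ with $F_n$ the rescalings $x\mapsto nx \bmod 1$: their adjoints are isometries converging strongly to $U$ (averaging), while $\gr(F_n)\to U$ only weakly. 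Then $\End\times\End$ continuity follows from (13) by taking adjoints.

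For item (15), uniqueness comes from density: agreement on permutations plus the stated continuity on $[\text{perms}\cup\End]\times\Perm$ determines $PQ$ for $P\in\End$ and arbitrary $Q$. Weak density of permutations and separate continuity in the first variable then extend this to all $P$. Existence of such an operation is \cref{main} together with separate continuity. For item (16), Dirac kernels are extreme by the same extreme-point argument as in the proof of \cref{characterization of isometries}. Since $\End$ is weakly dense, so are the extreme points.

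The main obstacle is the converse part of item (9), constructing a genuine permuton from an arbitrary sub-$\sigma$-algebra, which requires regular conditional probabilities on the standard Borel space $I$. A secondary obstacle is producing the explicit strong-discontinuity example in item (14).
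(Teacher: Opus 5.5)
Most of your reductions to earlier results are the ones the paper intends (its own justification is only that the items are straightforward or already shown), and they are fine. The genuine gap is in item (15). Your existence claim, ``\cref{main} together with separate continuity'', does not give joint continuity on $[\{P_\sigma\}\cup\End]\times\Perm$. \cref{main} only concerns two sequences of permutations of diverging size, and says nothing about $P_n\in\End$ or about arbitrary $Q_n\in\Perm$.

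Worse, that joint continuity is impossible for \emph{any} extension of permutation composition. Given $\sigma\in S_N$, choose $\tau_n\in S_{k_n}$ with $k_n\to\infty$ and $P_{\tau_n}\to P_\sigma$. Then $P_{\tau_n^{-1}}=P_{\tau_n}^\dagger\to P_{\sigma^{-1}}$, yet $P_{\tau_n}P_{\tau_n^{-1}}=P_{\mathrm{id}_{k_n}}\to\Delta\neq P_{\mathrm{id}_N}=P_\sigma P_{\sigma^{-1}}$. For $N=1$ this is exactly the introduction's example, since $P_{\mathrm{id}_1}=U$. So the item can only be proved as joint continuity at points $(P,Q)$ with $P\in\End$, and there you need a different argument:
\begin{itemize}
\item Write $(P_nQ_n)^\dagger=Q_n^\dagger P_n^\dagger$ and apply \cref{mixed weak and strong}.
\item This needs $P_n^\dagger\to P^\dagger$ strongly. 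The (3)$\Rightarrow$(2) computation in \cref{characterization of strong convergence} gives it, because $P_nP_n^\dagger$ equals $\Delta$ when $P_n\in\End$ and equals $P_{\mathrm{id}_{k_n}}\to\Delta$ when $P_n$ is a permutation.
\item For permutations the sizes must diverge: $P\in\End$ is singular with respect to $\mathcal{L}^2$, so it is not one of the absolutely continuous $P_\sigma$.
\end{itemize}
In your uniqueness step, extend from $\End\times\Perm$ to $\Perm\times\Perm$ using weak density of $\End$ (item (12)), not of permutations. Nothing you have shown determines the value at $(P_\sigma,Q)$ for a fixed permutation $\sigma$ and arbitrary $Q$.

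Two smaller steps are also wrong as written. In (14) you have the roles reversed. $\widehat{\gr(F_n)}$ is the averaging operator $f\mapsto\frac{1}{n}\sum_{k=0}^{n-1}f((\cdot+k)/n)$, which converges strongly to $\widehat U$ (Riemann sums for continuous $f$, plus $\|\widehat{\gr(F_n)}\|\le 1$). By contrast, $\widehat{\gr(F_n)^\dagger}$ is the Koopman isometry $f\mapsto f\circ F_n$. Isometries cannot converge strongly to $\widehat U$, because $\gr(\End)^\dagger$ is strongly closed (\cref{characterization of strong convergence for isometries}). So the correct witness is $\gr(F_n)\to U$ strongly while $\gr(F_n)^\dagger\to U$ only weakly.

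In (11), the sequence $P_{\mathrm{id}_n}$ converges strongly to $\Delta$ and is not weakly dense, so it proves nothing. Instead take permutations of diverging size converging weakly to $U$. Any strongly convergent subsequence would have limit $U$, which is not an isometry, contradicting \cref{characterization of strong convergence}. Metrizability then gives non-compactness.
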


A thing to note about the above
is that even though the
definition of a permuton
does not explicitly require
any compatibility with the adjoint operator
or the $L^2$-metric, we see compatibilities
with the $L^2$-metric at least at algebraically special elements.
For instance, all idempotents are \emph{orthogonal} projections,
and all left-invertible elements are
\emph{isometries},
things which are certainly not
true for the monoid of all bounded linear operators on Hilbert space. It seems likely that the algebra of $\Perm$ has many other unexpected special properties.

\section{Future work} \label{sec:future}

Our counterexample to continuity shows that
the statement 
\begin{equation} \label{eq:continuity conjecture}
    \mbox{If } \sigma_N \to P, \tau_N \to Q, \mbox{ then } \sigma_N \tau_N \to PQ
\end{equation}
is false in general. 
However, our counterexample exploited a close relationship between $\sigma_N$ and $\tau_N$
($\sigma_N = \tau_N^{-1}$).
Note that if we assume weak convergence $P_n \to P$
and $Q_n \to Q$, we can identify a quantity which is the obstruction to weak convergence of the compositions
as follows.
For any $f, g \in L^2(I, \mathcal{L})$, we have
\begin{align*}
    \pair{\widehat{P}_n \widehat{Q}_n f}{g} - \pair{\widehat{P} \widehat{Q} f}{g}
    =
    \pair{\widehat{P}_n (\widehat{Q}_n - \widehat{Q}) f}{g}
    + \pair{(\widehat{P}_n - \widehat{P})\widehat{Q} f}{g}
    = \pair{(\widehat{Q}_n - \widehat{Q})f}{\widehat{P}_n^\dagger g} + o(1).
\end{align*}
If the left hand side is equal to $0$ for all $f,g$,
then we have weak convergence. Thus, the only way for weak convergence to fail is if the first term on the rightmost side does not vanish; even though the error term $(\widehat{Q}_n - \widehat{Q})f$
is asymptotically orthogonal to any fixed function, some
$\widehat{P}^\dagger_n g$ is ``following''  this error term around. Note that using weak convergence again, one can similarly show that $P_n Q_n \to PQ$
weakly if and only if $\pair{(Q_n -Q)f}{(P_n - P)^\dagger g}$
goes to $0$ for all fixed $f,g$.

Therefore, it appears that $P_n$ and $Q_n$ must ``conspire'' to prevent weak convergence of $P_n Q_n$,
and convergence must be much more common in applications.
For instance, one might conjecture
that, if $\sigma_N$ and $\tau_N$ are \emph{independent}, then \eqref{eq:continuity conjecture} holds. This cannot quite be true,
since our counterexample argument yields \emph{deterministic} sequences $\sigma_N \to U, \tau_N \to U$ with $\sigma_N \tau_N \to \Delta \ne U$,
and such sequences would technically be independent.
However, we have not constructed any independent non-deterministic counterexamples.
Hence it seems plausible to conjecture that \eqref{eq:continuity conjecture} holds if one assumes a variance lower bound or some similar nondegeneracy condition in addition to independence.
A natural special case to consider in which \eqref{eq:continuity conjecture} likely holds is the case that $\sigma_N$ and $\tau_N$ are the permutations induced by sampling $N$ independent points each from $P$ and $Q$ respectively.

To our knowledge, the only other paper
which has thus far shown a statement of the form \eqref{eq:continuity conjecture}, even in a special case, is \cite{Jin2019}, and this is done using quite different techniques; therefore, even establishing instances of \eqref{eq:continuity conjecture} in special cases could be a worthy endeavor.

Unfortunately, since a sequence of permutations converges strongly if and only if it converges to an isometry (by \cref{characterization of strong convergence}), \cref{main} may be roughly the limit of the application of strong operator convergence to permutations. (Still, many important natural permutons satisfy the hypotheses of \cref{main}, so one may get a lot of mileage out of this.)
On the other hand, we note that our main theorems do not actually require any sort of independence assumption. We simply assumed independence in our application because (1) compositions of independent permutation sequences pose a natural question, and (2) our applications to the \emph{expectation} of the limit \emph{do} require an independence assumption. Thus \cref{main} may be useful for instances of random permutations which lack independence.

Another direction is to try to describe the composition on $\Perm$ in terms of \emph{subpattern densities}.
For $n \in \N, P \in \Perm$,
let $\mathrm{perm}(P,n) \in S_n$ be the random permutation induced by the random set $\{(X_1,Y_1), ..., (X_n,Y_n)\} \subset I^2$,
where the $(X_i, Y_i) \in I^2$
are independently sampled with distribution $P$.\footnote{For a subset $S \subset I^2$, of size $n$, the \emph{permutation induced by $S$} is the unique $\sigma \in S_n$ which matches the relative order of $S$. That is,
there exists a (unique) map $\phi:S \to \{1,...,n\}$
such that for all $(x,y), (x',y') \in S$ we have
$x < x' \Leftrightarrow \phi(x,y) < \phi(x',y')$
and $y < y' \Leftrightarrow \sigma(\phi(x,y)) < \sigma(\phi(x',y'))$.}
For a random permuton $P$
and $\tau \in S_n$,
the \emph{expected subpattern density} of $\tau$ in $P$ is defined to
be $\Pbb(\mathrm{perm}(P,n) = \tau)$.
Theorem 2.5 and Proposition 2.9 of \cite{Bassino2020}
establish that
the map
\[
    \Prob(\Perm) \to 
    [0,1]^{\bigcup_{n=1}^\infty S_n},
    P \mapsto
    \big(\Pbb( \mathrm{perm}(P,n) = \tau) \big)_{n \in \N, \tau \in S_n}
\]
is an affine\footnote{Note here that the affine structure is that of $\Prob(\Perm)$, \emph{not}
that of $\Perm$. Even though $\Perm$ naturally embeds continuously into $\Prob(\Perm)$ via the map $P \mapsto \delta_P$,
beware that this map is \emph{not} affine; typically $s\delta_P + t\delta_Q \ne \delta_{sP + tQ}$.} 
homeomorphism onto its image,\footnote{Here $[0,1]^{\bigcup_{n=1}^\infty S_n}$ is endowed with the product topology.} and that the image of this map consists of ``consistent families of random permutations'' (see Definition 2.8 of \cite{Bassino2020}). That is, random permutons can equivalently
be viewed as consistent families of random permutations.
Thus, multiplication of independent random permutons induces a semigroup structure on the collection of consistent families of random permutations inside of $[0,1]^{\bigcup_{n=1}^\infty S_n}$. The extreme points of this set correspond to deterministic permutons, so the restriction of the composition operation to these extreme points
gives a semigroup isomorphic to $\Perm$.
It would be interesting to see if one can relate the composition operation on $\Perm$ (and $\Prob(\Perm)$)
to the compositions of the associated random permutations. This problem is related to the question of whether \eqref{eq:continuity conjecture} holds when $\sigma_N = \mathrm{perm}(P,N), \tau_N = \mathrm{perm}(Q,N)$.

Another natural question to attack is whether for independent (biased) Brownian separable permutons
$P_1, P_2,...,$
we have that 
\[
    P_1 \cdots P_k \to U
\]
as $k \to \infty$ almost surely, or in probability.
This would be consistent with our result
that $\E[P_1 \cdots P_k] \to U$. Such a result would say that, from the point of view of subpattern densities, a large product of independent separable permutations on a large set is nearly indistinguishable from a permutation sampled uniformly from a large set. That is, the structure imposed
by avoiding certain subpatterns
fades away as one takes independent compositions.

It is instructive to contrast this situation with that of e.g. $231$-avoiding permutations.
The condition of being $231$-avoiding
enforces strong structural constraints
on the permutation, which is witnessed
by the convergence to the \emph{diagonal}
permuton $\Delta$.
Moreover, from \cref{corollary diagonal},
we know that any power or finite product of uniformly chosen $231$-avoiding permutations will also converge to $\Delta$. Thus, in the situation of $231$-avoiding permutations,
finite compositions actually \emph{preserve} the structure of the limit (at least at the level of permutons), even though the compositions themselves will not necessarily
be $231$-avoiding.
This suggests the general problem of determining which forbidden patterns produce
structures robust to (independent) composition and which do not. 

In another direction, since permutons appear across so many domains, it may be fruitful to try to further apply theory across these domains. For instance, a Lebesgue measure-preserving map $F$ on the interval\footnote{One can also take an arbitrary compact metrizable space $X$ and probability measure $\mu$. Most of the theory developed here gives identical results for ``permutons on $X$'', that is, measures on $X \times X$ with marginals equal to $\mu$. In fact, if $(X,\mu)$
is \emph{measure-theoretically} isomorphic to $(I,\mathcal{L})$ (a ``standard probability space''),
that isomorphism induces an affine homeomorphism and semigroup isomorphism between ``permutons on $X$'' and $\Perm$.
Thus, the space of permutons does not remember the topology of $X$, only the measure-theoretic properties of $(X,\mu)$.}
defines a permuton $P = \gr(F)$, and
$F$ is conjugate to another map $G$ on $I$
if and only if $\gr(G) = QPQ^\dagger$
for some invertible $Q \in \Perm$. In other words,
$\Perm$ gives a natural setting for studying probability measure-preserving dynamical systems up to measurable conjugacy, and indeed to some extent this approach has already been used to great effect (see e.g. \cite{Glasner2003}).
Since in the combinatorial context, permutons have such a close connection with subpattern densities, it may be interesting to consider whether there are any \emph{measurable conjugacy} invariants for measure-preserving systems which can be phrased in terms of subpattern densities of the associated permutons.

\bibliographystyle{plain}
\bibliography{refs}

\appendix
\section{Some facts about Markov kernels}
Here, to keep the paper mostly self-contained, we prove some basic results about stationary Markov kernels which are already known,
but may be slightly difficult to find a proof of in the literature.

Firstly, we have that composition of \emph{stationary} Markov kernels is well-defined when we quotient by the equivalence relation of almost-sure equality.
That is:

\begin{prop}
   Let $\mu, \mu', \nu, \nu': I \to \mathrm{Prob}(I)$ all be Markov kernels for which the uniform measure $\mathcal{L}$ on $I$ is stationary.
   Further suppose that
   \[
      \mathcal{L} \{ x : \mu_x = \mu'_x \} = 1 = \mathcal{L} \{ x : \nu_x = \nu'_x \}.
   \]
   Then 
   \[
      \mathcal{L} \{ x : (\nu \circ \mu)_x = (\nu' \circ \mu')_x \} = 1.
   \]
\end{prop}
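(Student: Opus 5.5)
We must show that $(\nu\circ\mu)_x = (\nu'\circ\mu')_x$ for $\mathcal{L}$-a.e. $x$, where $(\nu\circ\mu)_x = \int \nu_y \, d\mu_x(y)$. The plan is a triangle argument: we compare $\nu\circ\mu$ with $\nu\circ\mu'$, and then $\nu\circ\mu'$ with $\nu'\circ\mu'$. The first comparison is trivial. Let $G = \{x : \mu_x = \mu'_x\}$, which has full measure by hypothesis. For $x \in G$ the integrating measures coincide, so $(\nu\circ\mu)_x = (\nu\circ\mu')_x$ literally, for every $x \in G$. Note that $G$ is Borel: as in the proof of \cref{characterization of isometries}, it is the countable intersection over $A \in \mathcal{D}$ of the Borel sets $\{x : \mu_x(A) = \mu'_x(A)\}$, since a probability measure on $I$ is determined by its values on the $\pi$-system $\mathcal{D}$. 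The same description shows that $H = \{y : \nu_y = \nu'_y\}$ is Borel with $\mathcal{L}(H) = 1$.

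The second comparison is where stationarity enters, and I expect it to be the only real point. We need the inner integration, against $\mu'_x$, to ignore the null set $H^c$ for a.e. $x$. Using stationarity of $\mu'$ (i.e.\ $\mu' \circ \mathcal{L} = \mathcal{L}$), we compute
\[
\int_I \mu'_x(H^c)\, dx = (\mu'\circ\mathcal{L})(H^c) = \mathcal{L}(H^c) = 0 .
\]
Since $x \mapsto \mu'_x(H^c)$ is measurable and nonnegative, it follows that $\mu'_x(H^c) = 0$ for $\mathcal{L}$-a.e. $x$. For every such $x$ and every Borel $B$,
\[
(\nu\circ\mu')_x(B) = \int_H \nu_y(B)\, d\mu'_x(y) = \int_H \nu'_y(B)\, d\mu'_x(y) = (\nu'\circ\mu')_x(B),
\]
so $(\nu\circ\mu')_x = (\nu'\circ\mu')_x$. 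This is exactly where the counterexample in the footnote after \cref{composition is well defined up to ae equivalence} fails: without stationarity, $\mu'_x$ may concentrate on the null set where $\nu$ and $\nu'$ differ.

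Intersecting the two full-measure sets from the steps above gives a full-measure set on which $(\nu\circ\mu)_x = (\nu'\circ\mu')_x$. This set is measurable, again by the $\mathcal{D}$-intersection description, since the composed kernels are Markov kernels. Only the stationarity of $\mu'$ was actually used; the stationarity of $\nu, \nu'$ and of $\mu$ is not needed.
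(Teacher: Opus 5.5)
Your proof is correct and follows essentially the same route as the paper. Both arguments use the triangle $\nu\circ\mu \to \nu\circ\mu' \to \nu'\circ\mu'$, and both use stationarity of $\mu'$ to show that the inner integration against $\mu'_x$ ignores the $\mathcal{L}$-null set where $\nu$ and $\nu'$ differ. The only cosmetic difference is in the second step. The paper fixes $B$, bounds $\int |(\nu\circ\mu')_x(B)-(\nu'\circ\mu')_x(B)|\,dx \le \int |\nu_y(B)-\nu'_y(B)|\,dy = 0$, and then intersects over rational intervals. You instead use the countable generator up front to make $\{y:\nu_y=\nu'_y\}$ Borel, and then get $\mu'_x$-nullity of its complement, which gives equality for all $B$ at once.
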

\begin{proof}
Firstly, whenever $\mu_x = \mu'_x$, we have 
\[
(\nu \circ \mu)_x = \int_{y \in I} \nu_y d\mu_x(y) = \int_{y \in I} \nu_y d\mu'_x(y) = (\nu \circ \mu')_x.
\]
So $(\nu \circ \mu)_x = (\nu \circ \mu')_x$ for $\mathcal{L}$-almost every $x$.
Now, given an arbitrary Borel $B \subset I$, we want to show that $(\nu \circ \mu')_x(B) = (\nu' \circ \mu')_x(B)$
for $\mathcal{L}$-almost every $x$. For this, first note that
\begin{align*}
   0 &\le
   |(\nu \circ \mu')_x(B) - (\nu' \circ \mu')_x(B)| \\
   &= \left|\int_{y \in I} \nu_y(B) d\mu'_x(y) - \int_{y \in I} \nu'_y(B) d\mu'_x(y)\right| \\
   &= \left|\int_{y \in I} (\nu_y(B) - \nu'_y(B)) d\mu'_x(y)\right| \\
   &\le \int_{y \in I} |\nu_y(B) - \nu'_y(B)| d\mu'_x(y).
\end{align*}
So, integrating the nonnegative function $x \mapsto |(\nu \circ \mu')_x(B) - (\nu' \circ \mu')_x(B)|$ with respect to Lebesgue measure gives
\begin{align*}
   \int_0^1 |(\nu \circ \mu')_x(B) - (\nu' \circ \mu')_x(B)| dx
   &\le \int_{x=0}^{x=1} \int_{y \in I} |\nu_y(B) - \nu'_y(B)| d\mu'_x(y) dx \\
   &= \int_{y=0}^{y=1} |\nu_y(B) - \nu'_y(B)| dy = 0,
\end{align*}
where in the last line we have used that by assumption $ \mu'_* \mathcal{L} = \mathcal{L}$, and
that $\nu_y = \nu'_y$ $\mathcal{L}$-almost surely.
Thus, we have $|(\nu \circ \mu')_x(B) - (\nu' \circ \mu')_x(B)| = 0$ for $\mathcal{L}$-a.e. $x$.
Letting $B$ vary over a countable generator for the $\sigma$-algebra of $I$ (e.g. the set of all closed intervals with rational endpoints),
we have that 
\[
   (\nu \circ \mu)_x = (\nu \circ \mu')_x = (\nu' \circ \mu')_x
\]
for $\mathcal{L}$-a.e. $x$, as desired.
\end{proof}

\begin{prop}
   There is a bijection
   \[
      \{ \mathrm{permutons} \} \leftrightarrow \left\{ \mbox{Markov kernels on } I \mbox{ for which uniform measure is stationary}\right\}/\sim,
   \]
   where the equivalence relation $\sim$ identifies such Markov kernels which are equal on a set of uniform measure 1.
\end{prop}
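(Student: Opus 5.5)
The plan is to check directly that the two constructions, $P \mapsto p$ (disintegration) and $p \mapsto \int_I (\delta_x \otimes p_x)\,dx$, are well defined on the stated sets and are mutually inverse.

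\textbf{From permutons to kernels.} Given $P \in \Perm$, \cref{disintegration} with $X = Y = I$ and $\nu = (\pi_1)_*P = \mathcal{L}$ produces a Markov kernel $p$. This kernel is unique up to $\mathcal{L}$-a.e. equality, so its class in the quotient by $\sim$ is well defined. Stationarity follows by testing $P = \int (\delta_x \otimes p_x)\,dx$ against sets $I \times B$:
\[
   (p \circ \mathcal{L})(B) = \int_I p_x(B)\,dx = P(I \times B) = (\pi_2)_*P(B) = \mathcal{L}(B).
\]
So the map lands in stationary kernels modulo $\sim$.

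\textbf{From kernels to permutons.} Given a stationary kernel $p$, set $P := \int_I (\delta_x \otimes p_x)\,dx$. This is a probability measure on $I \times I$; measurability of $x \mapsto (\delta_x \otimes p_x)(E)$ for Borel $E \subseteq I \times I$ follows from a monotone class argument starting from rectangles. Its marginals are correct: $P(A \times I) = \int \ind_A(x)\,dx = \mathcal{L}(A)$, and $P(I \times B) = (p \circ \mathcal{L})(B) = \mathcal{L}(B)$ by stationarity. If $p \sim p'$, the integrands agree for $\mathcal{L}$-a.e. $x$, so the two integrals coincide and the map descends to the quotient.

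\textbf{Inverse relations.} Starting from $P$, taking its disintegration $p$ and reintegrating returns $P$ by the defining identity of a disintegration. Starting from a stationary $p$, the permuton $P = \int (\delta_x \otimes p_x)\,dx$ has first marginal $\mathcal{L}$, and $p$ itself satisfies the disintegration identity for $P$. The uniqueness clause of \cref{disintegration} then says any disintegration of $P$ is $\sim p$. Hence the two maps are mutually inverse bijections.

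The only step requiring any care is the measurability of $x \mapsto (\delta_x \otimes p_x)(E)$ for general Borel $E$. For $E = A \times B$ this function is $\ind_A(x)\,p_x(B)$, which is measurable. The class of $E$ for which measurability holds is a $\lambda$-system, so the $\pi$–$\lambda$ theorem extends it to all Borel sets. Everything else is immediate from \cref{disintegration}.
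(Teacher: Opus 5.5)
Your proposal is correct and follows essentially the same route as the paper: you disintegrate along $\pi_1$ and get stationarity from the second marginal, reintegrate a stationary kernel and check both marginals, and use the a.e.-uniqueness clause of \cref{disintegration} to see that the two maps are mutually inverse. Your $\pi$--$\lambda$ argument for the measurability of $x \mapsto (\delta_x \otimes p_x)(E)$ fills in a detail the paper leaves implicit when it asserts that $\int_0^1 (\delta_x \otimes \mu_x)\,dx$ is a probability measure.
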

\begin{proof}
   Given a permuton $P \in \mathrm{Prob}(I \times I)$ we define a Markov kernel $p$ as follows. Consider the projection onto the first coordinate
   $\pi_1: I \times I \to I$, and recall that by definition $(\pi_1)_* P = \mathcal{L}$.
   Then disintegration of measure (\cref{disintegration}) gives us a Markov kernel $I \to \mathrm{Prob}(I), x \mapsto p_x$ 
   such that $P = \int_{x \in I} (\delta_x \otimes p_x) dx$, and $p$ is unique up to $\mathcal{L}$-a.e. equality.
   We claim that $\mathcal{L}$ is a stationary measure for $p$. To see this, note that
   \begin{align*}
      p \circ \mathcal{L} = \int_0^1 p_x dx 
      = \int_0^1 \pi_2^* (\delta_x \otimes p_x) dx 
      = \pi_2^* \int_0^1 (\delta_x \otimes p_x) dx 
      = \pi_2^* P = \mathcal{L},
   \end{align*}
   where in the third equality we used linearity of pushforward, and in the last we used that $P$ is a permuton.
   
   Thus we have constructed a well-defined map from permutons to stationary Markov kernels up to $\mathcal{L}$-a.e. equality.
   We construct the inverse map as follows.
   Given a Markov kernel $\mu: I \to \mathrm{Prob}(I)$ for which uniform measure is stationary,
   define a probability measure $P_{\mu} \in \mathrm{Prob}(I)$ by
   \[
      P_{\mu} = \int_0^1 (\delta_x \otimes \mu_x) dx.
   \]
   This is a probability measure on $I \times I$, as an average of products of probability measures on $I$.
   Using linearity of pushforward, we can easily check that $P_{\mu}$ is a permuton:
   \begin{align*}
      (\pi_1)_* P_{\mu} = (\pi_1)_* \int_0^1 (\delta_x \otimes \mu_x) dx 
      = \int_0^1 (\pi_1)_* (\delta_x \otimes \mu_x) dx 
      = \int_0^1 \delta_x dx = \mathcal{L},
   \end{align*}
   and 
    \begin{align*}
      (\pi_2)_* P_{\mu} = (\pi_2)_* \int_0^1 (\delta_x \otimes \mu_x) dx 
      = \int_0^1 (\pi_2)_* (\delta_x \otimes \mu_x) dx 
      = \int_0^1 \mu_x dx = \mathcal{L},
   \end{align*}
   where in the last equality we have used that $\mathcal{L}$ is a stationary measure for $\mu$.
   
   Note also that it follows immediately from our formula for $P_{\mu}$
   that if $\mu \sim \mu'$, then $P_{\mu} = P_{\mu'}$.
   Thus we have constructed a well-defined map in the opposite direction.
   
   Now we show that these two maps $P \mapsto p$ and $\mu \mapsto P_\mu$ are inverse to each other.
   
   First, given a Markov kernel $\mu$ (with uniform stationary measure), let us show that $p_{\mu} \sim \mu$. This follows immediately from $\mathcal{L}$-a.e. uniqueness of disintegration of measure, since
   \[
       \int_{x \in I} (\delta_x \otimes \mu_x)dx
       = P_\mu
       = \int_{x \in I} (\delta_x \otimes p_\mu)dx.
   \]

   Next, given a permuton $P$, we must show that $P_{p} = P$. But again, by construction,
   $p_x$ gives a disintegration for $P$, so we have
   \[
      P_{p} = \int_0^1 (\delta_x \otimes p) dx = P,
   \]
   as desired.
\end{proof}

\section{Proof of \cref{basic operator facts}}
Here we prove \cref{basic operator facts}, which we divide up into several propositions.
Other proofs can be found in e.g. Section 13 of \cite{Eisner2015}.

\begin{prop}
   Fix any permuton $P$.
   For any bounded measurable function $f:I \to \C$ and any $p \in [1,\infty]$, we have $\| \widehat{P}f \|_p \le \|f\|_p$,
   where $\widehat{P}f$
   is defined as in \eqref{eq:operator definition}
   and 
   $\| f \|_p := \left( \int_I |f(x)|^p dx \right)^{1/p}$ is the $L^p$ norm with respect to the measure $\mathcal{L}$.
   In particular, $\widehat{P}$ defines a bounded linear operator on each $L^p(I, \mathcal{L})$.
\end{prop}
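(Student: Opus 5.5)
The plan is to treat $p<\infty$ and $p=\infty$ separately, using pointwise Jensen's inequality along each fiber measure and then the stationarity of Lebesgue measure for the transposed kernel. First I would record that $p^\dagger$, the Markov kernel associated to $P^\dagger$, is a stationary Markov kernel. Indeed $P^\dagger\in\Perm$, so \cref{bijection} applies and gives
\[
\int_{x\in I}(p^\dagger)_x\,dx=\mathcal{L}.
\]
This is the only property of $P$ the argument needs. Measurability of $x\mapsto(\widehat{P}f)(x)$ for bounded measurable $f$ follows from the definition of a Markov kernel: it holds for indicators, then for simple functions by linearity, and then for bounded measurable $f$ by uniform approximation.

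For $p=\infty$ the bound is immediate. Each $(p^\dagger)_x$ is a probability measure, so $|(\widehat{P}f)(x)|\le\sup|f|$. To get the essential supremum instead, let $N=\{|f|>\|f\|_\infty\}$, which is $\mathcal{L}$-null. Stationarity gives $\int(p^\dagger)_x(N)\,dx=\mathcal{L}(N)=0$, so $(p^\dagger)_x(N)=0$ for a.e. $x$, and hence $|(\widehat{P}f)(x)|\le\|f\|_\infty$ for a.e. $x$.

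For $1\le p<\infty$, the function $t\mapsto|t|^p$ is convex on $\C$. Jensen's inequality (or simply H\"older's inequality against a probability measure) then gives, for each $x$,
\[
|(\widehat{P}f)(x)|^p\le\int_{y\in I}|f(y)|^p\,d(p^\dagger)_x(y).
\]
Integrating in $x$ and applying the disintegration/stationarity identity (Tonelli's theorem applies since the integrand is nonnegative) gives
\[
\|\widehat{P}f\|_p^p\le\int_{x\in I}\int_{y\in I}|f(y)|^p\,d(p^\dagger)_x(y)\,dx=\int_I|f(y)|^p\,dy=\|f\|_p^p.
\]

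Finally I would extend from bounded functions to all of $L^p$. The same null-set argument as above shows that if $f=f'$ $\mathcal{L}$-a.e., then $\widehat{P}f=\widehat{P}f'$ a.e., so $\widehat{P}$ is well defined on equivalence classes. Since bounded functions are dense in $L^p$ for $p<\infty$, $\widehat{P}$ extends uniquely to a contraction on $L^p$; for $p=\infty$ no extension is needed. The operator norm equals $1$ because $\widehat{P}1=1$. I do not expect a real obstacle here. The only point needing care is the a.e. well-definedness, and stationarity of $p^\dagger$ handles it directly.
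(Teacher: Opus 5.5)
Your proposal is correct and is essentially the paper's proof: apply Jensen's inequality on each fiber measure $(p^\dagger)_x$, then integrate in $x$ and use that $\mathcal{L}$ is stationary for $p^\dagger$. For $p=\infty$ the paper only says to take $p\to\infty$ or argue directly; your null-set argument is such a direct argument, and your remarks on measurability and a.e.\ well-definedness fill in details the paper leaves implicit.
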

\begin{proof}
   Let $p \in [1, \infty)$ and let $f:I \to \C$ be a bounded measurable function. Then by Jensen's inequality, we have
   \begin{align*}
      \| \widehat{P}f \|_p^p &= \int_{x \in I} \left| \int_{y \in I} f(y) d(p^{\dagger})_x(y) \right|^p dx \\
      &\le \int_{x \in I} \int_{y \in I} |f(y)|^p d(p^{\dagger})_x(y) dx \\ 
      &= \int_{x \in I} |f(x)|^p dx = \| f \|_p^p,
   \end{align*}
   where in the last line we used that $\mathcal{L}$ is stationary for $p^{\dagger}$.
   Taking $p$th roots of both sides gives the desired inequality.
   
   To deduce the inequality for $p = \infty$, one can take the limit as $p \to \infty$ (or argue directly, which is straightforward).
\end{proof}

\begin{prop} \label{inner product}
   Let $f, g \in L^2(I, \mathcal{L})$. Then 
   \[
      \int_{I \times I} f(x) \overline{g(y)} dP(x,y) = \langle \widehat{P}f, g \rangle,
   \]
   where $\langle \cdot, \cdot \rangle$, denotes the inner product on $L^2(I, \mathcal{L})$.
\end{prop}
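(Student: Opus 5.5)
The plan is to compute the inner product directly from the definition \eqref{eq:operator definition} and compare it with the disintegration of $P$ with respect to the \emph{second} coordinate. By definition,
\[
   \langle \widehat{P}f, g \rangle = \int_{x \in I} \left( \int_{y \in I} f(y)\, d(p^\dagger)_x(y) \right) \overline{g(x)}\, dx
   = \int_{x \in I} \int_{y \in I} f(y)\overline{g(x)}\, d(p^\dagger)_x(y)\, dx .
\]
The inner integral is legitimate by linearity: $\overline{g(x)}$ is constant in $y$. Recall from the discussion of adjoints that $dP^\dagger(x,y) = d(p^\dagger)_x(y)\,dx$. The last expression is therefore $\int f(y)\overline{g(x)}\, dP^\dagger(x,y)$. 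Undoing the coordinate swap (pushing forward by $(x,y)\mapsto(y,x)$) turns this into $\int f(x)\overline{g(y)}\, dP(x,y)$, which is the claim.

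I would carry this out first for bounded measurable $f,g$, say simple functions or indicators $f=\ind_A$, $g=\ind_B$. There the identity reads
\[
P(A\times B) = \int_B (p^\dagger)_x(A)\,dx,
\]
which is exactly the defining property of the disintegration of $P^\dagger$, read through the swap. Linearity in $f$ and conjugate-linearity in $g$ then give the identity for all simple functions. For simple functions, Fubini/Tonelli for kernel integrals is trivially valid.

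The main obstacle is extending to general $f,g \in L^2$, where both sides must be shown to make sense and to be continuous in $(f,g)$. For the right side, item (1) of \cref{basic operator facts} (the preceding proposition) gives $\|\widehat{P}f\|_2 \le \|f\|_2$, so $(f,g)\mapsto\langle \widehat{P}f,g\rangle$ is a bounded sesquilinear form. For the left side, the bound uses the uniform marginals of $P$. By Cauchy--Schwarz,
\[
\int |f(x)||g(y)|\,dP \le \Big(\int |f(x)|^2 dP\Big)^{1/2}\Big(\int |g(y)|^2 dP\Big)^{1/2} = \|f\|_2\|g\|_2 .
\]
So the left side is also an absolutely convergent, bounded sesquilinear form. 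This also shows it is well-defined on $L^2$ classes, since changing $f$ on an $\mathcal{L}$-null set changes $f(x)$ only on a $P$-null set, again because the marginals are uniform.

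Two bounded sesquilinear forms that agree on the dense subspace of simple functions agree everywhere. Applying this with $f_n\to f$ and $g_n\to g$ in $L^2$ completes the proof.
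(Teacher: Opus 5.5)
Your proof is correct and follows the paper's own route. The paper likewise expands $\langle \widehat{P}f, g\rangle$ from the definition, rewrites the iterated integral against $d(p^\dagger)_x(y)\,dx$ as an integral against $P^\dagger$ (via $dP^\dagger(x,y) = dp_y(x)\,dy$), and relabels variables to reach $\int f(x)\overline{g(y)}\,dP(x,y)$. Your extra step, checking the identity on simple functions and extending by density using $\|\widehat{P}f\|_2\le\|f\|_2$ and $\int|f(x)||g(y)|\,dP\le\|f\|_2\|g\|_2$, only makes explicit the integrability justification that the paper's direct Fubini-type computation leaves implicit.
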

\begin{proof}
   We compute
   \begin{align*}
      \langle \widehat{P}f, g \rangle &=
      \int_{x \in I} (\widehat{P}f)(x) \overline{g(x)} dx \\
      &= \int_{x \in I} \int_{y \in I} f(y) d(p^{\dagger})_x(y) \overline{g(x)} dx \\
      &= \int_{x \in I} \int_{y \in I} f(y) \overline{g(x)} d(p^{\dagger})_x(y)  dx \\
      &= \int_{y \in I} \int_{x \in I} f(y) \overline{g(x)} dp_y(x) dy \\
      &= \int_{x \in I} \int_{y \in I} f(x) \overline{g(y)} dp_x(y) dx \\
      &= \int_{I \times I} f(x) \overline{g(y)} dP(x,y).
   \end{align*}
\end{proof}

\begin{prop}
   Let $P \in \mathrm{Perm}$ and let $P^{\dagger}$ be its adjoint permuton. Then the operator $\widehat{P^\dagger}$ associated to $P^{\dagger}$
   is the adjoint of the operator $\widehat{P}$ associated to $P$.
\end{prop}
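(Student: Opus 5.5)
The plan is to verify the defining identity of the Hilbert-space adjoint directly, using the inner product formula of \cref{inner product}. By definition, $\widehat{P}^\dagger$ is the unique bounded operator on $L^2(I,\mathcal{L})$ satisfying $\langle \widehat{P}f, g\rangle = \langle f, \widehat{P}^\dagger g\rangle$ for all $f,g \in L^2(I,\mathcal{L})$. So it suffices to show that
\[
   \langle f, \widehat{P^\dagger} g \rangle = \langle \widehat{P} f, g\rangle
\]
for all $f,g \in L^2(I,\mathcal{L})$. Here $\widehat{P^\dagger}$ is already known to be bounded on $L^2$ by the first proposition of this appendix, applied to the permuton $P^\dagger$.

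The steps are as follows. First, conjugate the defining identity: $\langle f, \widehat{P^\dagger} g\rangle = \overline{\langle \widehat{P^\dagger} g, f\rangle}$. Next, apply \cref{inner product} to the permuton $P^\dagger$, which gives $\langle \widehat{P^\dagger} g, f\rangle = \int_{I\times I} g(x)\overline{f(y)}\, dP^\dagger(x,y)$. Since $P^\dagger$ is the pushforward of $P$ under the swap $(x,y)\mapsto(y,x)$, the change-of-variables formula for pushforwards rewrites this as $\int_{I\times I} g(y)\overline{f(x)}\, dP(x,y)$. Taking complex conjugates yields $\int f(x)\overline{g(y)}\, dP(x,y)$, which equals $\langle \widehat{P}f, g\rangle$ by \cref{inner product} applied to $P$.

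The only point requiring care is that the change-of-variables step is legitimate for $L^2$ functions, not just bounded ones. I would handle this by first doing the computation for bounded measurable $f,g$. In that case the integrands are bounded and the pushforward identity $\int h\, d(S_*P) = \int h\circ S\, dP$ holds trivially. Then I would extend to general $f,g\in L^2$ by density: both sides are continuous in $(f,g)$, because both operators have norm at most $1$. Alternatively, integrability is immediate from $|f(x)g(y)| \le \tfrac12(|f(x)|^2+|g(y)|^2)$ together with the uniform marginals of $P$. Neither route presents any real obstacle.

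Finally, uniqueness of the Hilbert adjoint gives $\widehat{P^\dagger} = \widehat{P}^\dagger$ on $L^2(I,\mathcal{L})$. For the statement on other $L^p$ spaces, adjoint is understood via the same pairing $\langle\cdot,\cdot\rangle$ between $L^p$ and $L^{p'}$, and the identical computation applies, again by density of bounded functions when $p<\infty$.
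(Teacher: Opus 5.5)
Your proposal is correct and follows the same route as the paper, which runs the identical chain $\langle \widehat{P}f,g\rangle=\int f(x)\overline{g(y)}\,dP(x,y)=\int f(y)\overline{g(x)}\,dP^\dagger(x,y)=\overline{\langle \widehat{P^\dagger}g,f\rangle}=\langle f,\widehat{P^\dagger}g\rangle$. It uses the inner product formula and the swap pushforward exactly as you do, only written in the opposite direction. Your integrability remark, via $|f(x)g(y)|\le\tfrac12\bigl(|f(x)|^2+|g(y)|^2\bigr)$ and the uniform marginals, supplies a justification that the paper leaves implicit.
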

\begin{proof}
   Let $f,g \in L^2(I, \mathcal{L})$. Then we have
   \begin{align*}
      \langle \widehat{P}f, g \rangle &=
      \int_{I \times I} f(x) \overline{g(y)} dP(x,y) \\
      &= \int_{I \times I} f(y) \overline{g(x)}   dP^\dagger(x,y) \\
      &= \overline{\int_{I \times I} g(x) \overline{f(y)}  dP^\dagger(x,y)} \\
      &= \overline{\langle \widehat{P^{\dagger}} g, f \rangle} \\
      &= \langle f, \widehat{P^\dagger} g \rangle.
   \end{align*}
\end{proof}

\begin{prop}
   For any $p \in [1, \infty]$, the map $\mathrm{Perm} \to \mathcal{B}(L^p(I, \mathcal{L}))$ is injective.
\end{prop}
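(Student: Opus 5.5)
The plan is to show that if $\widehat{P} = \widehat{Q}$ as operators on $L^p(I,\mathcal{L})$, then $P = Q$ as measures on $I \times I$. Fix $p \in [1,\infty]$ and suppose $\widehat{P}f = \widehat{Q}f$ $\mathcal{L}$-a.e. for every $f \in L^p(I,\mathcal{L})$. Since $I$ has finite measure, $L^\infty(I,\mathcal{L}) \subseteq L^p(I,\mathcal{L})$ for every $p$. In particular, the equality holds for all indicator functions $\ind_A$ of Borel sets $A \subseteq I$.

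First I recover the measure of rectangles. By \cref{inner product}, applied with $f = \ind_A$ and $g = \ind_B$ (both bounded, hence in $L^2$), we get
\[
   P(A \times B) = \int_{I\times I} \ind_A(x)\ind_B(y)\, dP(x,y) = \langle \widehat{P}\ind_A, \ind_B\rangle .
\]
Because $\widehat{P}\ind_A = \widehat{Q}\ind_A$ almost everywhere, the right-hand side is unchanged when $P$ is replaced by $Q$, so $P(A\times B) = Q(A\times B)$ for all Borel $A,B$. This step does not depend on $p$. The operator equality is used only on bounded functions, and the pairing is a genuine Lebesgue integral $\int (\widehat{P}\ind_A)\ind_B\,dx$, which makes sense for any $p$.

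Next I pass from rectangles to all Borel sets by a $\pi$–$\lambda$ argument. The measurable rectangles $\{A\times B\}$ form a $\pi$-system that generates the Borel $\sigma$-algebra of $I\times I$; this uses the fact that $I$ is second countable, so the product of the Borel $\sigma$-algebras equals the Borel $\sigma$-algebra of $I \times I$. Two probability measures that agree on a generating $\pi$-system agree everywhere, so $P = Q$.

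There is no serious obstacle. The only point needing care is the bookkeeping for general $p$, in particular $p=1$, where $\widehat{P}$ is regarded as an operator on $L^1$ and the $L^2$ inner product is not a priori available. I handle this by restricting to indicator functions, which lie in every $L^p$, and by noting that the identity in \cref{inner product} is a pointwise-integral identity valid for bounded $f,g$.
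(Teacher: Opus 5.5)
Your proof is correct and matches the paper's argument: both write $P(A\times B)=\langle \widehat{P}\ind_A,\ind_B\rangle$ for Borel $A,B$ (indicators lie in every $L^p$), deduce $P(A\times B)=Q(A\times B)$, and conclude because rectangles form a $\pi$-system generating the Borel $\sigma$-algebra of $I\times I$. Your remark that the pairing is just a Lebesgue integral of bounded functions, so no $L^2$ structure is needed when $p=1$, makes explicit a point the paper passes over.
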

\begin{proof}
   Fix $p \in [1, \infty]$.
   Let $P, Q \in \mathrm{Perm}$ and suppose that the induced operators $\widehat{P}$ and $\widehat{Q}$ on $L^p(I, \mathcal{L})$ are the same.
   We want to show that $P = Q$ as measures on $I \times I$.
   
   It suffices to prove that for any $A, B \subset I$ Borel, $P(A \times B) = Q(A \times B)$
   (since these form a $\pi$-system which generates the Borel $\sigma$-algebra of $I \times I$).
   But since $\ind_A, \ind_B \in L^p(I, \mathcal{L}) \cap L^2(I, \mathcal{L})$,
   $P \ind_A = Q \ind_A$ and we have
   \begin{align*}
      P( A \times B) &=
      \int_{I \times I} \ind_A(x) \ind_B(y) dP(x,y) \\
      &= \langle P \ind_A, \ind_B \rangle \\
      &= \langle Q \ind_A, \ind_B \rangle \\
      &= \int_{I \times I} \ind_A(x) \ind_B(y) dQ(x,y) \\
      &= Q(A \times B),
   \end{align*}
   as desired.
\end{proof}

To show that $P \mapsto \widehat{P}$
is a monoid homomorphism, we actually first show that this map takes the action of the Markov kernel $p$ on (absolutely continuous) \emph{measures}
to the action of $\widehat{P}$ on their Radon-Nikodym derivatives (densities) with respect to $\mathcal{L}$.

\begin{prop} \label{takes action to action}
   Suppose $f \in L^1(I, \mathcal{L}), f \ge 0, \int_{x \in I} f(x) dx = 1$. Denote by $f(x)dx$ the probability measure $A \mapsto \int_A f(x) dx$.
   Then $(\widehat{P}f)(x)dx = p \circ [ f(x)dx ]$, where composition is in the sense of composition of Markov kernels with measures.
\end{prop}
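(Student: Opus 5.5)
To prove the claim I will show that the two probability measures $(\widehat{P}f)(x)\,dx$ and $p \circ [f(x)dx]$ agree on every Borel set $B \subseteq I$. Since both are finite measures on $I$, agreement on all Borel sets is exactly equality. Before comparing them, I will check that the left-hand side makes sense for $f \in L^1$ rather than only for bounded $f$. The operator $\widehat{P}$ extends to $L^1(I,\mathcal{L})$ by item (1) of \cref{basic operator facts}. Positivity is preserved: for $f \ge 0$ we have $(\widehat{P}f)(x) = \int f(y)\, d(p^\dagger)_x(y) \ge 0$. The total mass is also preserved, since $\int (\widehat{P}f)\,dx = \langle \widehat{P}f, 1\rangle = \int f(x)\, dP(x,y) = \int f\,dx = 1$. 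So the left-hand side is indeed a probability measure.

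The key step is a single computation starting from the adjoint relation $dP(x,y) = dp_x(y)\,dx = d(p^\dagger)_y(x)\,dy$. Fix a Borel set $B$. By the definition of the action of a Markov kernel on a measure,
\[
(p\circ[f\,dx])(B) = \int_{x\in I} p_x(B) f(x)\,dx = \int_{I\times I} f(x)\ind_B(y)\,dP(x,y).
\]
I then re-disintegrate $P$ along the second coordinate using the adjoint relation. This gives
\[
\int_{y\in I} \ind_B(y)\int_{x\in I} f(x)\,d(p^\dagger)_y(x)\,dy = \int_B (\widehat{P}f)(y)\,dy,
\]
which is exactly $((\widehat{P}f)\,dx)(B)$, as required. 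An equivalent way to organize the same argument is to apply \cref{inner product} with $g=\ind_B$: the middle expression is $\langle \widehat{P}f, \ind_B\rangle$.

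The main obstacle is justifying these manipulations when $f$ is merely in $L^1$ and not bounded, since \cref{inner product} as stated needs $f \in L^2$. I will handle this with truncation. Set $f_n = \min(f,n)$, which is bounded, nonnegative, and increases to $f$. For each $f_n$ the computation above is valid as an identity of finite nonnegative integrals. To pass to the limit, I apply monotone convergence on both sides. On the kernel side, $\int p_x(B) f_n(x)\,dx \uparrow \int p_x(B) f(x)\,dx$. On the operator side, $(\widehat{P}f_n)(y) \uparrow (\widehat{P}f)(y)$ pointwise for a.e. $y$, by monotone convergence inside each measure $(p^\dagger)_y$. This pointwise limit agrees with the $L^1$-extension of $\widehat{P}$ to $f$, because $\widehat{P}f_n \to \widehat{P}f$ in $L^1$ by the contraction bound. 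Alternatively, Tonelli's theorem applied directly to the nonnegative integrand $f(x)\ind_B(y)$ avoids the truncation altogether, and I expect to use that as the cleaner route.
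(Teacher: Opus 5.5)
Your proposal is correct and is essentially the paper's argument: integrate a test function against $p\circ[f\,dx]$, swap the disintegration $dp_x(y)\,dx = d(p^\dagger)_y(x)\,dy$, and recognize $\int_{x\in I} f(x)\,d(p^\dagger)_y(x)$ as $(\widehat{P}f)(y)$. The only differences are that you test against indicators $\ind_B$ where the paper uses continuous $g \in C(I)$, and that your truncation/Tonelli step justifies the case of unbounded $f \in L^1$, which the paper leaves implicit.
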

\begin{proof}
   Fix $P \in \mathrm{Perm}$.
   Fix $f \in L^1(I)$.
   Since by definition,
   \[
      p \circ [f(x) dx] = \int_{x \in I} p_x f(x) dx,
   \]
   for any $g \in C(I)$, we have
   \begin{align*}
      \int_{y \in I} g(y) d\left[ p \circ [f(x) dx] \right](y) &=
      \int_{x \in I} \int_{y \in I} g(y) dp_x(y) f(x) dx \\
      &=\int_{y \in I} \int_{x \in I} g(y) f(x) d(p^{\dagger})_y(x) dy \\
      &=\int_{y \in I} \int_{x \in I} g(y) \left(\int_{x \in I} f(x) d(p^{\dagger})_y(x)\right) dy \\
      &=\int_{y \in I} g(y) (\widehat{P}f)(y) dy.
   \end{align*}
   Therefore $p \circ [f(x) dx] = (\widehat{P}f)(x)dx$, as desired.
\end{proof}

\begin{prop}
   For any $p \in [1, \infty]$, the map $\mathrm{Perm} \to \mathcal{B}(L^p(I, \mathcal{L})), P \mapsto \widehat{P}$ is a semigroup homomorphism.
   That is, if $P, Q \in \mathrm{Perm}$, then 
   $\widehat{PQ} = \widehat{P} \widehat{Q}$.
\end{prop}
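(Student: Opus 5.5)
The natural route is to pass through \cref{takes action to action}, which identifies $\widehat{P}$ acting on probability densities with the Markov kernel $p$ acting on absolutely continuous measures. We then use that composition of Markov kernels is compatible with their action on measures. Concretely, we fix a density $f \ge 0$ with $\int f\,dx = 1$ and compute
\[
   (\widehat{PQ}f)(x)\,dx = (p\circ q)\circ[f(x)dx] = p \circ \bigl(q \circ [f(x)dx]\bigr) = p \circ [(\widehat{Q}f)(x)dx] = (\widehat{P}\widehat{Q}f)(x)\,dx .
\]
The first and last equalities are \cref{takes action to action}, applied to $PQ$ and then to $P$. The second is associativity of the kernel action on measures: $\int (p\circ q)_x f(x)dx = \int\int p_y\, dq_x(y) f(x)dx$, which is a Fubini-type rearrangement valid for all bounded measurable test functions. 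The third uses \cref{takes action to action} for $Q$. Note that $\widehat{Q}f$ is again a probability density, since $\widehat{Q}$ is positive and preserves integrals because $\widehat{Q}^\dagger 1 = 1$. So the formula applies at the last step. We also need that the kernel of $PQ$ is $p\circ q$ a.e., which is \cref{def:composition} together with \cref{composition is well defined up to ae equivalence}. Equality of the measures gives $\widehat{PQ}f = \widehat{P}\widehat{Q}f$ $\mathcal{L}$-a.e.

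Next we extend by linearity. Any nonnegative $f \in L^1$ is a positive multiple of a density, and any real $f\in L^1$ is $f^+ - f^-$. Any complex $f$ splits into real and imaginary parts. Hence $\widehat{PQ} = \widehat{P}\widehat{Q}$ on all of $L^1(I,\mathcal{L})$. Since $L^p(I,\mathcal{L}) \subseteq L^1(I,\mathcal{L})$ for every $p\in[1,\infty]$, and all operators involved act on $L^p$ by the same formula \eqref{eq:operator definition}, the identity holds as operators on each $L^p$.

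An alternative, if one prefers to avoid \cref{takes action to action}, is the $L^2$ pairing of \cref{basic operator facts}(2) combined with \cref{prop:composition-formula}. One checks $\langle \widehat{PQ}f, g\rangle = \int\int f(x)\overline{g(y)}\,d(q^\dagger)_z(x)\,dp_z(y)\,dz$, and that this equals $\langle \widehat{Q}f, \widehat{P}^\dagger g\rangle$ after identifying $\widehat{P}^\dagger = \widehat{P^\dagger}$. That route then needs a density argument to pass from $L^2$ to general $L^p$.

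The main obstacle is bookkeeping rather than depth. We must make sure the transposes line up correctly: the definition of $\widehat{P}$ uses $p^\dagger$, and this is exactly why we get $\widehat{PQ}=\widehat{P}\widehat{Q}$ rather than the reversed order. We must also justify the Fubini interchange in the associativity step, which is fine because everything is nonnegative or bounded.
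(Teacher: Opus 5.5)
Your proposal is correct and follows essentially the same route as the paper. Both apply \cref{takes action to action} to a probability density, use associativity of the Markov-kernel action on measures, and then extend to all of $L^p \subseteq L^1$ by splitting $f$ into positive and negative (and real and imaginary) parts. Your added remarks fill in details the paper leaves implicit: that $\widehat{Q}f$ is again a probability density, and that changing the kernel of $PQ$ on a null set does not affect its action on absolutely continuous measures.
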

\begin{proof}
   Fix any $f \in L^p(I, \mathcal{L})$ and note that $f \in L^1(I, \mathcal{L})$ as well. 
   First assume that $f \ge 0$ and $\int_{x \in I} f(x) dx = 1$.
   Then, by \cref{takes action to action} we have
   \begin{align*}
      [\widehat{PQ}f(x)]dx &=
      (p \circ q) \circ [f(x) dx] \\
      &= p \circ \left( q \circ [f(x) dx] \right) \\
      &= p \circ [ \widehat{Q}f(x) dx ] \\
      &= [\widehat{P}(\widehat{Q}f)](x) dx,
   \end{align*}
   where in the second line we have used associativity of composition of Markov kernels (and the fact that we can consider a measure on $I$
   to be a Markov kernel from the one-point space to $I$).
   Considering positive and negative parts
   and using linearity then shows that $(\widehat{PQ})f = \widehat{P}\widehat{Q}f$ for an arbitrary $f \in L^p(I, \mathcal{L})$, and so $\widehat{PQ} = \widehat{P}\widehat{Q}$ as operators on $L^p(I, \mathcal{L})$, as desired.
\end{proof}

\begin{prop}
   For any $p \in [1,\infty]$, the map $\mathrm{Perm} \to \mathcal{B}(I, \mathcal{L})$
   is affine. That is, for any permutons $P$ and $Q$ and any $s, t \in (0,1)$ with $s+t = 1$,
   $\widehat{sP + tQ} = s\widehat{P} + t\widehat{Q}$.
\end{prop}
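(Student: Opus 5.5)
The cleanest route is through the inner-product identity (\cref{inner product}), since it characterizes $\widehat{P}$ completely via pairings against $P$. Let $R := sP + tQ$. First I would note that $R$ is a permuton: both marginals of $R$ are $s\mathcal{L} + t\mathcal{L} = \mathcal{L}$, because pushforward is linear. Then for any bounded measurable $f, g$ (in particular for $f,g \in L^2(I,\mathcal{L})$, which covers the relevant dense class), linearity of the integral in the measure gives
\[
   \langle \widehat{R} f, g\rangle
   = \int_{I\times I} f(x)\overline{g(y)}\, dR(x,y)
   = s\int f(x)\overline{g(y)}\, dP + t \int f(x)\overline{g(y)}\, dQ
   = \langle (s\widehat{P} + t\widehat{Q}) f, g\rangle .
\]
Since this holds for all $g \in L^2$, we get $\widehat{R} f = s\widehat{P}f + t\widehat{Q}f$ $\mathcal{L}$-a.e. for every $f \in L^2(I,\mathcal{L})$, and in particular for every bounded measurable $f$.

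To pass from this to an identity of operators on $L^p$ for arbitrary $p\in[1,\infty]$, I would argue as follows. Bounded functions lie in every $L^p$, and for $p<\infty$ they are dense in $L^p(I,\mathcal{L})$. Both $\widehat{R}$ and $s\widehat{P} + t\widehat{Q}$ are bounded on $L^p$ by part (1) of \cref{basic operator facts}, so two bounded operators agreeing on a dense set agree everywhere. For $p = \infty$ every element of $L^\infty$ is already bounded, so the a.e. identity above applies directly.

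Alternatively, one could work at the level of Markov kernels. By uniqueness of disintegration (\cref{disintegration}), the kernel of $R^\dagger = sP^\dagger + tQ^\dagger$ is $s(p^\dagger)_x + t(q^\dagger)_x$ for a.e. $x$, and plugging this into \eqref{eq:operator definition} gives the result immediately.

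There is no real obstacle here. The only point requiring care is checking that the convex combination of kernels is a valid disintegration, or equivalently that the $L^2$ identity extends to all of $L^p$ when $p \neq 2$. The density-and-boundedness argument above handles this.
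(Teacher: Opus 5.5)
Your proof is correct, but your main route differs from the paper's. The paper leaves the computation to the reader as a ``quick consequence of'' $(sP+tQ)^\dagger = sP^\dagger + tQ^\dagger$, which is your alternative argument. By linearity of pushforward, $x \mapsto s(p^\dagger)_x + t(q^\dagger)_x$ disintegrates $(sP+tQ)^\dagger$, so by a.e.\ uniqueness it is that permuton's kernel. Substituting it into \eqref{eq:operator definition} and using linearity of the integral in the measure gives $\widehat{sP+tQ}\,f = s\widehat{P}f + t\widehat{Q}f$ pointwise a.e. This holds for every $f$ on which the defining integral makes sense (every bounded measurable $f$, indeed every $f \in L^1(I,\mathcal{L})$), so it handles all $p$ at once.

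Your primary route never touches disintegrations. It identifies the operators weakly through the pairing formula of item (2) of \cref{basic operator facts} (that is, \cref{inner product}), which is linear in the measure. It then transfers the identity from $L^2$ to $L^p$ using the uniform bound of item (1) and density of bounded functions. There is no circularity here, since both items are proved independently in the appendix. This route fits the paper's remark that $\widehat{P}$ could simply be defined by item (2). The price is the extra density step, which is only needed for $p<2$: on a probability space $L^p \subseteq L^2$ for $p \ge 2$, including $p=\infty$.

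One cosmetic point: in your first paragraph the ``in particular'' runs the wrong way. Bounded measurable functions are a subclass of $L^2$, and the pairing identity holds for all $f,g \in L^2$, which is what you actually use.
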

\begin{proof}
   The computation is left to the reader; it is a quick consequence of the fact that $(sP + tQ)^\dagger = sP^\dagger + tQ^\dagger$
   (with all these interpreted as permutons).
\end{proof}

\section{Proof of \cref{characterization of weak convergence}}
\begin{proof}[Proof of \cref{characterization of weak convergence}]
   It is immediate that $(2) \Rightarrow (3)$.
   
   To see that $(3) \Rightarrow (1)$,
   first fix $p \in [1, \infty]$ and suppose that $\widehat{P}_n \to \widehat{P}$ in the weak operator topology on $L^p(I, \mathcal{L})$.
   First we show that if $f, g \in C(I)$, then 
   \[
      \int_{I \times I} f(x) g(y) dP_n(x,y) \to \int_{I \times I} f(x) g(y) dP(x,y).
   \]
   To see this, first note that $( h \mapsto \int_{y \in I} h(y) g(y) dy ) \in L^p(I, \mathcal{L})^*$
   (since $g \in C(I) \subset L^\infty(I, \mathcal{L})$);
   denote this functional by $\phi_g$.
   Moreover, $f \in C(I) \subset L^p(I, \mathcal{L})$.
   So by the definition of weak operator convergence
   and item (2) of \cref{basic operator facts}, we have
   \begin{align*}
      \int_{I \times I} f(x) g(y) dP_n(x,y) &= \int_{y \in I} (\widehat{P}_n f)(y) g(y) dy 
      = \phi_g( \widehat{P}_n f) 
      \to \phi_g(\widehat{P} f) 
      %&= \int_{y \in I} (\widehat{P} f)(y) g(y) dy \\
      = \int_{I \times I} f(x) g(y) dP(x,y),
   \end{align*}
   as claimed.
   
   Next, consider the set $\mathcal{A}$ of all finite sums of such functions, that is, the collection of
   all functions of the form $f_1(x) g_1(y) + \cdots + f_N(x) g_N(y) \in C(I \times I)$,
   where each $f_i, g_i \in C(I)$. 
   By linearity of integration, it quickly follows from the above that for any $h \in \mathcal{A}$ we have $\int_{I \times I} h dP_n \to \int_{I \times I} h dP$.
   It is also straightforward to check that $\mathcal{A}$ is a subalgebra
   of $C(I \times I)$ which separates points of $I \times I$
   and is closed under complex conjugation.
   Therefore, by the Stone-Weierstrass theorem, $\mathcal{A}$ is uniformly dense
   in $C(I \times I)$,
   and the rest of the argument follows by approximation.
   
%   Therefore, the remainder of the argument follows by approximation:
%   given $h \in C(I \times I)$, for any fixed $\epsilon > 0$ we can find $h_\epsilon \in \mathcal{A}$ with $\sup_{(x,y) \in I \times I} |h(x,y) - h_{\epsilon}(x,y)| \le \epsilon$.
%   We know that $\int_{I \times I} h_{\epsilon} dP_n \to \int_{I \times I} h_{\epsilon} dP$, and so
%   \begin{align*}
%      &\limsup_{n \to \infty} \left| \int_{I \times I} h dP_n - \int_{I \times I} h dP \right| \\
%      &\le \limsup_{n \to \infty} \int_{I \times I} |h - h_{\epsilon}| dP_n 
%      + \int_{I \times I} |h - h_{\epsilon}| dP
%      + \left| \int_{I \times I} h_{\epsilon} dP_n - \int_{I \times I} h_{\epsilon} dP \right| \\
%      &\le 2 \epsilon.
%   \end{align*}
%   Taking $\epsilon \to 0$ shows that $\int_{I \times I} h dP_n \to \int_{I \times I} h dP$
%   for all $h \in C(I \times I)$, that is, $P_n \to P$ weak-* in $C(I \times I)^*$, as desired.
   
   Now let us show that $(1) \Rightarrow (2)$.
   Suppose that $P_n \to P$ weak-* in $C(I \times I)^*$.
   Fix $p \in (1, \infty)$, and let $f \in L^p(I, \mathcal{L})$, $\phi \in L^p(I, \mathcal{L})^*$.
   Since $p \in (1, \infty)$, $\phi$ is necessarily of the form
   \[
      \phi(h) = \int_{y \in I} h(y) \overline{g(y)} dy
   \]
   for some $g \in L^q(I, \mathcal{L})$, where $q \in (1, \infty)$ satisfies $\frac{1}{p} + \frac{1}{q} = 1$.
   Now, given $\epsilon > 0$, choose $f_{\epsilon}, g_{\epsilon} \in C(I)$ such that
   \[
      \| f - f_{\epsilon}\|_p, \|g - g_{\epsilon}\|_q \le \epsilon.
   \]
   (Here we have used that $p, q < \infty$.)
   
   In the following, we use the notation $\langle f, g \rangle$ for $\int_{y \in I} f(y) \overline{g(y)} dy$
   for $f \in L^p(I, \mathcal{L}), g \in L^q(I, \mathcal{L})$ (even if $p, q \ne 2$).
   We have by H\"{o}lder's inequality that
   \begin{align*}
      \left| \phi( \widehat{P}_n f) - \phi( \widehat{P} f) \right|
      &\le
      |\langle \widehat{P}_n f_{\epsilon}, g_{\epsilon} \rangle - \langle \widehat{P} f_{\epsilon}, g_{\epsilon}  \rangle |\\
      &+ \left|\langle (\widehat{P}_n - 
      \widehat{P}) (f_{\epsilon} - f), g_{\epsilon} \rangle \right| + \left| \langle (\widehat{P}_n - \widehat{P})f, g - g_{\epsilon} \rangle \right| \\
      &\le \left| \int_{I \times I} f_{\epsilon}(x) g_{\epsilon}(y) dP_n(x,y) - \int_{I \times I} f_{\epsilon}(x) g_{\epsilon}(y) dP(x,y) \right| \\
      &+ \| (\widehat{P}_n - \widehat{P}) (f_{\epsilon} - f ) \|_p \| g_{\epsilon} \|_q + \| (\widehat{P}_n - \widehat{P}) f \|_p \|g - g_{\epsilon}\|_q.
   \end{align*}
   Using the fact that $\| P_n \|, \|P\| \le 1$ (where these norms are the operator norm on $\mathcal{B}(L^p(I, \mathcal{L}))$),
   and using the approximation properties of $f_{\epsilon}, g_{\epsilon}$, we have
   \[
      \| (\widehat{P}_n - \widehat{P}) (f_{\epsilon} - f ) \|_p \| g_{\epsilon} \|_q + \| (\widehat{P}_n - \widehat{P}) f \|_p \|g - g_{\epsilon}\|_q
      \le 2 \epsilon (\|g\|_q + \epsilon) + 2 \|f\|_p \epsilon.
   \]
   Using this and the fact that $f_{\epsilon}(x)g_{\epsilon}(y) \in C(I \times I)$ and $P_n \to P$ weak-*, taking $\limsup_{n \to \infty}$ above gives
   \[
      \limsup_{n \to \infty} |\phi(\widehat{P}_n f) - \phi(\widehat{P} f)| \le 2\epsilon [ \|g\|_q + \epsilon + \|f\|_p].
   \]
   Taking $\epsilon \to 0$ then shows that $\phi(\widehat{P}_n f) \to \phi(\widehat{P} f)$. So we have shown that
   $\widehat{P}_n \to \widehat{P}$ in the weak operator topology on $\mathcal{B}(L^p(I, \mathcal{L})$.

    (3) $\Leftrightarrow$ (4):
    Note that for any $P \in \Perm$ we have $P(A \times B) = \pair{\widehat{P}\ind_A}{\ind_B}$ by item (2)
    of \cref{basic operator facts}.
    Hence, weak operator convergence of $\widehat{P}_N$ to $\widehat{P}$ implies $P_n(A \times B) \to P(A \times B)$
    for all Borel $A,B \subset I$. The other direction is a standard approximation argument.
\end{proof}

\end{document}